\documentclass[11pt]{article}
\usepackage{amssymb}
\usepackage{amsmath}
\usepackage{amsfonts}

\usepackage[body=16cm,top=4cm,bottom=4cm]{geometry}
\usepackage[english]{babel}

\setcounter{MaxMatrixCols}{10}

\newtheorem{theorem}{Theorem}

\newtheorem{axiom}[theorem]{Axiom}

\newtheorem{conjecture}[theorem]{Conjecture}
\newtheorem{corollary}[theorem]{Corollary}

\newtheorem{definition}[theorem]{Definition}
\newtheorem{example}[theorem]{Example}
\newtheorem{exercise}[theorem]{Exercise}
\newtheorem{lemma}[theorem]{Lemma}

\newtheorem{proposition}[theorem]{Proposition}
\newtheorem{remark}[theorem]{Remark}

\newenvironment{proof}[1][Proof]{\noindent\textbf{#1.} }{\ \rule{0.5em}{0.5em}}
\typeout{TCILATEX Macros for Scientific Word 3.0 <19 May 1997>.}
\typeout{NOTICE:  This macro file is NOT proprietary and may be 
freely copied and distributed.}
\makeatletter
%
\newcount\@hour\newcount\@minute\chardef\@x10\chardef\@xv60
\def\tcitime{
\def\@time{%
  \@minute\time\@hour\@minute\divide\@hour\@xv
  \ifnum\@hour<\@x 0\fi\the\@hour:%
  \multiply\@hour\@xv\advance\@minute-\@hour
  \ifnum\@minute<\@x 0\fi\the\@minute
  }}%

\@ifundefined{hyperref}{}{}

\@ifundefined{qExtProgCall}{\def\qExtProgCall#1#2#3#4#5#6{\relax}}{}
%
%
%
%
\def\QCTOpt[#1]#2{%
  \def\QCTOptB{#1}
  \def\QCTOptA{#2}
}
\def\QCTNOpt#1{%
  \def\QCTOptA{#1}
  \let\QCTOptB\empty
}
\def\Qct{%
  \@ifnextchar[{%
    \QCTOpt}{\QCTNOpt}
}
\def\QCBOpt[#1]#2{%
  \def\QCBOptB{#1}
  \def\QCBOptA{#2}
}
\def\QCBNOpt#1{%
  \def\QCBOptA{#1}
  \let\QCBOptB\empty
}
\def\Qcb{%
  \@ifnextchar[{%
    \QCBOpt}{\QCBNOpt}
}
\def\PrepCapArgs{%
  \ifx\QCBOptA\empty
    \ifx\QCTOptA\empty
      {}%
    \else
      \ifx\QCTOptB\empty
        {\QCTOptA}%
      \else
        [\QCTOptB]{\QCTOptA}%
      \fi
    \fi
  \else
    \ifx\QCBOptA\empty
      {}%
    \else
      \ifx\QCBOptB\empty
        {\QCBOptA}%
      \else
        [\QCBOptB]{\QCBOptA}%
      \fi
    \fi
  \fi
}
\newcount\GRAPHICSTYPE
\GRAPHICSTYPE=\z@
\def\GRAPHICSPS#1{%
 \ifcase\GRAPHICSTYPE
   \special{ps: #1}%
 \or
   \special{language "PS", include "#1"}%
 \fi
}%
%
%
%
\def\graffile#1#2#3#4{%
    \bgroup
    \leavevmode
    \@ifundefined{bbl@deactivate}{\def~{\string~}}{\activesoff}
    \raise -#4 \BOXTHEFRAME{%
        \hbox to #2{\raise #3\hbox to #2{\null #1\hfil}}}%
    \egroup
}%
%
\def\draftbox#1#2#3#4{%
 \leavevmode\raise -#4 \hbox{%
  \frame{\rlap{\protect\tiny #1}\hbox to #2%
   {\vrule height#3 width\z@ depth\z@\hfil}%
  }%
 }%
}%
\newcount\draft
\draft=\z@

\newif\ifwasdraft
\wasdraftfalse

\def\GRAPHIC#1#2#3#4#5{%
 \ifnum\draft=\@ne\draftbox{#2}{#3}{#4}{#5}%
  \else\graffile{#1}{#3}{#4}{#5}%
  \fi
 }%
\def\addtoLaTeXparams#1{%
    \edef\LaTeXparams{\LaTeXparams #1}}%
%

\newif\ifBoxFrame \BoxFramefalse
\newif\ifOverFrame \OverFramefalse
\newif\ifUnderFrame \UnderFramefalse

\def\BOXTHEFRAME#1{%
   \hbox{%
      \ifBoxFrame
         \frame{#1}%
      \else
         {#1}%
      \fi
   }%
}

\def\doFRAMEparams#1{\BoxFramefalse\OverFramefalse\UnderFramefalse\readFRAMEparams#1\end}%
\def\readFRAMEparams#1{%
 \ifx#1\end%
  \let\next=\relax
  \else
  \ifx#1i\dispkind=\z@\fi
  \ifx#1d\dispkind=\@ne\fi
  \ifx#1f\dispkind=\tw@\fi
  \ifx#1t\addtoLaTeXparams{t}\fi
  \ifx#1b\addtoLaTeXparams{b}\fi
  \ifx#1p\addtoLaTeXparams{p}\fi
  \ifx#1h\addtoLaTeXparams{h}\fi
  \ifx#1X\BoxFrametrue\fi
  \ifx#1O\OverFrametrue\fi
  \ifx#1U\UnderFrametrue\fi
  \ifx#1w
    \ifnum\draft=1\wasdrafttrue\else\wasdraftfalse\fi
    \draft=\@ne
  \fi
  \let\next=\readFRAMEparams
  \fi
 \next
 }%
%

\def\IFRAME#1#2#3#4#5#6{%
      \bgroup
      \let\QCTOptA\empty
      \let\QCTOptB\empty
      \let\QCBOptA\empty
      \let\QCBOptB\empty
      #6%
      \parindent=0pt%
      \leftskip=0pt
      \rightskip=0pt
      \setbox0 = \hbox{\QCBOptA}%
      \@tempdima = #1\relax
      \ifOverFrame
          \typeout{This is not implemented yet}%
          \show\HELP
      \else
         \ifdim\wd0>\@tempdima
            \advance\@tempdima by \@tempdima
            \ifdim\wd0 >\@tempdima
               \textwidth=\@tempdima
               \setbox1 =\vbox{%
                  \noindent\hbox to \@tempdima{\hfill\GRAPHIC{#5}{#4}{#1}{#2}{#3}\hfill}\\%
                  \noindent\hbox to \@tempdima{\parbox[b]{\@tempdima}{\QCBOptA}}%
               }%
               \wd1=\@tempdima
            \else
               \textwidth=\wd0
               \setbox1 =\vbox{%
                 \noindent\hbox to \wd0{\hfill\GRAPHIC{#5}{#4}{#1}{#2}{#3}\hfill}\\%
                 \noindent\hbox{\QCBOptA}%
               }%
               \wd1=\wd0
            \fi
         \else
            \ifdim\wd0>0pt
              \hsize=\@tempdima
              \setbox1 =\vbox{%
                \unskip\GRAPHIC{#5}{#4}{#1}{#2}{0pt}%
                \break
                \unskip\hbox to \@tempdima{\hfill \QCBOptA\hfill}%
              }%
              \wd1=\@tempdima
           \else
              \hsize=\@tempdima
              \setbox1 =\vbox{%
                \unskip\GRAPHIC{#5}{#4}{#1}{#2}{0pt}%
              }%
              \wd1=\@tempdima
           \fi
         \fi
         \@tempdimb=\ht1
         \advance\@tempdimb by \dp1
         \advance\@tempdimb by -#2%
         \advance\@tempdimb by #3%
         \leavevmode
         \raise -\@tempdimb \hbox{\box1}%
      \fi
      \egroup%
}%
%
\def\DFRAME#1#2#3#4#5{%
 \begin{center}
     \let\QCTOptA\empty
     \let\QCTOptB\empty
     \let\QCBOptA\empty
     \let\QCBOptB\empty
     \ifOverFrame 
        #5\QCTOptA\par
     \fi
     \GRAPHIC{#4}{#3}{#1}{#2}{\z@}
     \ifUnderFrame 
        \nobreak\par\nobreak#5\QCBOptA
     \fi
 \end{center}%
 }%
%
\def\FFRAME#1#2#3#4#5#6#7{%
 \begin{figure}[#1]%
  \let\QCTOptA\empty
  \let\QCTOptB\empty
  \let\QCBOptA\empty
  \let\QCBOptB\empty
  \ifOverFrame
    #4
    \ifx\QCTOptA\empty
    \else
      \ifx\QCTOptB\empty
        \caption{\QCTOptA}%
      \else
        \caption[\QCTOptB]{\QCTOptA}%
      \fi
    \fi
    \ifUnderFrame\else
      \label{#5}%
    \fi
  \else
    \UnderFrametrue%
  \fi
  \begin{center}\GRAPHIC{#7}{#6}{#2}{#3}{\z@}\end{center}%
  \ifUnderFrame
    #4
    \ifx\QCBOptA\empty
      \caption{}%
    \else
      \ifx\QCBOptB\empty
        \caption{\QCBOptA}%
      \else
        \caption[\QCBOptB]{\QCBOptA}%
      \fi
    \fi
    \label{#5}%
  \fi
  \end{figure}%
 }%
%
%
%
%
%
\newcount\dispkind%

\def\makeactives{
  \catcode`\"=\active
  \catcode`\;=\active
  \catcode`\:=\active
  \catcode`\'=\active
  \catcode`\~=\active
}
\bgroup
   \makeactives
   \gdef\activesoff{%
      \def"{\string"}
      \def;{\string;}
      \def:{\string:}
      \def'{\string'}
      \def~{\string~}
    }
\egroup

\def\FRAME#1#2#3#4#5#6#7#8{%
 \bgroup
 \ifnum\draft=\@ne
   \wasdrafttrue
 \else
   \wasdraftfalse%
 \fi
 \def\LaTeXparams{}%
 \dispkind=\z@
 \def\LaTeXparams{}%
 \doFRAMEparams{#1}%
 \ifnum\dispkind=\z@\IFRAME{#2}{#3}{#4}{#7}{#8}{#5}\else
  \ifnum\dispkind=\@ne\DFRAME{#2}{#3}{#7}{#8}{#5}\else
   \ifnum\dispkind=\tw@
    \edef\@tempa{\noexpand\FFRAME{\LaTeXparams}}%
    \@tempa{#2}{#3}{#5}{#6}{#7}{#8}%
    \fi
   \fi
  \fi
  \ifwasdraft\draft=1\else\draft=0\fi{}%
  \egroup
 }%
%

\def\TEXUX#1{"texux"}

%
%
%
%
%
%
%
%
%
%

%
\long\def\QQQ#1#2{%
     \long\expandafter\def\csname#1\endcsname{#2}}%
\@ifundefined{QTP}{\def\QTP#1{}}{}
\@ifundefined{QEXCLUDE}{\def\QEXCLUDE#1{}}{}
\@ifundefined{Qlb}{}{}
\@ifundefined{Qlt}{}{}
\long\def\QQA#1#2{}%
\def\QTR#1#2{{\csname#1\endcsname #2}}
\def\EXPAND#1[#2]#3{}%
\def\NOEXPAND#1[#2]#3{}%
\def\LaTeXparent#1{}%
\def\ChildStyles#1{}%
\def\ChildDefaults#1{}%
\def\QTagDef#1#2#3{}%

\@ifundefined{correctchoice}{}{}
\@ifundefined{HTML}{\def\HTML#1{\relax}}{}
\@ifundefined{TCIIcon}{\def\TCIIcon#1#2#3#4{\relax}}{}
\if@compatibility
  \typeout{Not defining UNICODE or CustomNote commands for LaTeX 2.09.}
\else
  \providecommand{\UNICODE}[2][]{}
  
\fi

%
\@ifundefined{StyleEditBeginDoc}{}{}
%
\def\QQfnmark#1{\footnotemark}

%
%
\@ifundefined{TCIMAKEINDEX}{}{\makeindex}%
%
\@ifundefined{abstract}{%
 \def\abstract{%
  \if@twocolumn
   \section*{Abstract (Not appropriate in this style!)}%
   \else \small 
   \begin{center}{\bf Abstract\vspace{-.5em}\vspace{\z@}}\end{center}%
   \quotation 
   \fi
  }%
 }{%
 }%
\@ifundefined{endabstract}{\def\endabstract
  {\if@twocolumn\else\endquotation\fi}}{}%
\@ifundefined{maketitle}{\def\maketitle#1{}}{}%
\@ifundefined{affiliation}{\def\affiliation#1{}}{}%
\@ifundefined{proof}{}{}%
\@ifundefined{endproof}{}{}%
\@ifundefined{newfield}{\def\newfield#1#2{}}{}%
\@ifundefined{chapter}{\def\chapter#1{\par(Chapter head:)#1\par }%
 \newcount\c@chapter}{}%
\@ifundefined{part}{\def\part#1{\par(Part head:)#1\par }}{}%
\@ifundefined{section}{\def\section#1{\par(Section head:)#1\par }}{}%
\@ifundefined{subsection}{\def\subsection#1%
 {\par(Subsection head:)#1\par }}{}%
\@ifundefined{subsubsection}{\def\subsubsection#1%
 {\par(Subsubsection head:)#1\par }}{}%
\@ifundefined{paragraph}{\def\paragraph#1%
 {\par(Subsubsubsection head:)#1\par }}{}%
\@ifundefined{subparagraph}{\def\subparagraph#1%
 {\par(Subsubsubsubsection head:)#1\par }}{}%
\@ifundefined{therefore}{}{}%
\@ifundefined{backepsilon}{}{}%
\@ifundefined{yen}{}{}%
\@ifundefined{registered}{%
   \def\registered{\relax\ifmmode{}\r@gistered
                    \else$\m@th\r@gistered$\fi}%
 \def\r@gistered{^{\ooalign
  {\hfil\raise.07ex\hbox{$\scriptstyle\rm\text{R}$}\hfil\crcr
  \mathhexbox20D}}}}{}%
\@ifundefined{Eth}{}{}%
\@ifundefined{eth}{}{}%
\@ifundefined{Thorn}{}{}%
\@ifundefined{thorn}{}{}%
%
\@ifundefined{degree}{}{}%
%
\newdimen\theight
\def\Column{%
 \vadjust{\setbox\z@=\hbox{\scriptsize\quad\quad tcol}%
  \theight=\ht\z@\advance\theight by \dp\z@\advance\theight by \lineskip
  \kern -\theight \vbox to \theight{%
   \rightline{\rlap{\box\z@}}%
   \vss
   }%
  }%
 }%
\def\qed{%
 \ifhmode\unskip\nobreak\fi\ifmmode\ifinner\else\hskip5\p@\fi\fi
 \hbox{\hskip5\p@\vrule width4\p@ height6\p@ depth1.5\p@\hskip\p@}%
 }%
\def\miss{\hbox{\vrule height2\p@ width 2\p@ depth\z@}}%
%
%
\def\tcol#1{{\baselineskip=6\p@ \vcenter{#1}} \Column}  %
%
%
\@ifundefined{note}{}{}%

\def\newfmtname{LaTeX2e}

\ifx\fmtname\newfmtname
  \DeclareOldFontCommand{\rm}{\normalfont\rmfamily}{\mathrm}
  \DeclareOldFontCommand{\sf}{\normalfont\sffamily}{\mathsf}
  \DeclareOldFontCommand{\tt}{\normalfont\ttfamily}{\mathtt}
  \DeclareOldFontCommand{\bf}{\normalfont\bfseries}{\mathbf}
  \DeclareOldFontCommand{\it}{\normalfont\itshape}{\mathit}
  \DeclareOldFontCommand{\sl}{\normalfont\slshape}{\@nomath\sl}
  \DeclareOldFontCommand{\sc}{\normalfont\scshape}{\@nomath\sc}
\fi

%

\def\alpha{{\Greekmath 010B}}%
\def\beta{{\Greekmath 010C}}%
\def\gamma{{\Greekmath 010D}}%
\def\delta{{\Greekmath 010E}}%
\def\epsilon{{\Greekmath 010F}}%
\def\zeta{{\Greekmath 0110}}%
\def\eta{{\Greekmath 0111}}%
\def\theta{{\Greekmath 0112}}%
\def\iota{{\Greekmath 0113}}%
\def\kappa{{\Greekmath 0114}}%
\def\lambda{{\Greekmath 0115}}%
\def\mu{{\Greekmath 0116}}%
\def\nu{{\Greekmath 0117}}%
\def\xi{{\Greekmath 0118}}%
\def\pi{{\Greekmath 0119}}%
\def\rho{{\Greekmath 011A}}%
\def\sigma{{\Greekmath 011B}}%
\def\tau{{\Greekmath 011C}}%
\def\upsilon{{\Greekmath 011D}}%
\def\phi{{\Greekmath 011E}}%
\def\chi{{\Greekmath 011F}}%
\def\psi{{\Greekmath 0120}}%
\def\omega{{\Greekmath 0121}}%
\def\varepsilon{{\Greekmath 0122}}%
\def\vartheta{{\Greekmath 0123}}%
\def\varpi{{\Greekmath 0124}}%
\def\varrho{{\Greekmath 0125}}%
\def\varsigma{{\Greekmath 0126}}%
\def\varphi{{\Greekmath 0127}}%

\def\nabla{{\Greekmath 0272}}
\def\FindBoldGroup{%
   {\setbox0=\hbox{$\mathbf{x\global\edef\theboldgroup{\the\mathgroup}}$}}%
}

\def\Greekmath#1#2#3#4{%
    \if@compatibility
        \ifnum\mathgroup=\symbold
           \mathchoice{\mbox{\boldmath$\displaystyle\mathchar"#1#2#3#4$}}%
                      {\mbox{\boldmath$\textstyle\mathchar"#1#2#3#4$}}%
                      {\mbox{\boldmath$\scriptstyle\mathchar"#1#2#3#4$}}%
                      {\mbox{\boldmath$\scriptscriptstyle\mathchar"#1#2#3#4$}}%
        \else
           \mathchar"#1#2#3#4%
        \fi 
    \else 
        \FindBoldGroup
        \ifnum\mathgroup=\theboldgroup 
           \mathchoice{\mbox{\boldmath$\displaystyle\mathchar"#1#2#3#4$}}%
                      {\mbox{\boldmath$\textstyle\mathchar"#1#2#3#4$}}%
                      {\mbox{\boldmath$\scriptstyle\mathchar"#1#2#3#4$}}%
                      {\mbox{\boldmath$\scriptscriptstyle\mathchar"#1#2#3#4$}}%
        \else
           \mathchar"#1#2#3#4%
        \fi     	    
	  \fi}

\newif\ifGreekBold  \GreekBoldfalse
\let\SAVEPBF=\pbf
\def\pbf{\GreekBoldtrue\SAVEPBF}%

\@ifundefined{theorem}{\newtheorem{theorem}{Theorem}}{}
\@ifundefined{lemma}{\newtheorem{lemma}[theorem]{Lemma}}{}
\@ifundefined{corollary}{}{}
\@ifundefined{conjecture}{}{}
\@ifundefined{proposition}{\newtheorem{proposition}[theorem]{Proposition}}{}
\@ifundefined{axiom}{}{}
\@ifundefined{remark}{\newtheorem{remark}{Remark}}{}
\@ifundefined{example}{}{}
\@ifundefined{exercise}{}{}
\@ifundefined{definition}{\newtheorem{definition}{Definition}}{}

\@ifundefined{mathletters}{%
  \newcounter{equationnumber}  
  \def\mathletters{%
     \addtocounter{equation}{1}
     \edef\@currentlabel{\theequation}%
     \setcounter{equationnumber}{\c@equation}
     \setcounter{equation}{0}%
     \edef\theequation{\@currentlabel\noexpand\alph{equation}}%
  }
  
}{}

\@ifundefined{BibTeX}{%
    \def\BibTeX{{\rm B\kern-.05em{\sc i\kern-.025em b}\kern-.08em
                 T\kern-.1667em\lower.7ex\hbox{E}\kern-.125emX}}}{}%
\@ifundefined{AmS}%
    {\def\AmS{{\protect\usefont{OMS}{cmsy}{m}{n}%
                A\kern-.1667em\lower.5ex\hbox{M}\kern-.125emS}}}{}%
\@ifundefined{AmSTeX}{}{}%
%

\def\@@eqncr{\let\@tempa\relax
    \ifcase\@eqcnt \def\@tempa{& & &}\or \def\@tempa{& &}%
      \else \def\@tempa{&}\fi
     \@tempa
     \if@eqnsw
        \iftag@
           \@taggnum
        \else
           \@eqnnum\stepcounter{equation}%
        \fi
     \fi
     \global\tag@false
     \global\@eqnswtrue
     \global\@eqcnt\z@\cr}

\def\TCItag{\@ifnextchar*{\@TCItagstar}{\@TCItag}}
\def\@TCItag#1{%
    \global\tag@true
    \global\def\@taggnum{(#1)}}
\def\@TCItagstar*#1{%
    \global\tag@true
    \global\def\@taggnum{#1}}
%
%
%
%
%
%
%
%
%
%
%
%
%
%
%
%
%
%
%
%
%
%
%
%
%
%
%
%
%
%
%
%
%
%
%
%
%
%
%
%
%
%
%
%
%
%
%
%
%
%
%
%
\def\dprod{\mathop{\displaystyle \prod }}%
\def\dbigcap{\mathop{\displaystyle \bigcap }}%
%
%
%
%
%
%
%
%
%

%
%
\ifx\ds@amstex\relax
   \message{amstex already loaded}\makeatother 
\else
   \@ifpackageloaded{amsmath}%
      {\message{amsmath already loaded}\makeatother }
      {}
   \@ifpackageloaded{amstex}%
      {\message{amstex already loaded}\makeatother }
      {}
   \@ifpackageloaded{amsgen}%
      {\message{amsgen already loaded}\makeatother }
      {}
\fi
%
%
%
%
\let\DOTSI\relax
\def\RIfM@{\relax\ifmmode}%
\def\FN@{\futurelet\next}%
\newcount\intno@
\def\iint{\DOTSI\intno@\tw@\FN@\ints@}%
\def\iiint{\DOTSI\intno@\thr@@\FN@\ints@}%
\def\iiiint{\DOTSI\intno@4 \FN@\ints@}%
\def\idotsint{\DOTSI\intno@\z@\FN@\ints@}%
\def\ints@{\findlimits@\ints@@}%
\newif\iflimtoken@
\newif\iflimits@
\def\findlimits@{\limtoken@true\ifx\next\limits\limits@true
 \else\ifx\next\nolimits\limits@false\else
 \limtoken@false\ifx\ilimits@\nolimits\limits@false\else
 \ifinner\limits@false\else\limits@true\fi\fi\fi\fi}%
\def\multint@{\int\ifnum\intno@=\z@\intdots@                          
 \else\intkern@\fi                                                    
 \ifnum\intno@>\tw@\int\intkern@\fi                                   
 \ifnum\intno@>\thr@@\int\intkern@\fi                                 
 \int}
\def\multintlimits@{\intop\ifnum\intno@=\z@\intdots@\else\intkern@\fi
 \ifnum\intno@>\tw@\intop\intkern@\fi
 \ifnum\intno@>\thr@@\intop\intkern@\fi\intop}%
\def\intic@{%
    \mathchoice{\hskip.5em}{\hskip.4em}{\hskip.4em}{\hskip.4em}}%
\def\negintic@{\mathchoice
 {\hskip-.5em}{\hskip-.4em}{\hskip-.4em}{\hskip-.4em}}%
\def\ints@@{\iflimtoken@                                              
 \def\ints@@@{\iflimits@\negintic@
   \mathop{\intic@\multintlimits@}\limits                             
  \else\multint@\nolimits\fi                                          
  \eat@}
 \else                                                                
 \def\ints@@@{\iflimits@\negintic@
  \mathop{\intic@\multintlimits@}\limits\else
  \multint@\nolimits\fi}\fi\ints@@@}%
\def\intkern@{\mathchoice{\!\!\!}{\!\!}{\!\!}{\!\!}}%
\def\plaincdots@{\mathinner{\cdotp\cdotp\cdotp}}%
\def\intdots@{\mathchoice{\plaincdots@}%
 {{\cdotp}\mkern1.5mu{\cdotp}\mkern1.5mu{\cdotp}}%
 {{\cdotp}\mkern1mu{\cdotp}\mkern1mu{\cdotp}}%
 {{\cdotp}\mkern1mu{\cdotp}\mkern1mu{\cdotp}}}%
%
%
%
\def\RIfM@{\relax\protect\ifmmode}
\def\text{\RIfM@\expandafter\text@\else\expandafter\mbox\fi}
\let\nfss@text\text
\def\text@#1{\mathchoice
   {\textdef@\displaystyle\f@size{#1}}%
   {\textdef@\textstyle\tf@size{\firstchoice@false #1}}%
   {\textdef@\textstyle\sf@size{\firstchoice@false #1}}%
   {\textdef@\textstyle \ssf@size{\firstchoice@false #1}}%
   \glb@settings}

\def\textdef@#1#2#3{\hbox{{%
                    \everymath{#1}%
                    \let\f@size#2\selectfont
                    #3}}}
\newif\iffirstchoice@
\firstchoice@true
%
%
\def\Let@{\relax\iffalse{\fi\let\\=\cr\iffalse}\fi}%
\def\vspace@{\def\vspace##1{\crcr\noalign{\vskip##1\relax}}}%
\def\multilimits@{\bgroup\vspace@\Let@
 \baselineskip\fontdimen10 \scriptfont\tw@
 \advance\baselineskip\fontdimen12 \scriptfont\tw@
 \lineskip\thr@@\fontdimen8 \scriptfont\thr@@
 \lineskiplimit\lineskip
 \vbox\bgroup\ialign\bgroup\hfil$\m@th\scriptstyle{##}$\hfil\crcr}%
\def\Sb{_\multilimits@}%
\def\endSb{\crcr\egroup\egroup\egroup}%
\def\Sp{^\multilimits@}%

%
%
%
\newdimen\ex@
\ex@.2326ex
\def\rightarrowfill@#1{$#1\m@th\mathord-\mkern-6mu\cleaders
 \hbox{$#1\mkern-2mu\mathord-\mkern-2mu$}\hfill
 \mkern-6mu\mathord\rightarrow$}%
\def\leftarrowfill@#1{$#1\m@th\mathord\leftarrow\mkern-6mu\cleaders
 \hbox{$#1\mkern-2mu\mathord-\mkern-2mu$}\hfill\mkern-6mu\mathord-$}%
\def\leftrightarrowfill@#1{$#1\m@th\mathord\leftarrow
\mkern-6mu\cleaders
 \hbox{$#1\mkern-2mu\mathord-\mkern-2mu$}\hfill
 \mkern-6mu\mathord\rightarrow$}%
\def\overrightarrow{\mathpalette\overrightarrow@}%
\def\overrightarrow@#1#2{\vbox{\ialign{##\crcr\rightarrowfill@#1\crcr
 \noalign{\kern-\ex@\nointerlineskip}$\m@th\hfil#1#2\hfil$\crcr}}}%

\def\overleftarrow{\mathpalette\overleftarrow@}%
\def\overleftarrow@#1#2{\vbox{\ialign{##\crcr\leftarrowfill@#1\crcr
 \noalign{\kern-\ex@\nointerlineskip}$\m@th\hfil#1#2\hfil$\crcr}}}%
\def\overleftrightarrow{\mathpalette\overleftrightarrow@}%
\def\overleftrightarrow@#1#2{\vbox{\ialign{##\crcr
   \leftrightarrowfill@#1\crcr
 \noalign{\kern-\ex@\nointerlineskip}$\m@th\hfil#1#2\hfil$\crcr}}}%
\def\underrightarrow{\mathpalette\underrightarrow@}%
\def\underrightarrow@#1#2{\vtop{\ialign{##\crcr$\m@th\hfil#1#2\hfil
  $\crcr\noalign{\nointerlineskip}\rightarrowfill@#1\crcr}}}%

\def\underleftarrow{\mathpalette\underleftarrow@}%
\def\underleftarrow@#1#2{\vtop{\ialign{##\crcr$\m@th\hfil#1#2\hfil
  $\crcr\noalign{\nointerlineskip}\leftarrowfill@#1\crcr}}}%
\def\underleftrightarrow{\mathpalette\underleftrightarrow@}%
\def\underleftrightarrow@#1#2{\vtop{\ialign{##\crcr$\m@th
  \hfil#1#2\hfil$\crcr
 \noalign{\nointerlineskip}\leftrightarrowfill@#1\crcr}}}%

\def\qopnamewl@#1{\mathop{\operator@font#1}\nlimits@}
\let\nlimits@\displaylimits
\def\setboxz@h{\setbox\z@\hbox}

\def\varlim@#1#2{\mathop{\vtop{\ialign{##\crcr
 \hfil$#1\m@th\operator@font lim$\hfil\crcr
 \noalign{\nointerlineskip}#2#1\crcr
 \noalign{\nointerlineskip\kern-\ex@}\crcr}}}}

 \def\rightarrowfill@#1{\m@th\setboxz@h{$#1-$}\ht\z@\z@
  $#1\copy\z@\mkern-6mu\cleaders
  \hbox{$#1\mkern-2mu\box\z@\mkern-2mu$}\hfill
  \mkern-6mu\mathord\rightarrow$}
\def\leftarrowfill@#1{\m@th\setboxz@h{$#1-$}\ht\z@\z@
  $#1\mathord\leftarrow\mkern-6mu\cleaders
  \hbox{$#1\mkern-2mu\copy\z@\mkern-2mu$}\hfill
  \mkern-6mu\box\z@$}

\def\projlim{\qopnamewl@{proj\,lim}}
\def\injlim{\qopnamewl@{inj\,lim}}
\def\varinjlim{\mathpalette\varlim@\rightarrowfill@}
\def\varprojlim{\mathpalette\varlim@\leftarrowfill@}
\def\varliminf{\mathpalette\varliminf@{}}
\def\varliminf@#1{\mathop{\underline{\vrule\@depth.2\ex@\@width\z@
   \hbox{$#1\m@th\operator@font lim$}}}}
\def\varlimsup{\mathpalette\varlimsup@{}}
\def\varlimsup@#1{\mathop{\overline
  {\hbox{$#1\m@th\operator@font lim$}}}}

%
%
%
%
%
%
\begingroup \catcode `|=0 \catcode `[= 1
\catcode`]=2 \catcode `\{=12 \catcode `\}=12
\catcode`\\=12 
|gdef|@alignverbatim#1\end{align}[#1|end[align]]
|gdef|@salignverbatim#1\end{align*}[#1|end[align*]]

|gdef|@alignatverbatim#1\end{alignat}[#1|end[alignat]]
|gdef|@salignatverbatim#1\end{alignat*}[#1|end[alignat*]]

|gdef|@xalignatverbatim#1\end{xalignat}[#1|end[xalignat]]
|gdef|@sxalignatverbatim#1\end{xalignat*}[#1|end[xalignat*]]

|gdef|@gatherverbatim#1\end{gather}[#1|end[gather]]
|gdef|@sgatherverbatim#1\end{gather*}[#1|end[gather*]]

|gdef|@gatherverbatim#1\end{gather}[#1|end[gather]]
|gdef|@sgatherverbatim#1\end{gather*}[#1|end[gather*]]

|gdef|@multilineverbatim#1\end{multiline}[#1|end[multiline]]
|gdef|@smultilineverbatim#1\end{multiline*}[#1|end[multiline*]]

|gdef|@arraxverbatim#1\end{arrax}[#1|end[arrax]]
|gdef|@sarraxverbatim#1\end{arrax*}[#1|end[arrax*]]

|gdef|@tabulaxverbatim#1\end{tabulax}[#1|end[tabulax]]
|gdef|@stabulaxverbatim#1\end{tabulax*}[#1|end[tabulax*]]

|endgroup

\def\align{\@verbatim \frenchspacing\@vobeyspaces \@alignverbatim
You are using the "align" environment in a style in which it is not defined.}

\@namedef{align*}{\@verbatim\@salignverbatim
You are using the "align*" environment in a style in which it is not defined.}
\expandafter\let\csname endalign*\endcsname =\endtrivlist

\def\alignat{\@verbatim \frenchspacing\@vobeyspaces \@alignatverbatim
You are using the "alignat" environment in a style in which it is not defined.}

\@namedef{alignat*}{\@verbatim\@salignatverbatim
You are using the "alignat*" environment in a style in which it is not defined.}
\expandafter\let\csname endalignat*\endcsname =\endtrivlist

\def\xalignat{\@verbatim \frenchspacing\@vobeyspaces \@xalignatverbatim
You are using the "xalignat" environment in a style in which it is not defined.}

\@namedef{xalignat*}{\@verbatim\@sxalignatverbatim
You are using the "xalignat*" environment in a style in which it is not defined.}
\expandafter\let\csname endxalignat*\endcsname =\endtrivlist

\def\gather{\@verbatim \frenchspacing\@vobeyspaces \@gatherverbatim
You are using the "gather" environment in a style in which it is not defined.}

\@namedef{gather*}{\@verbatim\@sgatherverbatim
You are using the "gather*" environment in a style in which it is not defined.}
\expandafter\let\csname endgather*\endcsname =\endtrivlist

\def\multiline{\@verbatim \frenchspacing\@vobeyspaces \@multilineverbatim
You are using the "multiline" environment in a style in which it is not defined.}

\@namedef{multiline*}{\@verbatim\@smultilineverbatim
You are using the "multiline*" environment in a style in which it is not defined.}
\expandafter\let\csname endmultiline*\endcsname =\endtrivlist

\def\arrax{\@verbatim \frenchspacing\@vobeyspaces \@arraxverbatim
You are using a type of "array" construct that is only allowed in AmS-LaTeX.}

\def\tabulax{\@verbatim \frenchspacing\@vobeyspaces \@tabulaxverbatim
You are using a type of "tabular" construct that is only allowed in AmS-LaTeX.}

\@namedef{arrax*}{\@verbatim\@sarraxverbatim
You are using a type of "array*" construct that is only allowed in AmS-LaTeX.}
\expandafter\let\csname endarrax*\endcsname =\endtrivlist

\@namedef{tabulax*}{\@verbatim\@stabulaxverbatim
You are using a type of "tabular*" construct that is only allowed in AmS-LaTeX.}
\expandafter\let\csname endtabulax*\endcsname =\endtrivlist


 \def\endequation{%
     \ifmmode\ifinner 
      \iftag@
        \addtocounter{equation}{-1} 
        $\hfil
           \displaywidth\linewidth\@taggnum\egroup \endtrivlist
        \global\tag@false
        \global\@ignoretrue   
      \else
        $\hfil
           \displaywidth\linewidth\@eqnnum\egroup \endtrivlist
        \global\tag@false
        \global\@ignoretrue 
      \fi
     \else   
      \iftag@
        \addtocounter{equation}{-1} 
        \eqno \hbox{\@taggnum}
        \global\tag@false%
        $$\global\@ignoretrue
      \else
        \eqno \hbox{\@eqnnum}
        $$\global\@ignoretrue
      \fi
     \fi\fi
 } 

 \newif\iftag@ \tag@false
 
 \def\TCItag{\@ifnextchar*{\@TCItagstar}{\@TCItag}}
 \def\@TCItag#1{%
     \global\tag@true
     \global\def\@taggnum{(#1)}}
 \def\@TCItagstar*#1{%
     \global\tag@true
     \global\def\@taggnum{#1}}

  \@ifundefined{tag}{
     \def\tag{\@ifnextchar*{\@tagstar}{\@tag}}
     \def\@tag#1{%
         \global\tag@true
         \global\def\@taggnum{(#1)}}
     \def\@tagstar*#1{%
         \global\tag@true
         \global\def\@taggnum{#1}}
  }{}

\makeatother

\begin{document}

\begin{center}
{\Large A Bismut-Elworthy-Li Formula for Singular SDE's Driven by a
Fractional Brownian Motion and Applications to Rough Volatility Modeling%
}

\bigskip

Oussama Amine{  \footnote{{  email: oussamaa@math.uio.no}}}$^{%
\text{,6}}$, Emmanuel Coffie{  \footnote{{  email:
emmanuco@student.matnat.uio.no}}}$^{\text{,}}${  \footnote{{  %
African Institute for Mathematical Sciences (AIMS), Ghana, Biriwa, N1,\
Accra-Cape Coast Road, Ghana.}}}, Fabian Harang{  \footnote{{  %
email: fabianah@math.uio.no}}}$^{\text{,6}}$ and Frank Proske{  
\footnote{{  email: proske@math.uio.no}}}$^{\text{,}}${  \footnote{%
{  Department of Mathematics, University of Oslo, Moltke Moes vei 35,
P.O. Box 1053 Blindern, 0316 Oslo, Norway.}}}

{\Large \bigskip }

{\large Abstract}
\end{center}

\bigskip In this paper we derive a Bismut-Elworthy-Li type formula with
respect to strong solutions to singular stochastic differential equations
(SDE's) with additive noise given by a multi-dimensional fractional Brownian
motion with Hurst parameter $H<1/2$. "Singular" here means that the drift
vector field of such equations is allowed to be merely bounded and
integrable. As an application we use this representation formula for the
study of price sensitivities of financial claims based on a stock price
model with stochastic volatility, whose dynamics is described by means of
fractional Brownian motion driven SDE's.

Our approach for obtaining these results is based on Malliavin calculus and
arguments of a recently developed "local time variational calculus".

\bigskip \emph{keywords}: Bismut-Elworthy-Li formula, singular SDEs,
fractional Brownian motion, Malliavin calculus, stochastic flows, stochastic
volatility

\emph{Mathematics Subject Classification} (2010): 60H10, 49N60, 91G80.

\bigskip

\section{Introduction}

In recent years the construction and computation of risk measures have
become an indispensable tool for the risk analysis and risk management of
portfolios in banks and insurance companies worldwide. An important class of risk
measures often applied by investors on financial markets to hedge their
positions is given by the \textquotedblright greeks\textquotedblright . These are market sensitivities usually denoted by Greek letters e.g. \textquotedblright Delta\textquotedblright , \textquotedblright
Gamma\textquotedblright , \textquotedblright Rho\textquotedblright ,
\textquotedblright Theta\textquotedblright , "Vega"..., and hence the name. For example the
Delta $\Delta $, which can be used for the construction of delta hedges in
portfolio management, measures the sensitivity of price changes of financial
derivatives with respect to the initial price of the underlying asset. Roughly
speaking, greeks \ are derivatives with respect to a parameter $\lambda $ of
a (risk-neutral) price, that is, for example of the form 
\begin{equation}
\frac{\partial }{\partial \lambda }E[\Phi ((X_{T}^{\lambda }))],  \label{1}
\end{equation}%
where $\Phi $ is the payoff function of a claim and $X_{T}^{\lambda }$ the
underlying asset at terminal time $T$, which depends on $\lambda $.

In general, greeks cannot be obtained by closed-form formulas, especially in the
case of discontinuous payoff functions. Therefore, one has to resort
to numerical techniques to approximate such sensitivities. A ground breaking
method in this direction, which is also applicable to path-dependent
options, has been developed in Fourni\'{e} et al. \cite{F1}, \cite{F2}.
Assuming that the dynamics of asset prices $X_{t}=X_{t}^{\lambda }$ is
modeled by a stochastic differential equation of the form 
\begin{equation}
dX(t)=b(t,X(t))dt+\sigma (t,X(t))dW_{t},X_{0}=x\in \mathbb{R}^{d},0\leq
t\leq T,  \label{2}
\end{equation}%
where $W_{t},0\leq t\leq T$ is a $d-$dimensional Wiener process and $b,$ $%
\sigma $ are continuously differentiable coefficients, the authors in \cite%
{F1} were able to represent (\ref{1}) in a derivative-free form,
that is by 
\begin{equation}
E[\phi (S(T))\pi ],  \label{3}
\end{equation}%
where $\pi $ is the so called Malliavin weight. Such a representation is
also referred to as Bismut-Elworthy-Li formula (BEL-formula) in the
literature.\ See \cite{BEL1} and \cite{BEL2}.

An advantage of this method is that the representation in (\ref{3}) does not
involve derivatives of $\Phi $ and that it exhibits numerical tractability
via efficient use of Monte-Carlo simulation. However, a deficiency of this
approach is the requirement that the coefficients of the SDE, which
describes the dynamics of the asset prices in (\ref{2}), are continuously
differentiable. The latter assumption is rather restrictive and excludes the
study of interesting financial models.\ Such models could e.g. pertain to a
generalization of the Black-Scholes model with "regime-switching" drift,
that is 
\begin{equation}
S_{t}^{x}=x\exp (Y_{t}),  \label{4}
\end{equation}%
where%
\begin{equation*}
dY_{t}=(b_{1}\chi _{\{Y_{t}>R\}}+b_{2}\chi _{\{Y_{t}\leq R\}})dt-\frac{1}{2}%
\sigma ^{2}dt+\sigma dB_{t}
\end{equation*}%
for constants $b_{1}$, $b_{2}$ and a "threshold" $R.$

Another possible application is to interest rate or commodity markets with a model whose dynamics is given by a generalized
Ornstein-Uhlenbeck process with regime switching mean reversion rate, that
is 
\begin{equation}
dY_{t}=(a_{1}\chi _{\{Y_{t}>R\}}+a_{2}\chi _{\{Y_{t}\leq
R\}})(b-Y_{t})dt+\sigma dW_{t}  \label{5}
\end{equation}%
for mean reversion coefficients $a_{1},a_{2}>0$, a threshold $R$, the
long-run average level $b\in \mathbb{R}$, interest rate volatility $\sigma
>0 $.

In the above models (\ref{4}) and (\ref{5}) the drift coefficients are
chosen to be discontinuous and used to capture regime-switching effects which may arise from regulations, credit rating changes, market crashes or other financial
disasters.

We mention that a BEL-representation for Wiener process driven SDE's with
merely bounded and measurable drift functions as e.g. the one in (\ref{5}) was
first obtained in Menoukeu-Pamen et al. \cite[Theorem 4.6, Remark 4.7]{MMNPZ}%
. To be more precise, for strong solutions $X_{t},0\leq t\leq T$ to SDE's
with additive Wiener noise 
\begin{equation*}
dX_{t}^{x}=b(t,X_{t}^{x})dt+dW_{t},
\end{equation*}%
where $b\in L^{\infty }([0,T]\times \mathbb{R}^{d};\mathbb{R}^{d})$, the
authors prove, for bounded Borel measurable $\Phi $ and bounded open sets $%
U\subset \mathbb{R}^{d}$, that%
\begin{equation}
\frac{\partial }{\partial x}E[\Phi (X_{T}^{x})]=E[\Phi (X_{T}^{x})\pi
]^{\ast }  \label{W}
\end{equation}%
for all $x\in U$ a.e., where the Malliavin weight $\pi $ is 
\begin{equation*}
\int_{0}^{T}a(s)\left( \frac{\partial }{\partial x}X_{s}^{x}\right) ^{\ast
}dW_{s}.
\end{equation*}%
Here the derivatives appearing on both sides of (\ref{W}) are Sobolev
derivatives on $U$, $a:[0,T]\longrightarrow \mathbb{R}$ is a bounded Borel
measurable function with $\int_{0}^{T}a(s)ds=1$ and $\ast $ denotes
transposition. See also the related articles \cite{BMDP}, \cite{BDMP}, \cite%
{XZhang} and the references therein.

Using techniques from Malliavin calculus and arguments of a "local time
variational calculus" as recently developed in the series of works \cite{BNP}%
, \cite{BLPP}, \cite{ABP} in the case of fractional Brownian motion, we aim
at obtaining in this paper an extension of the above mentioned results to
the case of fractional Brownian motion driven singular SDE's. More
precisely, we want to derive a BEL-formula of the type (\ref{W}) with
respect to strong solutions to SDE's of the form

\begin{equation}
dX_{t}=b(t,X_{t})dt+dB_{t}^{H},X_{0}=x,0\leq t\leq T,  \label{fB}
\end{equation}%
where $B_{t}^{H},0\leq t\leq T$ is a $d-$dimensional fractional Brownian
motion with Hurst parameter $H\in (0,\frac{1}{2})$ and where the vector
field $b$ is \emph{singular} in the sense that%
\begin{equation*}
b\in L_{\infty ,\infty }^{1,\infty }:=L^{1}(\mathbb{R}^{d};L^{\infty }([0,T];%
\mathbb{R}^{d}))\cap L^{\infty }(\mathbb{R}^{d};L^{\infty }([0,T];\mathbb{R}%
^{d})).
\end{equation*}%
\bigskip

As an application of the techniques used in connection with the BEL-formula,
we also wish to study a Black-Scholes model with "turbulent" stochastic
volatility, where the dynamics of stock prices is described by the
(singular) SDE%
\begin{equation*}
dX_{t}^{x}=\mu X_{t}^{x}dt+\sigma _{t}X_{t}^{x}dB_{t},X_{0}^{x}=x,0\leq
t\leq T.
\end{equation*}%
Here $B_{t},0\leq t\leq T$ is a one-dimensional Wiener process, $\mu $ the
mean return and $\sigma _{t}$ the volatility at time $t$, modeled by means
of the SDE%
\begin{equation*}
dY_{t}^{y}=b(t,Y_{t}^{y})dt+B_{t}^{H},Y_{0}^{y}=y,0\leq t\leq T
\end{equation*}%
for small Hurst parameters $H\in (0,1/2)$ and singular vector fields $b\in
L_{\infty ,\infty }^{1,\infty }$, which can be used as explained above for
the modeling of regime switching effects in stock markets. Let us also
mention that the choice of fractional Brownian motion with small Hurst
parameters $H$ in the latter model, which becomes "rougher" the lower $H$
is, is in fact supported by empirical evidence (see \cite{G}) and useful for
the description of stock price volatilities $\sigma _{t}$ in "turbulent"
stock markets.

\bigskip

Finally, we also point out the interesting work \cite{FanRen}, where the authors derived
BEL-formulas for (functional) SDE's driven by fractional Brownian motion
with Hurst parameters $H\in (0,1)$ in the case of differentiable vector
fields, which they applied to e.g. the study of Harnack type of inequalities.

\bigskip

The paper is organized as follows: In Section 2 we prove a BEL-formula with
respect to the SDE (\ref{fB}) for $H<\frac{1}{2(d+2)}$. See Theorem \ref%
{Bismut}. We then show, in Proposition \ref{ContinuousBEL}, that the
BEL-representation has a continuous version, if $H<\frac{1}{2(d+3)}$.
Finally, in\ Section 3 we discuss an application of our techniques used in
Section 2 to the sensitivity analysis of prices of options based on a
Black-Scholes model with "rough" stochastic volatility (Theorem \ref%
{Volatility}).

\section{Bismut-Elworthy-Li formula}

In this section we aim at deriving a new Bismut-Elworthy-Li type formula
with respect to SDE's driven by discontinuous vector fields and a fractional
Brownian motion with a Hurst parameter $H<\frac{1}{2}$. We also propose a
stock price model with "rough" stochastic volatility, which allows for the
description of regime switching effects with respect to volatility data
caused e.g. by economical crises, political changes or other shocks on
markets. Here, regime switching effects are modeled by means of singular
coefficients of SDE's driven by a fractional Brownian motion. On the other
hand, the "roughness" of the volatility paths in the sense of paths with low
H\"{o}lder regularity is described through the driving fractional noise of
such SDE's. Further, we also prove a BEL-representation for the delta of an
option with respect to that model.

In what follows, let us consider a fractional Brownian motion $%
B_{t}^{H},t\geq 0$ with Hurst parameter $H\in (0,1)$ on some complete
probability space $(\Omega ,\mathcal{F},\mu )$, which is (in the $1-$%
dimensional case) a centered Gaussian process with a covariance structure $%
R_{H}(t,s)$ of the form%
\begin{equation*}
R_{H}(t,s)=E[B_{t}^{H}B_{s}^{H}]=\frac{1}{2}(s^{2H}+t^{2H}-\left\vert
t-s\right\vert ^{2H})
\end{equation*}%
for all $t,s\geq 0$. See the Appendix. In the special case, when $H=\frac{1}{%
2}$ the fractional Brownian motion coincides with a Wiener process.

We also recall that the fractional Brownian motion is self-similar, that is%
\begin{equation*}
\{B_{\alpha t}^{H}\}_{t\geq 0}\overset{law}{=}\{\alpha
^{H}B_{t}^{H}\}_{t\geq 0}
\end{equation*}%
for all $\alpha >0$. Further, $B^{H}$ has a version with paths, which are $%
(H-\varepsilon )$-H\"{o}lder continuous for all $\varepsilon \in (0,H)$.
Another propertie satisfied by $B^{H}$, which actually rather complicate
the study of fractional Brownian motion, is that it is neither a Markov
process nor a semimartingale, when $H\neq \frac{1}{2}$. See e.g. \cite{Nualart}
and the references therein for further information on the fractional
Brownian motion.

In this Section, we consider for $H<\frac{1}{2}$ the SDE

\begin{equation}
dX_{t}^{x}=b(t,X_{t}^{x})dt+dB_{t}^{H},X_{0}^{x}=x,0\leq t\leq T.
\label{SDE}
\end{equation}

We mention that $B^{H}$ in this case has the representation 
\begin{equation}
B_{t}^{H}=\int_{0}^{t}K_{H}(t,s)I_{d\times d}dB_{s}  \label{RepFractional}
\end{equation}%
for a $d-$dimensional Brownian motion $B_{\cdot }$, where $I_{d\times d}\in 
\mathbb{R}^{d\times d}$ is the unit matrix and $K_{H}$ the kernel as given
in (\ref{KH}) in the Appendix.

In the sequel, we also need the following notation for function spaces:%
\begin{eqnarray*}
L_{\infty }^{1} &:&=L^{1}(\mathbb{R}^{d};L^{\infty }([0,T];\mathbb{R}^{d})),
\\
L_{\infty }^{\infty } &:&=L^{\infty }(\mathbb{R}^{d};L^{\infty }([0,T];%
\mathbb{R}^{d})), \\
L_{\infty ,\infty }^{1,\infty } &:&=L_{\infty }^{1}\cap L_{\infty }^{\infty
}.
\end{eqnarray*}

We have the following result for the existence and uniqueness of strong
solutions to the SDE (\ref{SDE}) which is due to \cite{BNP} (compare also
the results in \cite{CG} and \cite{NO}, which cannot be used to treat the
case $b\in L_{\infty ,\infty }^{1,\infty }$ for $d>1$):

\begin{theorem}
\label{StrongSolution}Let $b\in L_{\infty ,\infty }^{1,\infty }$. Then if $H<%
\frac{1}{2(d+2)}$ there exists a unique (global) strong solution $X_{\cdot
}^{x}$ of the SDE (\ref{SDE}). Moreover, for every $x\in \mathbb{R}^{d},t\in
\lbrack 0,T]$ $X_{t}^{x}$ is Malliavin differentiable in the direction of
the Brownian motion $B$ in (\ref{RepFractional}) and $X_{t}^{\cdot }$ is
locally Sobolev differentiable $\mu -a.e.$ \par
That is, more precisely, $$%
X_{t}^{\cdot }\in \dbigcap\limits_{p\geq 2}L^{2}(\Omega ;W^{1,p}(U))$$ for
bounded and open sets $U\subset \mathbb{R}^{d}$.
\end{theorem}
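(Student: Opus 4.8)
The plan is to reduce the whole problem to the underlying Brownian motion $B$ via the Volterra representation (\ref{RepFractional}), and then to build the solution by an approximation-and-compactness scheme, following the local time method of \cite{BNP}. Since $b\in L_{\infty}^{\infty}$ is bounded, adding the drift $\int_{0}^{\cdot}b(r,X_{r})\,dr$ to the fractional noise corresponds, after applying the operator $K_{H}^{-1}$, to a Girsanov density that is a genuine $\mu$-martingale; this produces a weak solution under an equivalent measure $\widetilde{\mu}$ under which $X$ is itself a fractional Brownian motion. First, then, I would record this Girsanov change of measure and the equivalence $\mu\sim\widetilde{\mu}$, which already yields existence of a weak solution and a representation of any solution as a measurable functional of the driving noise.

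Next, to obtain a \emph{strong} solution together with the claimed differentiability, I would approximate $b$ by smooth, compactly supported vector fields $b_{n}\to b$ in $L_{\infty,\infty}^{1,\infty}$. Each regularized equation has a unique smooth strong solution $X^{n,x}$ whose Malliavin derivative $D_{s}X_{t}^{n,x}$ and spatial Jacobian $\tfrac{\partial}{\partial x}X_{t}^{n,x}$ solve the usual linear variational equations with coefficient $\nabla b_{n}$. The whole difficulty is that $\nabla b_{n}$ is unbounded as $n\to\infty$, so the classical Gronwall-type bounds on these derivatives blow up; what is needed are estimates that are \emph{uniform} in $n$.

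The heart of the argument — and the step I expect to be the main obstacle — is to produce these uniform bounds by the ``local time variational calculus''. The idea is to rewrite the offending integrals $\int_{0}^{t}\nabla b_{n}(s,X_{s}^{n,x})(\cdots)\,ds$ so that the spatial derivative is transferred off $b_{n}$ by a Malliavin integration-by-parts against the Gaussian density of the shifted fractional Brownian motion. Using Girsanov to replace $X^{n}$ by a pure fBm plus a shift, one reduces matters to estimating expectations of iterated integrals of $b_{n}$ over simplices $0<s_{1}<\cdots<s_{m}<T$. Inserting the density bound $\|p_{s}\|_{\infty}\leq C\,s^{-Hd}$ and integrating over the simplex produces factorial gains in the denominator, so that the resulting series in $m$ converges; a careful accounting of the powers of $s_{i}^{-H}$ generated by each integration by parts shows that convergence holds precisely when $H<\tfrac{1}{2(d+2)}$, which is exactly the stated threshold. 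These uniform estimates give relative compactness of $\{X^{n,x}\}$ in $L^{2}(\Omega)$, and of the Malliavin and spatial derivatives in $L^{2}(\Omega)$ and $L^{2}(\Omega;L^{p}(U))$ respectively.

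Finally, I would pass to the limit along a subsequence. The equivalence of measures identifies the limit as a solution of (\ref{SDE}); closability of the Malliavin derivative together with the uniform $L^{2}$-bounds on $D_{s}X_{t}^{n,x}$ yields Malliavin differentiability of the limit, and weak compactness in the reflexive spaces $W^{1,p}(U)$ (together with lower semicontinuity of the norms) upgrades the uniform Jacobian bounds to $X_{t}^{\cdot}\in\bigcap_{p\geq2}L^{2}(\Omega;W^{1,p}(U))$. For uniqueness I would combine pathwise uniqueness, which follows from the Girsanov representation exhibiting every strong solution as the same functional of $B$, with the weak existence just obtained, via a Yamada--Watanabe argument adapted to the non-Markovian fractional setting.
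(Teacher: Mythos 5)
Your construction of the solution follows essentially the route of the paper, which for this theorem simply cites \cite{BNP}: Girsanov's theorem for fBm via $K_{H}^{-1}$ together with the Novikov-type bound for bounded drift (Theorem \ref{girsanov}, Lemma \ref{Novikov}), smooth approximations $b_{n}$, uniform bounds on $D_{s}X_{t}^{n}$ and $\frac{\partial }{\partial x}X_{t}^{n}$ obtained by reducing to iterated integrals over simplices and applying the main estimate (Theorem \ref{mainestimate2}; these are exactly Lemmas \ref{BoundedDerivatives} and \ref{BoundMalliavinDerivative}), relative compactness in $L^{2}(\Omega )$ via the Da Prato--Malliavin--Nualart criterion \cite{DMN}, and passage to the limit using closability of the Malliavin derivative and weak compactness in the reflexive spaces $L^{2}(\Omega ;W^{1,p}(U))$. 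All of this part is sound and is the intended argument.

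The genuine gap is in your uniqueness step. You assert that pathwise uniqueness ``follows from the Girsanov representation exhibiting every strong solution as the same functional of $B$'', and you then want to combine this with weak existence via Yamada--Watanabe. Both halves are problematic. Girsanov yields only law-level information: under the equivalent measure the solution is an fBm, whence uniqueness in law of the pair (solution, driving noise); it does not by itself represent an arbitrary strong solution as a functional of $B$ --- that representation is precisely what pathwise uniqueness means here, so your claim is circular. Moreover, Yamada--Watanabe runs in the opposite direction: it takes pathwise uniqueness as an \emph{input} (together with weak existence) to produce strong solutions; it cannot deliver pathwise uniqueness, and since you have already built a strong solution by compactness there is nothing left for it to do. Avoiding Yamada--Watanabe is in fact the advertised point of this circle of papers (see the title of \cite{BDMP}). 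The correct argument, as in \cite{BNP} and \cite{MMNPZ}, goes the other way around: having constructed one strong solution $X=F(B)$ as a limit of the $X^{n}$, Girsanov gives that any other strong solution $Y$ adapted to the filtration of $B$ satisfies $(Y,B)\overset{law}{=}(F(B),B)$; since the law of $(F(B),B)$ is carried by the graph of the measurable map $F$, so is the law of $(Y,B)$, and hence $Y=F(B)=X$ $\mu $-a.s. With this replacement your proof is complete and coincides with the cited one.
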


\bigskip

In preparation of our main result (Theorem \ref{Bismut}), we also need a
series of auxiliary results:

\begin{lemma}
\bigskip \label{BoundedDerivatives}Let $b\in C_{c}^{\infty }((0,T)\times 
\mathbb{R}^{d}).$ Fix integers $p\geq 2$. Then, if $H<\frac{1}{2(d+2)},$ we
have 
\begin{equation*}
\sup_{x\in \mathbb{R}^{d}}E[\left\Vert \frac{\partial }{\partial x}%
X_{t}^{x}\right\Vert ^{p}]\leq C_{p,H,d,T}(\left\Vert b\right\Vert
_{L_{\infty }^{\infty }},\left\Vert b\right\Vert _{L_{\infty }^{1}})<\infty
\end{equation*}%
for some continuous function $C_{p,H,d,T}:[0,\infty )^{2}\longrightarrow
\lbrack 0,\infty )$.
\end{lemma}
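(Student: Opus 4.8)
The plan is to exploit that for $b\in C_c^\infty((0,T)\times\mathbb{R}^d)$ the flow $x\mapsto X_t^x$ is smooth, so the Jacobian $J_t^x:=\frac{\partial}{\partial x}X_t^x$ exists and solves, $\omega$-pathwise, the linear matrix equation
\[
J_t^x=I_{d\times d}+\int_0^t Db(s,X_s^x)\,J_s^x\,ds ,
\]
where $Db$ is the spatial Jacobian of $b$. Solving this by Picard iteration gives the ordered-exponential (Peano--Baker) series
\[
J_t^x=I_{d\times d}+\sum_{m\geq 1}\int_{\Delta_m^t} Db(s_m,X_{s_m}^x)\cdots Db(s_1,X_{s_1}^x)\,ds_1\cdots ds_m ,
\]
with $\Delta_m^t=\{0<s_1<\cdots<s_m<t\}$. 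The naive bound $\|J_t^x\|\leq\exp(\int_0^t\|Db(s,X_s^x)\|\,ds)$ is useless here, since it depends on $\|Db\|_\infty$. Instead I would take $L^p(\Omega)$-norms, apply Minkowski's inequality to the series, and estimate the $L^p(\Omega)$-norm of each $m$-th term separately; the whole point of the scheme is to rewrite each such term so that only $b$, and never its derivatives, survives.

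To this end I would combine the Volterra representation $B^H=\int_0^\cdot K_H(\cdot,s)\,dB_s$ from (\ref{RepFractional}) with the ``local time variational calculus'' of \cite{BNP}, \cite{BLPP}, \cite{ABP}. A typical contribution is the $p$-th moment of an $m$-fold iterated integral, i.e. an integral over a product of simplices of $E$ of products of factors $Db(s_j,X_{s_j}^x)$. Interpreting the time integrals as a pairing of $Db$ against the occupation measure of the path and integrating by parts in the space variable transfers each derivative from $b$ onto the occupation/local-time density --- equivalently, performing Malliavin (Gaussian) integration by parts in the underlying Brownian motion $B$ transfers each $Db$ onto the conditional Gaussian densities of the increments of $B^H$ and their derivatives. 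After this step the integrand contains only $b(s_j,\cdot)$ tested against products of (derivatives of) the conditional Gaussian densities of $B^H$ given the past.

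Each resulting factor is then estimated using the explicit conditional Gaussian density bounds for $B^H$: where pointwise control is enough it is bounded by $\|b(s_j,\cdot)\|_{L^\infty}$, and where a spatial integral against a shifted Gaussian density appears it is bounded by $\|b(s_j,\cdot)\|_{L^1}$, the shift by the initial point $x$ being absorbed uniformly. This is exactly where the hypothesis $b\in L_{\infty,\infty}^{1,\infty}$ --- rather than any Sobolev norm of $b$ --- is needed, and why the final constant depends only on $\|b\|_{L_\infty^\infty}$ and $\|b\|_{L_\infty^1}$ and is uniform in $x$. The quantitative output is a bound on the $m$-th term of the form $C^m(\|b\|_{L_\infty^\infty}+\|b\|_{L_\infty^1})^m$ times an integral of a product of integrable singular kernels over $\Delta_m^t$, which evaluates to something of order $1/\Gamma(\beta m+1)$ for some $\beta=\beta(H,d)>0$.

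Summing the series is the last step: the simplex integration supplies the factorial gain, and the condition $H<\frac{1}{2(d+2)}$ is precisely what keeps $\beta$ positive enough for $\sum_m C^m(\|b\|_{L_\infty^\infty}+\|b\|_{L_\infty^1})^m/\Gamma(\beta m+1)$ to converge, yielding a finite bound depending continuously on $(\|b\|_{L_\infty^\infty},\|b\|_{L_\infty^1})$; taking $\sup_x$ preserves finiteness since every estimate was uniform in $x$. I expect the main obstacle to be the combinatorial bookkeeping of the repeated integration by parts over the simplex: one must control the proliferation of terms produced when the derivatives land on the Gaussian kernels so that the $m$-dependence stays factorially summable, while ensuring each term is expressed through $\|b\|_{L_\infty^\infty}$ and $\|b\|_{L_\infty^1}$ alone.
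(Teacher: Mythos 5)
The paper does not actually prove this lemma itself; its ``proof'' is a citation to \cite{BNP}. The underlying method, however, is displayed in full in the paper's proof of the second-order analogue, Lemma \ref{SecondBoundedDerivative}, and your proposal reproduces most of that architecture correctly: the Picard/Peano--Baker expansion of $\frac{\partial }{\partial x}X_{t}^{x}$ into iterated integrals over simplices, the rejection of the naive Gronwall-type bound, the transfer of derivatives off $b$ by Gaussian integration by parts, a per-term bound of the form $C^{m}\Vert b\Vert ^{m}/\Gamma (\beta m+1)$, and summation of the series under the stated condition on $H$.

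There is, however, a genuine gap at the central step. Your integrands are $Db(s_{j},X_{s_{j}}^{x})$, i.e.\ derivatives of $b$ evaluated along the \emph{solution}, and you claim that Malliavin/Gaussian integration by parts ``transfers each $Db$ onto the conditional Gaussian densities of the increments of $B^{H}$''. That is not available as stated: $X^{x}$ is not Gaussian, and its conditional laws given the past are not explicit Gaussian densities, so the local-time / local-nondeterminism machinery (Theorem \ref{mainestimate2}) cannot be applied to functionals of $X$ directly. The missing idea is Girsanov's theorem for fBm (Theorem \ref{girsanov}): one first changes measure so that the solution becomes $x+B^{H}$, at the price of a Dol\'{e}ans--Dade exponential factor; H\"{o}lder's inequality (with exponents chosen so that $cp=2^{q}$) separates this factor, whose moments are controlled by the Novikov-type estimate of Lemma \ref{Novikov} --- and this is precisely where the dependence on $\Vert b\Vert _{L_{\infty }^{\infty }}$ enters, not (as you suggest) from pointwise bounds of $b$ against Gaussian densities. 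Only after this reduction does one face expectations of iterated integrals of products of $Db(s_{j},x+B_{s_{j}}^{H})$, and to estimate their $L^{2^{q}}(\Omega )$ norms these must first be reassembled, via the shuffle lemma (Lemma \ref{partialshuffle}) applied successively, into single simplex integrals of length $2^{q}m$ before Theorem \ref{mainestimate2} applies; this shuffle step, which you fold into ``combinatorial bookkeeping'', is exactly what keeps the number of terms of order $K^{m}$ and preserves the factorial gain. Without the Girsanov reduction the scheme as you describe it does not get off the ground.
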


\begin{proof}
See \cite{BNP}.
\end{proof}

\bigskip

\begin{lemma}
\label{BoundMalliavinDerivative}\bigskip Let $H<\frac{1}{2(d+2)}$, $b\in
L_{\infty ,\infty }^{1,\infty }$. Further, let $X_{\cdot }^{n},n\geq 1$ be
the sequence of strong solutions to (\ref{SDE}) associated with functions $%
b_{n}\in C_{c}^{\infty }((0,T)\times \mathbb{R}^{d}),n\geq 1$ such that%
\begin{equation}
b_{n}(t,x)\underset{n\longrightarrow \infty }{\longrightarrow }b(t,x)\text{ }%
(t,x)-a.e.,  \label{C1}
\end{equation}%
\begin{equation}
\sup_{n\geq 1}\left\Vert b_{n}\right\Vert _{L_{\infty }^{1}}<\infty
\label{C2}
\end{equation}%
and 
\begin{equation}
\left\vert b(t,x)\right\vert \leq M<\infty ,n\geq 1\text{ a.e. for some
constant }M.  \label{C3}
\end{equation}%
Fix $t\in \lbrack 0,T]$ and $x\in \mathbb{R}^{d}.$ Then there exists a $%
\beta \in (0,1/2)$ such that%
\begin{equation*}
\sup_{n\geq 1}\int_{0}^{t}\int_{0}^{t}\frac{E[\left\Vert D_{\theta
}X_{t}^{n}-D_{\theta ^{\prime }}X_{t}^{n}\right\Vert ^{2}]}{\left\vert
\theta -\theta ^{\prime }\right\vert ^{1+2\beta }}d\theta ^{\prime }d\theta
\leq \sup_{n\geq 1}C_{H,d,T}(\left\Vert b_{n}\right\Vert _{L_{\infty
}^{\infty }},\left\Vert b_{n}\right\Vert _{L_{\infty }^{1}})<\infty
\end{equation*}%
and%
\begin{equation}
\sup_{n\geq 1}\left\Vert D_{\cdot }X_{t}^{n}\right\Vert _{L^{2}(\Omega
\times \lbrack 0,T])}\leq \sup_{n\geq 1}C_{H,d,T}(\left\Vert
b_{n}\right\Vert _{L_{\infty }^{\infty }},\left\Vert b_{n}\right\Vert
_{L_{\infty }^{1}})<\infty  \label{D}
\end{equation}%
for some continuous function $C_{H,d,T}:[0,\infty )^{2}\longrightarrow
\lbrack 0,\infty ).$
\end{lemma}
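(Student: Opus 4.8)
The plan is to prove both estimates by working first with the smooth approximating drifts $b_{n}\in C_{c}^{\infty }((0,T)\times \mathbb{R}^{d})$, for which every object is classical, and to extract bounds that depend on $b_{n}$ \emph{only} through $\left\Vert b_{n}\right\Vert _{L_{\infty }^{\infty }}$ and $\left\Vert b_{n}\right\Vert _{L_{\infty }^{1}}$; uniformity in $n$ then follows from (\ref{C2})--(\ref{C3}) together with the continuity of $C_{H,d,T}$. By Theorem \ref{StrongSolution} each $X_{t}^{n}$ is Malliavin differentiable in the direction of the Brownian motion $B$ of (\ref{RepFractional}). Differentiating (\ref{SDE}) and using $D_{\theta }B_{t}^{H}=K_{H}(t,\theta )$ for $0\leq \theta \leq t$ (and $0$ otherwise) yields the linear Volterra equation
\[
D_{\theta }X_{t}^{n}=K_{H}(t,\theta )I_{d\times d}+\int_{\theta }^{t}\nabla b_{n}(s,X_{s}^{n})\,D_{\theta }X_{s}^{n}\,ds,\qquad 0\leq \theta \leq t,
\]
with $D_{\theta }X_{t}^{n}=0$ for $\theta >t$. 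Picard iteration gives the Neumann representation
\[
D_{\theta }X_{t}^{n}=K_{H}(t,\theta )I_{d\times d}+\sum_{m\geq 1}\int_{\theta \leq s_{m}\leq \cdots \leq s_{1}\leq t}\Big(\prod_{j=1}^{m}\nabla b_{n}(s_{j},X_{s_{j}}^{n})\Big)K_{H}(s_{m},\theta )\,ds_{1}\cdots ds_{m},
\]
exhibiting $D_{\theta }X_{t}^{n}$ as an explicit series whose terms contain products of the Jacobians $\nabla b_{n}$. This is the direct analogue of the representation underlying Lemma \ref{BoundedDerivatives} (proved in \cite{BNP}), so the same machinery should apply.

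The heart of the argument is to bound the $L^{2}(\Omega )$-norm of each term uniformly in $n$ \emph{without} letting the spatial derivatives of $b_{n}$ survive in the final estimate. First I would apply Girsanov's theorem, under which $X^{n}$ has the law of $x+B^{H}$ and the drift is absorbed into a Radon--Nikodym density $\mathcal{E}_{n}$; since $\left\vert b_{n}\right\vert \leq M$ by (\ref{C3}), the exponential moments of $\mathcal{E}_{n}$ are bounded uniformly in $n$, so a Cauchy--Schwarz step reduces matters to functionals of the shifted fractional Brownian motion $x+B^{H}$. Then I would invoke the "local time variational calculus" of \cite{BNP}, \cite{BLPP}, \cite{ABP}: repeated integration by parts over the time simplex transfers each derivative $\nabla b_{n}(s_{j},\cdot )$ onto the nondegenerate Gaussian transition density of $B^{H}$, whose scaling is pinned down by the self-similarity of the fractional noise, so that the $m$-th term is controlled by $\left\Vert b_{n}\right\Vert _{L_{\infty }^{\infty }}$, $\left\Vert b_{n}\right\Vert _{L_{\infty }^{1}}$ and a singular time weight. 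Integrating that weight over the simplex produces Gamma-function factors of the form $|t-\theta |^{\gamma m}/\Gamma (\alpha m+c)$, and it is exactly here that the standing assumption $H<\frac{1}{2(d+2)}$ enters, since each $d$-dimensional integration by parts costs a power of the time gaps scaling with $H$ and $d$. Summing the resulting Mittag-Leffler-type series gives a pointwise-in-$\theta $ bound $E[\left\Vert D_{\theta }X_{t}^{n}\right\Vert ^{2}]\leq C_{H,d,T}(\left\Vert b_{n}\right\Vert _{L_{\infty }^{\infty }},\left\Vert b_{n}\right\Vert _{L_{\infty }^{1}})^{2}$ with $C_{H,d,T}$ continuous; integrating in $\theta $ over $[0,T]$ then yields (\ref{D}).

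For the Sobolev--Slobodeckij seminorm I would run the same expansion on the difference $D_{\theta }X_{t}^{n}-D_{\theta ^{\prime }}X_{t}^{n}$ (say $\theta <\theta ^{\prime }$), splitting each term into a contribution from the mismatch of the simplex endpoints $\theta ,\theta ^{\prime }$ and a contribution from the kernel difference $K_{H}(\cdot ,\theta )-K_{H}(\cdot ,\theta ^{\prime })$. The leading term is the deterministic $\sqrt{d}\,\left\Vert K_{H}(t,\theta )-K_{H}(t,\theta ^{\prime })\right\Vert $, and the fractional regularity of $K_{H}$ gives $\left\Vert K_{H}(t,\theta )-K_{H}(t,\theta ^{\prime })\right\Vert \lesssim |\theta -\theta ^{\prime }|^{\beta ^{\prime }}$ for every $\beta ^{\prime }<H$, while the integral smoothing in the higher terms makes their $\theta $-regularity no worse. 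Choosing $\beta \in (0,H)\subset (0,1/2)$ with $\beta <\beta ^{\prime }$ then makes the double integral $\int_{0}^{t}\int_{0}^{t}|\theta -\theta ^{\prime }|^{-1-2\beta }E[\left\Vert D_{\theta }X_{t}^{n}-D_{\theta ^{\prime }}X_{t}^{n}\right\Vert ^{2}]\,d\theta ^{\prime }d\theta $ finite, again bounded by a continuous function of the two norms of $b_{n}$. Finally, by (\ref{C2})--(\ref{C3}) we have $\left\Vert b_{n}\right\Vert _{L_{\infty }^{\infty }}\leq M$ and $\sup_{n}\left\Vert b_{n}\right\Vert _{L_{\infty }^{1}}<\infty $, so the arguments of $C_{H,d,T}$ stay in a fixed compact set and $\sup_{n}C_{H,d,T}(\cdots )<\infty $ by continuity, giving both inequalities.

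The main obstacle I expect is the difference estimate: controlling $E[\left\Vert D_{\theta }X_{t}^{n}-D_{\theta ^{\prime }}X_{t}^{n}\right\Vert ^{2}]$ uniformly in $n$ with a Hölder exponent strictly above the target $\beta $, while $K_{H}$ is singular on the diagonal $s=t$ and $b$ is merely bounded and integrable. The delicate point is that differencing two simplex integrals with shifted lower limits simultaneously strips a factor of the singular kernel and perturbs the integration-by-parts combinatorics, so one must check that the per-term bounds still carry the $\Gamma $-factor decay required for summability. This is where the interplay between the dimension $d$, the kernel's singularity, and the threshold $H<\frac{1}{2(d+2)}$ is most demanding, and where the local-time variational calculus must be applied with the greatest care.
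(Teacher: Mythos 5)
The paper does not actually prove this lemma: its proof consists of the citation ``See \cite{BNP}''. Your proposed architecture --- the linear Volterra equation $D_{\theta }X_{t}^{n}=K_{H}(t,\theta )I_{d\times d}+\int_{\theta }^{t}\nabla b_{n}(s,X_{s}^{n})D_{\theta }X_{s}^{n}ds$, its Neumann/Picard expansion with the factor $K_{H}(s_{m},\theta )$ attached to the innermost variable, Girsanov with exponential moments controlled uniformly in $n$ through (\ref{C3}) (Lemma \ref{Novikov}), the shuffle-permutation estimates (Lemma \ref{partialshuffle}) and the simplex estimate of Theorem \ref{mainestimate2}, whose weights $\varkappa _{j}(s)=(K_{H}(s,\theta ))^{\varepsilon _{j}}$ exist precisely to absorb the kernel factor your expansion produces, and finally Gamma-function summability under $H<\frac{1}{2(d+2)}$ --- is exactly the machinery of that reference, which this paper redeploys in its own proof of Lemma \ref{SecondBoundedDerivative}. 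So the route is the intended one.

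There is, however, a genuine gap: two of your intermediate claims are false, both for the same reason, namely that you treat $\theta \mapsto K_{H}(t,\theta )$ as if it were bounded and H\"{o}lder continuous. For $H<1/2$ the kernel (\ref{KH}) is singular on the diagonal: $K_{H}(t,\theta )\sim c_{H}(t-\theta )^{H-\frac{1}{2}}\rightarrow \infty $ as $\theta \uparrow t$. Consequently (i) the claimed pointwise bound $E[\Vert D_{\theta }X_{t}^{n}\Vert ^{2}]\leq C_{H,d,T}(\cdots )^{2}$ cannot hold: the leading term of your own series is the deterministic matrix $K_{H}(t,\theta )I_{d\times d}$, while for each fixed $n$ the remainder $\sum_{m\geq 1}\int_{\Delta _{\theta ,t}^{m}}\bigl(\prod_{j}\nabla b_{n}\bigr)K_{H}(s_{m},\theta )ds$ tends to $0$ in $L^{2}(\Omega )$ as $\theta \uparrow t$ (the singularity $(s_{m}-\theta )^{H-\frac{1}{2}}$ is integrable and the simplex shrinks), so $E[\Vert D_{\theta }X_{t}^{n}\Vert ^{2}]$ in fact diverges as $\theta \uparrow t$; and (ii) the pointwise estimate $\Vert K_{H}(t,\theta )-K_{H}(t,\theta ^{\prime })\Vert \lesssim |\theta -\theta ^{\prime }|^{\beta ^{\prime }}$ is false, since it would force $K_{H}(t,\cdot )$ to be bounded near $\theta =t$. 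This singularity is precisely why both conclusions of the lemma are stated in $\theta $-integrated form. The repair is to run your series bounds inside the $\theta $-integrals rather than pointwise: for (\ref{D}) the leading term contributes $\int_{0}^{t}K_{H}(t,\theta )^{2}d\theta =t^{2H}$ per coordinate (square-integrable singularity), and the higher-order terms are handled after Girsanov by Theorem \ref{mainestimate2}; for the Slobodeckij seminorm one uses that $\theta \mapsto (t-\theta )^{H-\frac{1}{2}}$ has finite seminorm $\int_{0}^{t}\int_{0}^{t}|K_{H}(t,\theta )-K_{H}(t,\theta ^{\prime })|^{2}|\theta -\theta ^{\prime }|^{-1-2\beta }d\theta ^{\prime }d\theta $ exactly when $\beta <H$ (a direct computation with the kernel, not a H\"{o}lder bound), and controls the differenced higher-order terms with the same Girsanov-plus-shuffle machinery. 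These integrated estimates are the actual content of the proof in \cite{BNP}; without them your argument, as written, breaks at both conclusions.
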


\begin{proof}
See \cite{BNP}.
\end{proof}

\bigskip

\begin{proposition}
\label{ConvergenceX}Let $X_{\cdot }^{x,n},n\geq 1$ be a sequence of strong
solutions as in Lemma \ref{BoundMalliavinDerivative} and $X_{\cdot }$ the
strong solution to (\ref{SDE}). Then%
\begin{equation*}
X_{t}^{x,n}\underset{n\longrightarrow \infty }{\longrightarrow }X_{t}^{x}%
\text{ in }L^{2}(\Omega )
\end{equation*}%
for all $t,x.$
\end{proposition}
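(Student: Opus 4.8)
The plan is to obtain the convergence from a relative-compactness argument in $L^{2}(\Omega)$ together with the uniqueness of strong solutions in Theorem~\ref{StrongSolution}. First I would invoke a Malliavin-calculus compactness criterion (in the spirit of Da Prato, Malliavin and Nualart): if a sequence $\{F_{n}\}$ of Malliavin differentiable random variables satisfies
\begin{equation*}
\sup_{n\geq 1}E[\left\Vert D_{\cdot}F_{n}\right\Vert _{L^{2}([0,T])}^{2}]<\infty \quad \text{and} \quad \sup_{n\geq 1}\int_{0}^{t}\int_{0}^{t}\frac{E[\left\Vert D_{\theta}F_{n}-D_{\theta^{\prime}}F_{n}\right\Vert ^{2}]}{\left\vert \theta -\theta^{\prime}\right\vert ^{1+2\beta}}\,d\theta^{\prime}\,d\theta <\infty
\end{equation*}
for some $\beta>0$, then $\{F_{n}\}$ is relatively compact in $L^{2}(\Omega)$. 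Applying this coordinate-wise with $F_{n}=X_{t}^{x,n}$ (for fixed $t$ and $x$), the two displayed estimates of Lemma~\ref{BoundMalliavinDerivative} are precisely the required hypotheses, so the family $\{X_{t}^{x,n}\}_{n\geq 1}$ is relatively compact in $L^{2}(\Omega)$.

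It then suffices to show that every $L^{2}(\Omega)$-limit point of the sequence coincides with $X_{t}^{x}$: the limit being independent of the chosen subsequence, the whole sequence must converge to $X_{t}^{x}$. So from an arbitrary subsequence I would extract a further subsequence $X_{t}^{x,n_{k}}$ converging in $L^{2}(\Omega)$, and (along yet another subsequence) $\mu$-a.s., to some limit $Y_{t}$. Writing the integral form
\begin{equation*}
X_{t}^{x,n_{k}}=x+\int_{0}^{t}b_{n_{k}}(s,X_{s}^{x,n_{k}})\,ds+B_{t}^{H}
\end{equation*}
and observing that the noise term does not depend on $n$, the identification reduces to proving
\begin{equation*}
\int_{0}^{t}b_{n_{k}}(s,X_{s}^{x,n_{k}})\,ds\longrightarrow \int_{0}^{t}b(s,Y_{s})\,ds \quad \text{in probability.}
\end{equation*}

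This last convergence is the main obstacle, since $b_{n}\to b$ holds only $(t,x)$-a.e. (condition~(\ref{C1})) and $b$ is genuinely discontinuous, so no pathwise continuity of $s\mapsto b(s,\cdot)$ is available. My plan is to exploit the Girsanov-type change of measure for the SDE~(\ref{SDE}) on the level of the driving Brownian motion $B$ in the representation~(\ref{RepFractional}) (as in \cite{BNP}): because the approximating drifts are uniformly bounded (conditions~(\ref{C2})--(\ref{C3})), the law of $X_{s}^{x,n}$ is absolutely continuous with respect to that of $x+B_{s}^{H}$, with Radon--Nikodym densities whose $L^{p}(\mu)$-norms are bounded uniformly in $n$ via a Novikov-type estimate. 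This in turn yields a uniform bound on the Lebesgue densities of $X_{s}^{x,n}$. Splitting the integrand as
\begin{equation*}
b_{n_{k}}(s,X_{s}^{x,n_{k}})-b(s,Y_{s})=\big(b_{n_{k}}-b\big)(s,X_{s}^{x,n_{k}})+\big(b(s,X_{s}^{x,n_{k}})-b(s,Y_{s})\big),
\end{equation*}
I would control the first term by transporting the $(t,x)$-a.e. convergence through the uniformly bounded densities and dominated convergence, and the second term by the $\mu$-a.s. convergence $X_{s}^{x,n_{k}}\to Y_{s}$ combined with the same density bounds, i.e.\ by an occupation-time (local-time) argument as in the local time variational calculus. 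This shows that $Y$ solves the SDE~(\ref{SDE}) with the singular drift $b$; by the uniqueness asserted in Theorem~\ref{StrongSolution}, $Y_{t}=X_{t}^{x}$, which completes the proof.
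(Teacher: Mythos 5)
Your two-step architecture---relative compactness of $\{X_{t}^{x,n}\}_{n\geq 1}$ in $L^{2}(\Omega )$ via the Da Prato--Malliavin--Nualart criterion, then identification of every subsequential limit with $X_{t}^{x}$ and an appeal to uniqueness in Theorem \ref{StrongSolution}---is sound, and its first half is exactly what Lemma \ref{BoundMalliavinDerivative} is tailored for (you should also record $\sup_{n}E[\Vert X_{t}^{x,n}\Vert ^{2}]<\infty $, immediate from (\ref{C3}), which the criterion also requires). Note that the paper itself offers no written proof of Proposition \ref{ConvergenceX}: it defers to \cite{BNP}, where (as in the Wiener-case predecessor \cite{MMNPZ}) the identification is done differently from yours. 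There one never passes to the limit in the integral equation: one proves $X_{t}^{x,n}\rightarrow X_{t}^{x}$ \emph{weakly} in $L^{2}(\Omega )$ by testing against a total family of Dol\'{e}ans/Cameron--Martin exponentials and computing the resulting expectations via Girsanov's theorem under the law of $x+B_{\cdot }^{H}$; in that picture only the a.e. convergence (\ref{C1}), the uniform bounds (\ref{C2})--(\ref{C3}) and dominated convergence with respect to a \emph{finite} Gaussian measure are needed, and the strong compactness upgrades the weak limit to a strong one. That route sidesteps precisely the two difficulties your more direct route must confront.

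Those difficulties are where your sketch has genuine gaps. (i) A ``uniform bound on the Lebesgue densities of $X_{s}^{x,n}$'' does not handle the term $(b_{n_{k}}-b)(s,X_{s}^{x,n_{k}})$: conditions (\ref{C1})--(\ref{C3}) do \emph{not} imply $\Vert (b_{n_{k}}-b)(s,\cdot )\Vert _{L^{1}(\mathbb{R}^{d})}\rightarrow 0$ (a bump escaping to infinity satisfies all three), and a bounded density paired with an a.e.-null but merely $L^{1}$-bounded sequence proves nothing. What saves this step is the sharper consequence of Girsanov, H\"{o}lder and Lemma \ref{Novikov}: $E[|g(X_{s}^{x,n})|]\leq C\left( \int_{\mathbb{R}^{d}}|g(y)|^{p}\phi _{s}(y)dy\right) ^{1/p}$ uniformly in $n$, where $\phi _{s}$ is the Gaussian density of $x+B_{s}^{H}$; against this finite reference measure, a.e. convergence plus the bound $M$ does yield convergence by dominated convergence. (ii) The term $b(s,X_{s}^{x,n_{k}})-b(s,Y_{s})$ is the crux: since $b$ is discontinuous, a.s. convergence of the arguments gives nothing by itself, and ``an occupation-time (local-time) argument'' is a label, not an argument---the local time variational calculus of \cite{BNP}, \cite{BLPP} is an integration-by-parts device behind Theorem \ref{mainestimate2}, not a tool for composing $b$ with converging random variables. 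A workable completion approximates $b(s,\cdot )$ by continuous functions in $L^{p}(\phi _{s}dy)$ and applies the bound in (i) both to $X_{s}^{x,n_{k}}$ and to the law of the limit $Y_{s}$; the latter bound is not free and must be established, e.g. by passing the inequality in (i) to the limit. (iii) Your compactness extraction gives convergence at the single fixed time $t$; to pass to the limit in $\int_{0}^{t}b_{n_{k}}(s,X_{s}^{x,n_{k}})ds$ you need one subsequence along which a.s. convergence holds for a.e. $s\leq t$, which requires a diagonal argument over a countable dense set of times together with the uniform $L^{2}$-equicontinuity in time of the $X^{x,n}$. With (i)--(iii) repaired your proof closes, but as written these are real gaps.
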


\begin{proof}
See \cite{BNP}.
\end{proof}

\bigskip

\begin{lemma}
\label{WeakConvergence}\bigskip\ Let $U\subset \mathbb{R}^{d}$ be an open
and bounded subset. Consider the sequence $X_{\cdot }^{x,n},n\geq 1$ in
Proposition \ref{ConvergenceX}. Then%
\begin{equation*}
\frac{\partial }{\partial x}X_{\cdot }^{\cdot ,n}\underset{n\longrightarrow
\infty }{\longrightarrow }\frac{\partial }{\partial x}X_{\cdot }^{\cdot }
\end{equation*}%
in $L^{2}([0,T]\times \Omega \times U)$ weakly.

\begin{proof}
This result is a consequence of Proposition \ref{ConvergenceX} and the
estimate in Lemma \ref{BoundedDerivatives}.
\end{proof}
\end{lemma}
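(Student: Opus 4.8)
The plan is to exploit the Hilbert-space structure of $L^{2}([0,T]\times \Omega \times U)$, in which a norm-bounded sequence is weakly precompact, and then to pin down the weak limit by integration by parts in the space variable. So I would first show that $\frac{\partial }{\partial x}X_{\cdot }^{\cdot ,n}$ is bounded in this space, extract a weakly convergent subsequence, and finally identify its limit with $\frac{\partial }{\partial x}X_{\cdot }^{\cdot }$.

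First I would establish the uniform bound. Since each $b_{n}\in C_{c}^{\infty }((0,T)\times \mathbb{R}^{d})$, Lemma \ref{BoundedDerivatives} applies to $X_{\cdot }^{\cdot ,n}$ and gives, with $p=2$,
\begin{equation*}
\sup_{x\in \mathbb{R}^{d}}E\left[ \left\Vert \frac{\partial }{\partial x}X_{t}^{x,n}\right\Vert ^{2}\right] \leq C_{2,H,d,T}\bigl(\left\Vert b_{n}\right\Vert _{L_{\infty }^{\infty }},\left\Vert b_{n}\right\Vert _{L_{\infty }^{1}}\bigr).
\end{equation*}
Conditions (\ref{C2})--(\ref{C3}) of Lemma \ref{BoundMalliavinDerivative} bound $\left\Vert b_{n}\right\Vert _{L_{\infty }^{\infty }}$ and $\left\Vert b_{n}\right\Vert _{L_{\infty }^{1}}$ uniformly in $n$, and $C_{2,H,d,T}$ is continuous, so the right-hand side is uniformly bounded in $n$. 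Because $U$ is bounded, integrating over $U\times \lbrack 0,T]$ then yields $\sup_{n}\left\Vert \frac{\partial }{\partial x}X_{\cdot }^{\cdot ,n}\right\Vert _{L^{2}([0,T]\times \Omega \times U)}<\infty $, whence every subsequence admits a further subsequence converging weakly to some $Y$.

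The crux is to identify $Y$ with $\frac{\partial }{\partial x}X_{\cdot }^{\cdot }$. I would test against product functions $h(t)\psi (\omega )\varphi (x)$ with $h\in L^{2}([0,T])$, $\psi \in L^{2}(\Omega )$ and $\varphi \in C_{c}^{\infty }(U)$, which are total in $L^{2}([0,T]\times \Omega \times U)$. For fixed $t,\omega ,n$, integration by parts in $x$ gives
\begin{equation*}
\int_{U}\frac{\partial }{\partial x}X_{t}^{x,n}\,\varphi (x)\,dx=-\int_{U}X_{t}^{x,n}\,\frac{\partial \varphi }{\partial x}(x)\,dx.
\end{equation*}
Multiplying by $h(t)\psi (\omega )$, taking expectation and integrating in $t$, I pass to the limit along the weakly convergent subsequence: the left side tends to $\langle Y,h\psi \varphi \rangle $. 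To handle the right side I would upgrade Proposition \ref{ConvergenceX} to strong convergence $X_{\cdot }^{\cdot ,n}\to X_{\cdot }^{\cdot }$ in $L^{2}([0,T]\times \Omega \times U)$, combining its pointwise-in-$(t,x)$ convergence in $L^{2}(\Omega )$ with the uniform bound $\left\vert X_{t}^{x,n}-x-B_{t}^{H}\right\vert \leq MT$ from (\ref{C3}) via bounded convergence; the right side then converges to $-\int_{0}^{T}h(t)E[\psi \int_{U}X_{t}^{x}\frac{\partial \varphi }{\partial x}(x)\,dx]\,dt$. Since $x\mapsto X_{t}^{x}$ lies in $W^{1,p}(U)$ by Theorem \ref{StrongSolution}, the Sobolev integration-by-parts identity rewrites this as $\langle \frac{\partial }{\partial x}X_{\cdot }^{\cdot },h\psi \varphi \rangle $. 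Totality of the test functions gives $Y=\frac{\partial }{\partial x}X_{\cdot }^{\cdot }$, and as every weakly convergent subsequence shares this limit, the full sequence converges weakly.

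The hard part will be this identification step, i.e.\ making sure the a priori weak limit $Y$ is correctly pinned down. It hinges on two points: upgrading the merely pointwise $L^{2}(\Omega )$-convergence of Proposition \ref{ConvergenceX} to strong $L^{2}([0,T]\times \Omega \times U)$-convergence, so that the right-hand side passes to the limit; and using the Sobolev regularity of the limiting flow from Theorem \ref{StrongSolution}, so that the distributional derivative coincides with the genuine weak derivative $\frac{\partial }{\partial x}X_{t}^{x}$.
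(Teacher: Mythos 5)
Your proposal is correct and is exactly the argument the paper compresses into its one-line proof: the uniform bound from Lemma \ref{BoundedDerivatives} (together with (\ref{C2})--(\ref{C3})) gives weak precompactness in the Hilbert space $L^{2}([0,T]\times \Omega \times U)$, and Proposition \ref{ConvergenceX} plus integration by parts against test functions identifies every subsequential weak limit with the Sobolev derivative $\frac{\partial }{\partial x}X_{\cdot }^{\cdot }$ furnished by Theorem \ref{StrongSolution}. Your filled-in details (upgrading the pointwise $L^{2}(\Omega)$-convergence to strong convergence on the product space via the bound $\vert X_{t}^{x,n}-x-B_{t}^{H}\vert \leq MT$, and the subsequence-of-subsequence argument) are sound.
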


\bigskip

We are coming now to the main result of our article:

\begin{theorem}[Bismut-Elworthy-Li formula]
\label{Bismut}Let $H<\frac{1}{2(d+2)}$ and let $X_{\cdot }^{x}$ be the
unique strong solution to the SDE%
\begin{equation*}
dX_{t}^{x}=b(t,X_{t}^{x})dt+dB_{t}^{H},X_{0}^{x}=x,0\leq t\leq T
\end{equation*}%
for $b\in L_{\infty ,\infty }^{1,\infty }$. Further, assume that $U$ is a
bounded and open subset of $\mathbb{R}^{d}$ and $\Phi :\mathbb{R}%
^{d}\longrightarrow \mathbb{R}$ a Borel measurable function such that%
\begin{equation*}
\Phi (X_{T}^{\cdot })\in L^{2}(\Omega \times U,\mu \times dx).
\end{equation*}%
. In addition, consider a bounded Borel measurable function $%
a:[0,T]\longrightarrow \mathbb{R}$ such that\ 
\begin{equation*}
\int_{0}^{T}a(s)ds=1.
\end{equation*}%
Then%
\begin{eqnarray}
&&\frac{\partial }{\partial x}E[\Phi (X_{T}^{x})]  \notag \\
&=&C_{H}E[\Phi (X_{T}^{x})\int_{0}^{T}u^{-H-\frac{1}{2}%
}\int_{u}^{T}a(s-u)(s-u)^{\frac{1}{2}-H}s^{H-\frac{1}{2}}\left( \frac{%
\partial }{\partial x}X_{s-u}^{x}\right) ^{\ast }dB_{s}du]^{\ast }
\label{BEL}
\end{eqnarray}%
for all $x\in U$ a.e., $0<t\leq T$, where $\ast $ denotes the transposition
of matrices and where $C_{H}=1/(c_{H}\Gamma (\frac{1}{2}+H)\Gamma (\frac{1}{2%
}-H))$ for%
\begin{equation*}
c_{H}=(\frac{2H}{(1-2H)B(1-2H,H+1/2)})^{1/2}.
\end{equation*}%
Here $\Gamma $ and $B$ are the Gamma and Beta function, respectively.
\end{theorem}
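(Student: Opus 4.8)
The plan is to prove (\ref{BEL}) first for smooth data and then strip away the regularity using the approximation machinery already assembled in Lemmas \ref{BoundedDerivatives}--\ref{WeakConvergence}. So I would begin by assuming $b\in C_c^\infty((0,T)\times\mathbb{R}^d)$ and $\Phi\in C_b^1$, so that $x\mapsto X_T^x$ is a genuine $C^1$ flow and $x\mapsto E[\Phi(X_T^x)]$ is classically differentiable, whence the chain rule gives
\begin{equation*}
\frac{\partial}{\partial x}E[\Phi(X_T^x)]=E\!\left[\Phi'(X_T^x)\frac{\partial}{\partial x}X_T^x\right].
\end{equation*}
The whole point is then to move the derivative off $\Phi$ by a Malliavin integration by parts, and the only real content is to exhibit the correct Cameron--Martin direction.

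The cleanest route, I think, is to work first with the Malliavin derivative $D^H$ in the direction of the fractional Brownian motion itself, rather than the underlying $B$ of (\ref{RepFractional}). Since $D^H_\theta B_t^H=\mathbf{1}_{\{\theta\le t\}}$, the process $\theta\mapsto D^H_\theta X_T^x$ solves the same linearized equation as $\frac{\partial}{\partial x}X_\cdot^x$ but with a unit forcing switched on at time $\theta$; writing $R_{\theta,t}$ for the fundamental solution of $\dot y=\nabla b(t,X_t^x)y$, this yields the two clean identities $D^H_\theta X_T^x=R_{\theta,T}$ and $\frac{\partial}{\partial x}X_T^x=R_{0,T}$. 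Consequently, choosing the (random) direction $h_\theta:=a(\theta)\frac{\partial}{\partial x}X_\theta^x=a(\theta)R_{0,\theta}$ and using $R_{\theta,T}R_{0,\theta}=R_{0,T}$ together with $\int_0^T a=1$, one gets exactly
\begin{equation*}
\int_0^T D^H_\theta X_T^x\,h_\theta\,d\theta=R_{0,T}\int_0^T a(\theta)\,d\theta=\frac{\partial}{\partial x}X_T^x.
\end{equation*}
This is the fractional analogue of the Wiener computation behind (\ref{W}); the function $a$ enters precisely as the gauge freedom that makes the direction integrate to one.

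With this in hand the integration by parts is formal: setting $F=\Phi(X_T^x)$ and using $D^HF=\Phi'(X_T^x)D^HX_T^x$, the identity above reads $\Phi'(X_T^x)\frac{\partial}{\partial x}X_T^x=\langle D^HF,h\rangle_{\mathcal H}$, whence by the duality between $D^H$ and the fractional Skorokhod integral,
\begin{equation*}
E\!\left[\Phi'(X_T^x)\frac{\partial}{\partial x}X_T^x\right]=E\!\left[\Phi(X_T^x)\,\delta^H(h)\right].
\end{equation*}
To recognize $\delta^H(h)$ as the stated weight I would pass to the underlying Brownian motion via the transfer identity $\delta^H(h)=\delta(K_H^\ast h)$ and then insert the explicit form of $K_H^\ast$ for $H<1/2$ coming from the kernel (\ref{KH}). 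Realizing $K_H^\ast$ as the multipliers $r\mapsto r^{1/2-H}$, $s\mapsto s^{H-1/2}$ composed with a Riemann--Liouville fractional integral of order $1/2-H$, the direction $h_\theta=a(\theta)\frac{\partial}{\partial x}X_\theta^x$ is mapped to a multiple of
\begin{equation*}
s^{H-1/2}\int_0^s (s-r)^{-H-1/2}r^{1/2-H}a(r)\frac{\partial}{\partial x}X_r^x\,dr,
\end{equation*}
and the substitution $r=s-u$ followed by Fubini turns $\delta(K_H^\ast h)=\int_0^T(K_H^\ast h)_s\,dB_s$ into precisely the double integral in (\ref{BEL}); assembling the normalizing constants of the kernel and of the fractional integral is what produces the prefactor $C_H=1/(c_H\Gamma(\tfrac12+H)\Gamma(\tfrac12-H))$. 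Getting this deterministic fractional-calculus bookkeeping exactly right, and checking that $K_H^\ast$ maps the chosen direction into $L^2$, is the main computational step, but it is entirely classical once (\ref{KH}) is in place.

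Finally I would remove the smoothness. For fixed smooth $\Phi$, take the approximating drifts $b_n\in C_c^\infty$ of Lemma \ref{BoundMalliavinDerivative} and write the smooth-$b$ formula in its weak form on $U$, tested against $\varphi\in C_c^\infty(U)$. On the left, $E[\Phi(X_T^{x,n})]\to E[\Phi(X_T^x)]$ by Proposition \ref{ConvergenceX}. On the right, the weight is $\delta$ applied to $K_H^\ast(a\,\frac{\partial}{\partial x}X^{\cdot,n})$, a fixed bounded linear image of the Jacobians; by Lemma \ref{WeakConvergence} these converge weakly in $L^2([0,T]\times\Omega\times U)$, while Lemmas \ref{BoundedDerivatives} and \ref{BoundMalliavinDerivative} supply the uniform bounds needed for the Skorokhod integrals to converge weakly in $L^2(\Omega)$. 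Pairing this weak convergence against the strong convergence $\Phi(X_T^{x,n})\to\Phi(X_T^x)$ lets me pass to the limit in the product, giving (\ref{BEL}) for smooth $\Phi$; the extension to a general Borel $\Phi$ with $\Phi(X_T^\cdot)\in L^2(\Omega\times U)$ then follows by mollification, since both sides are continuous for $L^2(\Omega\times U)$-convergence of $\Phi(X_T^\cdot)$ (the weight lies in $L^2$) and the Malliavin nondegeneracy from the additive noise makes $\Phi_m(X_T^x)\to\Phi(X_T^x)$ in that space. I expect the genuinely delicate point to be this last limit passage: reconciling the merely weak convergence of the Jacobians with the need to push the limit through both the Skorokhod integral and the expectation against $\Phi(X_T^x)$, where only the uniform Malliavin bounds of Lemma \ref{BoundMalliavinDerivative} keep $\delta$ well behaved along the approximation.
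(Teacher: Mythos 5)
You follow the paper's own architecture quite closely: smooth approximation of $b$, the flow/Malliavin identity $D_{\theta }^{H}X_{T}^{x}=\frac{\partial }{\partial x}X_{T}^{\theta ,X_{\theta }^{x}}$ leading to $\int_{0}^{T}D_{\theta }^{H}X_{T}^{x}\,a(\theta )\frac{\partial }{\partial x}X_{\theta }^{x}\,d\theta =\frac{\partial }{\partial x}X_{T}^{x}$, a transfer to the underlying Wiener process, a weak-form limit passage via Proposition \ref{ConvergenceX} and Lemmas \ref{BoundedDerivatives}, \ref{BoundMalliavinDerivative}, \ref{WeakConvergence}, and a density argument for Borel $\Phi $; and your final kernel and constant agree with (\ref{BEL}). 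However, the justification of the central integration-by-parts step contains two errors that happen to cancel. First, what the flow identity produces is the \emph{Lebesgue} pairing $\int_{0}^{T}D_{\theta }^{H}F\,h_{\theta }\,d\theta $, and for $H<\frac{1}{2}$ this is \emph{not} $\left\langle D^{H}F,h\right\rangle _{\mathcal{H}}$: the inner product of $\mathcal{H}$ is $\left\langle f,g\right\rangle _{\mathcal{H}}=\left\langle K_{H}^{\ast }f,K_{H}^{\ast }g\right\rangle _{L^{2}([0,T])}$ (for instance $\left\langle 1_{[0,t]},1_{[0,t]}\right\rangle _{\mathcal{H}}=t^{2H}\neq t$), so the fractional duality $E[\left\langle D^{H}F,h\right\rangle _{\mathcal{H}}]=E[F\delta ^{H}(h)]$ cannot be applied to the quantity you actually have. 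Second, the operator you call $K_{H}^{\ast }$ --- multipliers composed with a \emph{left-sided} Riemann--Liouville fractional \emph{integral} of order $\frac{1}{2}-H$, kernel $(s-r)^{-H-\frac{1}{2}}$ --- is not $K_{H}^{\ast }$: for $H<\frac{1}{2}$, $K_{H}^{\ast }$ is given by a right-sided fractional \emph{derivative} $D_{T^{-}}^{\frac{1}{2}-H}$ (see the Appendix). What you wrote down is, up to the constant $C_{H}$, the $L^{2}$-adjoint of $(K_{H}^{\ast })^{-1}$, which is exactly the operator that must appear in the weight. Carried out literally (true $K_{H}^{\ast }$, weight $\delta (K_{H}^{\ast }h)$), your premise $\Phi ^{\shortmid }(X_{T}^{x})\frac{\partial }{\partial x}X_{T}^{x}=\left\langle D^{H}F,h\right\rangle _{\mathcal{H}}$ is false and the weight you would compute is not the one in (\ref{BEL}); your formula comes out right only because the two misidentifications compensate.

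The repair is precisely the paper's route, which reverses the order of your two operations: first transfer the \emph{derivative}, writing $D_{s}^{H}\Phi (X_{T}^{x})=\left( (K_{H}^{\ast })^{-1}D\Phi (X_{T}^{x})\right) (s)=C_{H}s^{\frac{1}{2}-H}\int_{s}^{T}(u-s)^{-H-\frac{1}{2}}u^{H-\frac{1}{2}}D_{u}\Phi (X_{T}^{x})\,du$ (Nualart, Proposition 5.2.1 and p.~285); then use Fubini and the substitutions $u\mapsto u+s$, $s\mapsto s-u$ to move this (now left-sided, adjoint) kernel onto $a\,\frac{\partial }{\partial x}X^{x}$; and only then apply the ordinary Wiener duality for $D$. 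Since the resulting integrand is adapted, the divergence is an It\^{o} integral and (\ref{BEL}) follows with the stated $C_{H}$. A second, smaller point: in the limit passage you say the weak convergence of the Jacobians can simply be ``paired against'' $\Phi (X_{T}^{x})$ and the stochastic integral; the paper makes this rigorous through the Clark--Ocone formula combined with the It\^{o} isometry, which rewrites $E[\Phi (X_{T}^{x})\int_{0}^{T}w_{s}dB_{s}]$ as an expression linear in $w$ paired against $ds\times \mu $-integrable quantities, i.e.\ exhibits $w\mapsto E[\Phi (X_{T}^{x})\delta (w)]$ as a bounded linear functional on $L^{2}([0,T]\times \Omega \times U)$ to which Lemma \ref{WeakConvergence} applies. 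You correctly flag this as the delicate point, but the mechanism (Clark--Ocone) is missing from your sketch.
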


\begin{remark}
Let $\mathcal{P}$ be the predictable $\sigma -$algebra with respect to the $%
\mu -$augmented filtration $\{\mathcal{F}_{t}\}_{0\leq t\leq T}$ generated
by $B_{\cdot }^{H}$. Then $\frac{\partial }{\partial x}X_{t}^{x},0\leq t\leq
T$ on the right hand side of Theorem \ref{Bismut} stands for a process $%
Y:[0,T]\times \Omega \times U\longrightarrow \mathbb{R}^{d\times d}$ in $%
L^{2}([0,T]\times \Omega \times U,\mathcal{P}\otimes \mathcal{B}(U);\mathbb{R%
}^{d\times d})$ such that $Y_{t}^{\cdot }(\omega )$ is the Sobolev
derivative of $X_{t}^{\cdot }(\omega )$ $(t,\omega )-$a.e.
\end{remark}

\begin{proof}
Let $\Phi \in C_{c}^{\infty }(\mathbb{R}^{d})$ and choose a sequence of
functions $b_{n}\in C_{c}^{\infty }((0,T)\times \mathbb{R}^{d})$, which
approximates the vector field $b$ in the sense of (\ref{C1}), (\ref{C2}) and
(\ref{C3}).

Denote by $X_{\cdot }^{s,x,n}$ the unique strong solution to 
\begin{equation*}
dX_{t}^{s,x,n}=b_{n}(t,X_{t}^{s,x,n})dt+dB_{t}^{H},X_{s}^{s,x,n}=x,s\leq
t\leq T
\end{equation*}%
for all $n$. Since $b_{n}\in C_{c}^{\infty }((0,T)\times \mathbb{R}^{d})$,
it follows that there exists a $\Omega ^{\ast }$ with $\mu (\Omega ^{\ast
})=1$ such that for all $\omega \in \Omega ^{\ast },0\leq s\leq t\leq T$%
\begin{equation*}
(x\mapsto X_{t}^{s,x,n}(\omega ))\in C^{\infty }(\mathbb{R}^{d}).
\end{equation*}%
See e.g. \cite{Kunita}.

The latter and dominated convergence then give%
\begin{equation*}
\frac{\partial }{\partial x}E[\Phi (X_{T}^{x,n})]=E[\Phi ^{\shortmid
}(X_{T}^{x,n})\frac{\partial }{\partial x}X_{T}^{x,n}],
\end{equation*}%
where $\Phi ^{\shortmid }$ is the derivative of $\Phi $ and $%
X_{t}^{x,n}=X_{t}^{0,x,n}$. On the other hand, we have that for all $0\leq
s\leq t\leq T,x\in U$%
\begin{equation*}
X_{t}^{x,n}=X_{t}^{s,X_{s}^{x,n},n}\text{ a.e.}
\end{equation*}%
So we obtain that%
\begin{equation*}
\frac{\partial }{\partial x}E[\Phi (X_{T}^{x,n})]=E[\Phi ^{\shortmid
}(X_{T}^{x,n})\frac{\partial }{\partial x}X_{T}^{s,X_{s}^{x,n},n}\frac{%
\partial }{\partial x}X_{s}^{x,n}].
\end{equation*}%
We also know that the Malliavin derivative $D_{\cdot }^{H}X_{t}^{s,x,n}$ of $%
X_{t}^{s,x,n}$ in the direction of $B_{\cdot }^{H}$ exists and satisfies the
equation%
\begin{equation*}
D_{u}^{H}X_{t}^{s,x,n}=\int_{u}^{t}b_{n}^{\shortmid
}(t,X_{r}^{s,x,n})D_{u}^{H}X_{r}^{s,x,n}dr+\chi _{_{(s,t]}(u)}I_{d\times d},
\end{equation*}%
where $I_{d\times d}$ is the identity matrix. Further, we see that $\frac{%
\partial }{\partial x}X_{\cdot }^{u,X_{u}^{x,n},n}$ solves the same equation
for $s=0$. Therefore, we obtain by uniqueness of solutions that%
\begin{equation*}
D_{u}^{H}X_{t}^{x,n}=\frac{\partial }{\partial x}X_{t}^{u,X_{u}^{x,n},n}
\end{equation*}%
a.e. Hence%
\begin{equation*}
\frac{\partial }{\partial x}E[\Phi (X_{T}^{x,n})]=E[\Phi ^{\shortmid
}(X_{T}^{x,n})D_{s}^{H}X_{T}^{x,n}\frac{\partial }{\partial x}X_{s}^{x,n}].
\end{equation*}%
Let $\varphi \in C_{c}^{\infty }(U)$. Then%
\begin{equation*}
-\int_{U}E[\Phi (X_{T}^{x,n})]\frac{\partial }{\partial x}\varphi
(x)dx=\int_{U}\varphi (x)E[\Phi ^{\shortmid
}(X_{T}^{x,n})D_{s}^{H}X_{T}^{x,n}\frac{\partial }{\partial x}X_{s}^{x,n}]dx.
\end{equation*}%
Further, using the fact that the function $a$ sums up to one combined with
the chain rule for $D_{\cdot }^{H}$ (see \cite{Nualart}), we obtain that%
\begin{eqnarray*}
&&-\int_{U}E[\Phi (X_{T}^{x,n})]\frac{\partial }{\partial x}\varphi (x)dx \\
&=&\int_{U}\varphi (x)E[\int_{0}^{T}\{a(s)\Phi ^{\shortmid
}(X_{T}^{x,n})D_{s}^{H}X_{T}^{x,n}\frac{\partial }{\partial x}%
X_{s}^{x,n}\}ds]dx \\
&=&\int_{U}\varphi (x)E[\int_{0}^{T}\{a(s)D_{s}^{H}\Phi (X_{T}^{x,n})\frac{%
\partial }{\partial x}X_{s}^{x,n}\}ds]dx
\end{eqnarray*}

On the other hand, Proposition 5.2.1 and p. 285 in \cite{Nualart} shows that%
\begin{equation*}
D_{s}^{H}\Phi (X_{T}^{x,n})=Cs^{\frac{1}{2}-H}\Big(\int_{s}^{T}(u-s)^{-H-\frac{1%
}{2}}u^{H-\frac{1}{2}}D_{u}\Phi (X_{T}^{x,n})du\Big).
\end{equation*}%
for a constant $C$ depending on $H$. $D_.$ stands here for the Malliavin derivative in the direction of the Brownian motion $B_.$.\par Hence, we obtain by substitution (first
for $u$ substituted by $u+s$ in the above relation and then for $s$ by $s-u$
in the next step), Fubini's theorem and the duality formula with respect to
the Malliavin derivative $D_{\cdot }$ that%
\begin{eqnarray*}
&&-\int_{U}E[\Phi (X_{T}^{x,n})]\frac{\partial }{\partial x}\varphi (x)dx \\
&=&C\int_{U}\varphi (x)E[\int_{0}^{T}\{a(s)Cs^{\frac{1}{2}-H} \\
&&\times (\int_{s}^{T}(u-s)^{-H-\frac{1}{2}}u^{H-\frac{1}{2}}D_{u}\Phi
(X_{T}^{x,n})du)\frac{\partial }{\partial x}X_{s}^{x,n}\}ds]dx \\
&=&C\int_{U}\varphi (x)E[\int_{0}^{T}u^{-H-\frac{1}{2}} \\
&&\times \int_{u}^{T}a(s-u)(s-u)^{\frac{1}{2}-H}s^{H-\frac{1}{2}}D_{s}\Phi
(X_{T}^{x,n})\frac{\partial }{\partial x}X_{s-u}^{x,n}dsdu]dx \\
&=&C\int_{U}\varphi (x)E[\Phi (X_{T}^{x,n}) \\
&&\times \int_{0}^{T}u^{-H-\frac{1}{2}}\int_{u}^{T}a(s-u)(s-u)^{\frac{1}{2}%
-H}s^{H-\frac{1}{2}}\left( \frac{\partial }{\partial x}X_{s-u}^{x,n}\right)
^{\ast }dB_{s}du]^{\ast }dx \\
&=&I_{1}(n)+I_{2}(n),
\end{eqnarray*}%
where%
\begin{eqnarray*}
I_{1}(n) &:&=C\int_{U}\varphi (x)E[(\Phi (X_{T}^{x,n})-\Phi (X_{T}^{x})) \\
&&\times \int_{0}^{T}u^{-H-\frac{1}{2}}\int_{u}^{T}a(s-u)(s-u)^{\frac{1}{2}%
-H}s^{H-\frac{1}{2}}\left( \frac{\partial }{\partial x}X_{s-u}^{x,n}\right)
^{\ast }dB_{s}du]^{\ast }dx
\end{eqnarray*}%
and%
\begin{eqnarray*}
I_{2}(n) &:&=C\int_{U}\varphi (x)E[\Phi (X_{T}^{x})\int_{0}^{T}u^{-H-\frac{1%
}{2}} \\
&&\times \int_{u}^{T}a(s-u)(s-u)^{\frac{1}{2}-H}s^{H-\frac{1}{2}}\left( 
\frac{\partial }{\partial x}X_{s-u}^{x,n}\right) ^{\ast }dB_{s}du]^{\ast }dx
\\
&=&C\int_{U}\varphi (x)E[\Phi (X_{T}^{x})\int_{0}^{T}u^{-H-\frac{1}{2}} \\
&&\times \int_{u}^{T}a(s-u)(s-u)^{\frac{1}{2}-H}s^{H-\frac{1}{2}}\left( 
\frac{\partial }{\partial x}X_{s-u}^{x}\right) ^{\ast }dB_{s}du]^{\ast }dx \\
&&+I_{3}(n),
\end{eqnarray*}%
where%
\begin{eqnarray*}
&&I_{3}(n) \\
&:&=C\int_{U}\varphi (x)E[\Phi (X_{T}^{x})\int_{0}^{T}u^{-H-\frac{1}{2}%
}\int_{u}^{T}a(s-u)(s-u)^{\frac{1}{2}-H}s^{H-\frac{1}{2}} \\
&&\times \{\left( \frac{\partial }{\partial x}X_{s-u}^{x,n}\right) ^{\ast
}-\left( \frac{\partial }{\partial x}X_{s-u}^{x}\right) ^{\ast
}\}dB_{s}du]^{\ast }dx.
\end{eqnarray*}%
It follows from Fubini's theorem, H\"{o}lder's inequality, the It\^{o}
isometry, Lemma \ref{BoundedDerivatives}, Lemma \ref{ConvergenceX} and
dominated convergence that%
\begin{eqnarray*}
&&\left\Vert I_{1}(n)\right\Vert \\
&\leq &\left\Vert \varphi \right\Vert _{\infty }\int_{U}(E[\left\vert \Phi
(X_{T}^{x,n})-\Phi (X_{T}^{x})\right\vert ^{2}])^{1/2} \\
&&\times (\int_{0}^{T}s^{2H-1}E[(\int_{0}^{s}u^{-H-\frac{1}{2}}\left\vert
a(s-u)\right\vert (s-u)^{\frac{1}{2}-H}\left\Vert \frac{\partial }{\partial x%
}X_{s-u}^{x,n}\right\Vert du)^{2}]ds)^{1/2}dx \\
&\leq &\left\Vert \varphi \right\Vert _{\infty }\int_{U}(E[\left\vert \Phi
(X_{T}^{x,n})-\Phi (X_{T}^{x})\right\vert ^{2}])^{1/2}(\int_{0}^{T}s^{2H-1}
\\
&&\times \int_{0}^{s}\int_{0}^{s}u_{1}^{-H-\frac{1}{2}}\left\vert
a(s-u_{1})\right\vert (s-u_{1})^{\frac{1}{2}-H}u_{2}^{-H-\frac{1}{2}%
}\left\vert a(s-u_{2})\right\vert (s-u_{2})^{\frac{1}{2}-H} \\
&&\times E[\left\Vert \frac{\partial }{\partial x}X_{s-u_{1}}^{x,n}\right%
\Vert ^{2}]^{1/2}E[\left\Vert \frac{\partial }{\partial x}%
X_{s-u_{2}}^{x,n}\right\Vert ^{2}]^{1/2}du_{1}du_{2}ds)^{1/2}dx \\
&=&\left\Vert \varphi \right\Vert _{\infty }\int_{U}(E[\left\vert \Phi
(X_{T}^{x,n})-\Phi (X_{T}^{x})\right\vert ^{2}])^{1/2}(\int_{0}^{T}s^{2H-1}
\\
&&\times (\int_{0}^{s}u^{-H-\frac{1}{2}}\left\vert a(s-u)\right\vert (s-u)^{%
\frac{1}{2}-H}E[\left\Vert \frac{\partial }{\partial x}X_{s-u}^{x,n}\right%
\Vert ^{2}]^{1/2}du)^{2}ds)^{1/2}dx \\
&\leq &\left\Vert \varphi \right\Vert _{\infty }\int_{U}(E[\left\vert \Phi
(X_{T}^{x,n})-\Phi (X_{T}^{x})\right\vert ^{2}])^{1/2}dx(\int_{0}^{T}s^{2H-1}
\\
&&\times \sup_{n\geq 1}C_{1,2H,d,T}(\left\Vert b_{n}\right\Vert _{L_{\infty
}^{\infty }},\left\Vert b_{n}\right\Vert _{L_{\infty
}^{1}})^{1/4}(\int_{0}^{s}u^{-H-\frac{1}{2}}\left\vert a(s-u)\right\vert
(s-u)^{\frac{1}{2}-H}du)^{2}ds)^{1/2} \\
&\leq &C\left\Vert \varphi \right\Vert _{\infty }\int_{U}(E[\left\vert \Phi
(X_{T}^{x,n})-\Phi (X_{T}^{x})\right\vert ^{2}])^{1/2}dx(\int_{0}^{T}s^{H-%
\frac{1}{2}}ds)^{1/2} \\
&&\underset{n\longrightarrow \infty }{\longrightarrow }0,
\end{eqnarray*}%
where used the boundedness of the function $a$ in the last estimate.

By applying the Clark-Ocone formula (see e.g. \cite{Nualart}) in combination
with It\^{o}'s isometry and the chain rule for the Malliavin derivative, we
see that%
\begin{eqnarray*}
&&I_{3}(n) \\
&:&=C\int_{U}\varphi (x)E[E[\Phi (X_{T}^{x})]\int_{0}^{T}u^{-H-\frac{1}{2}%
}\int_{u}^{T}a(s-u)(s-u)^{\frac{1}{2}-H}s^{H-\frac{1}{2}} \\
&&\times \{\left( \frac{\partial }{\partial x}X_{s-u}^{x,n}\right) ^{\ast
}-\left( \frac{\partial }{\partial x}X_{s-u}^{x}\right) ^{\ast
}\}dB_{s}du]^{\ast }dx \\
&&+C\int_{U}\varphi (x)E[\int_{0}^{T}u^{-H-\frac{1}{2}%
}\int_{u}^{T}a(s-u)(s-u)^{\frac{1}{2}-H}s^{H-\frac{1}{2}}D_{s}\Phi
(X_{T}^{x}) \\
&&\times \{\frac{\partial }{\partial x}X_{s-u}^{x,n}-\frac{\partial }{%
\partial x}X_{s-u}^{x}\}^{\ast }dsdu]^{\ast }dx \\
&=&C\int_{U}\varphi (x)E[\int_{0}^{T}u^{-H-\frac{1}{2}%
}\int_{u}^{T}a(s-u)(s-u)^{\frac{1}{2}-H}s^{H-\frac{1}{2}}\Phi ^{\shortmid
}(X_{T}^{x})D_{s}X_{T}^{x} \\
&&\times \{\frac{\partial }{\partial x}X_{s-u}^{x,n}-\frac{\partial }{%
\partial x}X_{s-u}^{x}\}^{\ast }dsdu]^{\ast }dx.
\end{eqnarray*}%
Then using Lemma \ref{WeakConvergence}, Lemma \ref{BoundMalliavinDerivative}
and dominated convergence in connection with Lemma \ref{BoundedDerivatives},
we find that%
\begin{equation*}
\left\Vert I_{3}(n)\right\Vert \underset{n\longrightarrow \infty }{%
\longrightarrow }0.
\end{equation*}%
Here we mention that $D_{\cdot }X_{T}^{\cdot }$ used above stands for a weak
limit of a subsequence of $D_{\cdot }X_{T}^{\cdot ,n},n\geq 1$ in $%
L^{2}([0,T]\times \Omega \times U)$ such that $D_{\cdot }X_{T}^{x}$ is a
representative of the Malliavin derivative of $X_{T}^{x}$ for almost all $x$
in $U$. The latter however is a consequence of Lemma 1.2.3 in \cite{Nualart}
in connection with Lemma \ref{ConvergenceX}, dominated convergence and the
bound (\ref{D}), which is independent of $x$.

Similarly, we also obtain that%
\begin{equation*}
-\int_{U}E[\Phi (X_{T}^{x,n})]\frac{\partial }{\partial x}\varphi (x)dx%
\underset{n\longrightarrow \infty }{\longrightarrow }\int_{U}E[\Phi
(X_{T}^{x})]\frac{\partial }{\partial x}\varphi (x)dx.
\end{equation*}%
So%
\begin{eqnarray*}
&&-\int_{U}E[\Phi (X_{T}^{x})]\frac{\partial }{\partial x}\varphi (x)dx \\
&=&C\int_{U}\varphi (x)E[\Phi (X_{T}^{x}) \\
&&\times \int_{0}^{T}u^{-H-\frac{1}{2}}\int_{u}^{T}a(s-u)(s-u)^{\frac{1}{2}%
-H}s^{H-\frac{1}{2}}\left( \frac{\partial }{\partial x}X_{s-u}^{x}\right)
^{\ast }dB_{s}ds]^{\ast }dx
\end{eqnarray*}

Finally, we can apply the monotone class theorem in connection with
dominated convergence and the Cauchy-Schwarz inequality and verify the
latter relation for Borel measurable functions $\Phi :\mathbb{R}%
^{d}\longrightarrow \mathbb{R}$ such that 
\begin{equation*}
\Phi (X_{T}^{\cdot })\in L^{2}(\Omega \times U,\mu \times dx).
\end{equation*}%
Hence the result follows.
\end{proof}

\bigskip

In financial applications the right hand side of relation (\ref{BEL}), say $%
M $ may be interpreted as a sensitivity measure- known as delta- for changes
of the fair value of an option with payoff function $\Phi $ and underlying $%
d $ stock price processes $X_{\cdot }^{x}$ (under a change of measure) with
respect to the initial prices $x\in \mathbb{R}^{d}$ of the stocks. The
quantity $M$, is a priori for $H<\frac{1}{2(d+2)}$ only defined for \emph{%
almost all} initial values $x$. In practice, however, where a trader is
interested in sensitivities with respect to specific initial prices of the
stocks, the choice of $M$ as a sensitivity measure would be not
satisfactory. On the other hand, in order to make sense of $M$ as a delta
for \emph{all} $x\in \mathbb{R}^{d}$, one can in fact choose a version of $M$%
, which is continuous and hence defined for all $x$. It turns out, however
that such a version of $M$ exists, if the Hurst parameter is allowed to be a
little bit smaller than in Theorem \ref{Bismut}, that is $H<$ $\frac{1}{%
2(d+3)}.$ See Proposition \ref{ContinuousBEL}, whose proof requires the
following new estimate, which is based on Theorem \ref{mainestimate2}:

\begin{lemma}
\bigskip \label{SecondBoundedDerivative}Let $b\in C_{c}^{\infty
}((0,T)\times \mathbb{R}^{d}).$ Fix integers $p\geq 2$. Then, if $H<\frac{1}{%
2(d+3)},$ we have 
\begin{equation*}
\sup_{x\in \mathbb{R}^{d}}E[\left\Vert \frac{\partial ^{2}}{\partial x^{2}}%
X_{t}^{s,x}\right\Vert ^{p}]\leq C_{p,H,d,T}(\left\Vert b\right\Vert
_{L_{\infty }^{\infty }},\left\Vert b\right\Vert _{L_{\infty }^{1}})<\infty
\end{equation*}%
for some continuous function $C_{p,H,d,T}:[0,\infty )^{2}\longrightarrow
\lbrack 0,\infty )$.
\end{lemma}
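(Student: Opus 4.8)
The plan is to mirror the strategy used in \cite{BNP} for the first-order bound of Lemma \ref{BoundedDerivatives}, now applied to the second spatial derivative. Since $b\in C_{c}^{\infty}((0,T)\times\mathbb{R}^{d})$, the solution flow $x\mapsto X_{t}^{s,x}$ is almost surely $C^{\infty}$ (Kunita), so $\frac{\partial^{2}}{\partial x^{2}}X_{t}^{s,x}$ exists, and differentiating the SDE (\ref{SDE}) twice in $x$ gives the second variational equation
\begin{equation*}
\frac{\partial^{2}}{\partial x^{2}}X_{t}^{s,x}=\int_{s}^{t}Db(r,X_{r}^{s,x})\,\frac{\partial^{2}}{\partial x^{2}}X_{r}^{s,x}\,dr+\int_{s}^{t}D^{2}b(r,X_{r}^{s,x})\Big(\frac{\partial}{\partial x}X_{r}^{s,x},\frac{\partial}{\partial x}X_{r}^{s,x}\Big)\,dr,
\end{equation*}
where $Db$ and $D^{2}b$ denote the first and second spatial derivatives of $b$. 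Solving this linear equation by variation of constants expresses $\frac{\partial^{2}}{\partial x^{2}}X_{t}^{s,x}$ as a single time integral of $D^{2}b$, weighted by the homogeneous fundamental solution and contracted against two copies of the first-order flow derivative $\frac{\partial}{\partial x}X_{r}^{s,x}$, the latter already being under control by Lemma \ref{BoundedDerivatives}.

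The key difficulty, and the reason a direct Gr\"onwall estimate is useless, is that the bound must be \emph{independent} of $\Vert Db\Vert_{\infty}$ and $\Vert D^{2}b\Vert_{\infty}$, depending only on $\Vert b\Vert_{L_{\infty}^{\infty}}$ and $\Vert b\Vert_{L_{\infty}^{1}}$, so that it survives the approximation $b_{n}\to b$ for merely $L_{\infty,\infty}^{1,\infty}$ drifts. I would therefore pass to the law of the driving fractional Brownian motion via Girsanov's theorem, rewriting $E[\Vert\frac{\partial^{2}}{\partial x^{2}}X_{t}^{s,x}\Vert^{p}]$ as an expectation in which the path of $X$ is replaced by $x+B_{\cdot}^{H}$ and a Girsanov weight (Dol\'eans--Dade exponential) appears. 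Expanding the resulting expression together with the Girsanov density, and repeatedly invoking the local-time/Malliavin integration-by-parts device of \cite{BNP}, \cite{ABP}, every derivative falling on $b$ can be transferred onto the Gaussian density of the increments of $B^{H}$. This turns products of $Db$ and $D^{2}b$ into products of $b$ itself multiplied by derivatives of transition densities, leaving iterated integrals over simplices $\{s<r_{1}<\cdots<r_{m}<t\}$ whose integrands are products of kernel-type weights and Gaussian density factors.

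The final step is to bound these simplex integrals. Here I would invoke Theorem \ref{mainestimate2} to estimate each iterated integral, and a shuffle (combinatorial) argument together with the resulting factorial decay in $m$ to sum the series, obtaining a finite bound of the claimed form $C_{p,H,d,T}(\Vert b\Vert_{L_{\infty}^{\infty}},\Vert b\Vert_{L_{\infty}^{1}})$, uniformly in $x\in\mathbb{R}^{d}$.

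The main obstacle, and the precise point where the Hurst threshold tightens from $H<\frac{1}{2(d+2)}$ (needed for the first derivative in Lemma \ref{BoundedDerivatives}) to $H<\frac{1}{2(d+3)}$, is that the second variation forces one additional integration by parts against the density per term, producing one extra order of singularity in the auxiliary time variable. The integrability of these singular weights, i.e.\ the convergence of the governing integrals supplied by Theorem \ref{mainestimate2} and the summability of the associated series, holds only when $H$ is smaller by exactly this increment, which is what produces the extra ``$+1$'' in the denominator $d+3$.
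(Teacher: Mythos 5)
Your proposal follows essentially the same route as the paper's proof: expand the flow derivatives into iterated simplex integrals (the paper does this by differentiating the Picard series of the first variational equation, which is exactly what your variation-of-constants formula becomes once the fundamental solution and $\frac{\partial }{\partial x}X_{\cdot }^{s,x}$ are themselves expanded), remove the drift from the flow via Girsanov's theorem and Lemma \ref{Novikov} so that only $\left\Vert b\right\Vert _{L_{\infty }^{\infty }}$ and $\left\Vert b\right\Vert _{L_{\infty }^{1}}$ survive, then bound each simplex integral by Theorem \ref{mainestimate2} after shuffle manipulations and sum the resulting series under $H<\frac{1}{2(d+3)}$. Your diagnosis of the threshold --- exactly one second-order derivative factor per product, tightening $d+2$ to $d+3$ --- matches the paper's bookkeeping via Lemma \ref{OrderDerivatives}; the only detail you gloss over is the dyadic-power device ($cp=2^{q}$, followed by re-shuffling the $2^{q}$-th power into single simplex integrals) needed to reduce $p$-th moments to the first-moment estimate that Theorem \ref{mainestimate2} actually provides.
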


\begin{proof}
Since \ the stochastic flow associated with the smooth vector field $b$ is
smooth, too (compare to e.g. \cite{Kunita}), we obtain that%
\begin{equation}
\frac{\partial }{\partial x}X_{t}^{s,x}=I_{d\times
d}+\int_{s}^{t}Db(u,X_{u}^{s,x})\frac{\partial }{\partial x}X_{u}^{s,x}du,
\label{InitialDerivative}
\end{equation}%
where $Db:\mathbb{R}^{d}\longrightarrow L(\mathbb{R}^{d},\mathbb{R}^{d})$ is
the derivative of $b$ with respect to the space variable.

Using Picard iteration, we see that%
\begin{equation}
\frac{\partial }{\partial x}X_{t}^{s,x}=I_{d\times d}+\sum_{m\geq
1}\int_{\Delta
_{s,t}^{m}}Db(u_{1},X_{u_{1}}^{s,x})...Db(u_{m},X_{u_{m}}^{s,x})du_{m}...du_{1},
\label{FirstOrderExpansion}
\end{equation}%
where%
\begin{equation*}
\Delta _{s,t}^{m}=\{(u_{m},...u_{1})\in \lbrack 0,T]^{m}:\theta
<u_{m}<...<u_{1}<t\}.
\end{equation*}

By using dominated convergence, we can differentiate both sides with respect
to $x$ and get that%
\begin{equation*}
\frac{\partial ^{2}}{\partial x^{2}}X_{t}^{s,x}=\sum_{m\geq 1}\int_{\Delta
_{s,t}^{m}}\frac{\partial }{\partial x}%
[Db(u_{1},X_{u_{1}}^{s,x})...Db(u_{m},X_{u_{m}}^{s,x})]du_{m}...du_{1}.
\end{equation*}%
Then application of the Leibniz and chain rule yields%
\begin{eqnarray*}
&&\frac{\partial }{\partial x}%
[Db(u_{1},X_{u_{1}}^{s,x})...Db(u_{m},X_{u_{m}}^{s,x})] \\
&=&\sum_{r=1}^{m}Db(u_{1},X_{u_{1}}^{s,x})...D^{2}b(u_{r},X_{u_{r}}^{s,x})%
\frac{\partial }{\partial x}X_{u_{r}}^{s,x}...Db(u_{m},X_{u_{m}}^{s,x}),
\end{eqnarray*}%
where $D^{2}b=D(Db):\mathbb{R}^{d}\longrightarrow L(\mathbb{R}^{d},L(\mathbb{%
R}^{d},\mathbb{R}^{d}))$.

So it follows from (\ref{FirstOrderExpansion}) that%
\begin{eqnarray}
\frac{\partial ^{2}}{\partial x^{2}}X_{t}^{s,x} &=&\sum_{m_{1}\geq
1}\int_{\Delta
_{s,t}^{m_{1}}}%
\sum_{r=1}^{m_{1}}Db(u_{1},X_{u_{1}}^{s,x})...D^{2}b(u_{r},X_{u_{r}}^{s,x}) 
\notag \\
&&\times \left( I_{d\times d}+\sum_{m_{2}\geq 1}\int_{\Delta
_{s,u_{r}}^{m_{2}}}Db(v_{1},X_{v_{1}}^{s,x})...Db(v_{m_{2}},X_{v_{m_{2}}}^{s,x})dv_{m_{2}}...dv_{1}\right)
\notag \\
&&\times
Db(u_{r+1},X_{u_{r+1}}^{s,x})...Db(u_{m_{1}},X_{u_{m_{1}}}^{s,x})du_{m_{1}}...du_{1}
\notag \\
&=&\sum_{m_{1}\geq 1}\sum_{r=1}^{m_{1}}\int_{\Delta
_{s,t}^{m_{1}}}Db(u_{1},X_{u_{1}}^{s,x})...D^{2}b(u_{r},X_{u_{r}}^{s,x})...Db(u_{m_{1}},X_{u_{m_{1}}}^{s,x})du_{m_{1}}...du_{1}
\notag \\
&&+\sum_{m_{1}\geq 1}\sum_{r=1}^{m_{1}}\sum_{m_{2}\geq 1}\int_{\Delta
_{s,t}^{m_{1}}}\int_{\Delta
_{s,u_{r}}^{m_{2}}}Db(u_{1},X_{u_{1}}^{s,x})...D^{2}b(u_{r},X_{u_{r}}^{s,x})
\notag \\
&&\times
Db(v_{1},X_{v_{1}}^{s,x})...Db(v_{m_{2}},X_{v_{m_{2}}}^{s,x})Db(u_{r+1},X_{u_{r+1}}^{s,x})...Db(u_{m_{1}},X_{u_{m_{1}}}^{s,x})
\notag \\
&&dv_{m_{2}}...dv_{1}du_{m_{1}}...du_{1}  \notag \\
&=&:I_{1}+I_{2}.  \label{SecondOrderExpansion}
\end{eqnarray}

We now aim at applying Lemma \ref{partialshuffle} to the term $I_{2}$ in (%
\ref{SecondOrderExpansion}) and find that%
\begin{equation}
I_{2}=\sum_{m_{1}\geq 1}\sum_{r=1}^{m_{1}}\sum_{m_{2}\geq 1}\int_{\Delta
_{s,t}^{m_{1}+m_{2}}}\mathcal{H}%
_{m_{1}+m_{2}}^{X}(u)du_{m_{1}+m_{2}}...du_{1}  \label{I2}
\end{equation}%
for $u=(u_{1},...,u_{m_{1}+m_{2}}),$ where the integrand $\mathcal{H}%
_{m_{1}+m_{2}}^{X}(u)\in \mathbb{R}^{d}\otimes \mathbb{R}^{d}\otimes \mathbb{%
R}^{d}$ possesses entries given by sums of at most $C(d)^{m_{1}+m_{2}}$
summands, which are products of length $m_{1}+m_{2}$ of functions belonging
to the class%
\begin{equation*}
\left\{ \frac{\partial ^{j}}{\partial x_{l_{1}}\partial x_{l_{j}}}%
b^{(i)}(u,X_{u}^{s,x}),j=1,2,l_{1},l_{2},i=1,...,d\right\} .
\end{equation*}%
Here it is crucial to mention that second order derivatives of functions in
those products of functions on $\Delta _{s,t}^{m_{1}+m_{2}}$ in (\ref{I2} )
only appear once. Thus the absolute value of the multi-index $\alpha $ with
respect to the total order of derivatives of those products of functions in
connection with Proposition \ref{OrderDerivatives} in the Appendix is given
by%
\begin{equation}
\left\vert \alpha \right\vert =m_{1}+m_{2}+1.  \label{alpha}
\end{equation}%
We now choose $p,c,r\in \lbrack 1,\infty )$ such that $cp=2^{q}$ for some
integer $q$ and $\frac{1}{r}+\frac{1}{c}=1.$ Then we can employ H\"{o}lder's
inequality and Girsanov's theorem (Theorem \ref{girsanov}) in combination
with Lemma \ref{Novikov} in the Appendix and get that%
\begin{eqnarray}
&&E[\left\Vert I_{2}\right\Vert ^{p}]  \notag \\
&\leq &C(\left\Vert b\right\Vert _{L_{\infty }^{\infty }})\left(
\sum_{m_{1}\geq 1}\sum_{r=1}^{m_{1}}\sum_{m_{2}\geq 1}\sum_{i\in
I}\left\Vert \int_{\Delta _{s,t}^{m_{1}+m_{2}}}\mathcal{H}%
_{i}^{B^{H}}(u)du_{m_{1}+m_{2}}...du_{1}\right\Vert _{L^{2^{q}}(\Omega ;%
\mathbb{R})}\right) ^{p},  \label{lp}
\end{eqnarray}%
where $C:[0,\infty )\longrightarrow \lbrack 0,\infty )$ is a continuous
function. Here $\#I\leq K^{m_{1}+m_{2}}$ for a constant $K=K(d)$ and the
integrands $\mathcal{H}_{i}^{B^{H}}(u)$ are of the form 
\begin{equation*}
\mathcal{H}_{i}^{B^{H}}(u)=\dprod%
\limits_{l=1}^{m_{1}+m_{2}}h_{l}(u_{l}),h_{l}\in \Lambda ,l=1,...,m_{1}+m_{2}
\end{equation*}%
where 
\begin{equation*}
\Lambda :=\left\{ \frac{\partial ^{j}}{\partial x_{l_{1}}\partial x_{l_{j}}}%
b^{(i)}(u,x+B_{u}^{H}),j=1,2,l_{1},l_{2},i=1,...,d\right\} .
\end{equation*}%
Also in this case functions with second order derivatives only appear once
in those products.

Define 
\begin{equation*}
J=\left( \int_{\Delta _{s,t}^{m_{1}+m_{2}}}\mathcal{H}%
_{i}^{B^{H}}(u)du_{m_{1}+m_{2}}...du_{1}\right) ^{2^{q}}.
\end{equation*}%
Using againLemma \ref{partialshuffle} in the Appendix, successively, we
obtain that $J$ can be written as a sum of, at most of length $%
K(q)^{m_{1}+m_{2}}$ with summands of the form%
\begin{equation}
\int_{\Delta
_{s,t}^{2^{q}(m_{1}+m_{2})}}\dprod%
\limits_{l=1}^{2^{q}(m_{1}+m_{2})}f_{l}(u_{l})du_{2^{q}(m_{1}+m_{2})}...du_{1},
\label{f}
\end{equation}%
where $f_{l}\in \Lambda $ for all $l$.

Here the number of factors $f_{l}$ in the above product, which have a second
order derivative, is exactly $2^{q}.$ Thus the total order of the
derivatives involved in (\ref{f}) in connection with Proposition \ref%
{OrderDerivatives} is given by%
\begin{equation}
\left\vert \alpha \right\vert =2^{q}(m_{1}+m_{2}+1).  \label{alpha2}
\end{equation}

We can now invoke Theorem \ref{mainestimate2} for $m=2^{q}(m_{1}+m_{2})$ and 
$\varepsilon _{j}=0$ and find that%
\begin{eqnarray*}
&&\left\vert E\left[ \int_{\Delta
_{s,t}^{2^{q}(m_{1}+m_{2})}}\dprod%
\limits_{l=1}^{2^{q}(m_{1}+m_{2})}f_{l}(u_{l})du_{2^{q}(m_{1}+m_{2})}...du_{1}%
\right] \right\vert \\
&\leq &C^{m_{1}+m_{2}}(\left\Vert b\right\Vert _{L^{1}(\mathbb{R}%
^{d};L^{\infty }([0,T];\mathbb{R}^{d}))})^{2^{q}(m_{1}+m_{2})} \\
&&\times \frac{((2(2^{q}(m_{1}+m_{2}+1))!)^{1/4}}{\Gamma
(-H(2d2^{q}(m_{1}+m_{2})+42^{q}(m_{1}+m_{2}+1))+22^{q}(m_{1}+m_{2}))^{1/2}}
\end{eqnarray*}%
for a constant $C$ depending on $H,T,d$ and $q$.

So the latter combined with (\ref{lp}) shows that%
\begin{eqnarray*}
&&E[\left\Vert I_{2}\right\Vert ^{p}] \\
&\leq &C(\left\Vert b\right\Vert _{L_{\infty }^{\infty }})(\sum_{m_{1}\geq
1}\sum_{m_{2}\geq 1}K^{m_{1}+m_{2}}((\left\Vert b\right\Vert _{L^{1}(\mathbb{%
R}^{d};L^{\infty }([0,T];\mathbb{R}^{d}))})^{2^{q}(m_{1}+m_{2})} \\
&&\times \frac{((2(2^{q}(m_{1}+m_{2}+1))!)^{1/4}}{\Gamma
(-H(2d2^{q}(m_{1}+m_{2})+42^{q}(m_{1}+m_{2}+1))+22^{q}(m_{1}+m_{2}))^{1/2}}%
)^{1/2^{q}})^{p}
\end{eqnarray*}%
for a constant $K$ depending on $H,T,d,p$ and $q$.

Since $\frac{1}{2(d+3)}\leq \frac{1}{2(d+2\frac{m_{1}+m_{2}+1}{m_{1}+m_{2}})}
$ for $m_{1},m_{2}\geq 1$, the above sum converges, when $H<\frac{1}{2(d+3)}$%
.

Further, one establishes in the same way a similar estimate for $%
E[\left\Vert I_{1}\right\Vert ^{p}]$. Altogether, the proof follows.
\end{proof}

Using Lemma \ref{SecondBoundedDerivative}, we can obtain the following
result:

\begin{theorem}
\label{2Sobolev}\bigskip Let $b\in L_{\infty ,\infty }^{1,\infty }$, $H<%
\frac{1}{2(d+3)}$ and $U\subset \mathbb{R}^{d}$ a bounded and open set. Then
for all $0\leq t\leq T$ we have that 
\begin{equation*}
X_{t}^{\cdot }\in \dbigcap\limits_{p\geq 2}L^{2}(\Omega ;W^{2,p}(U))\text{.}
\end{equation*}%
In particular, for all $0\leq t\leq T$ there exists a $\Omega ^{\ast }$ with 
$\mu (\Omega ^{\ast })=1$ such that for all $\omega \in \Omega ^{\ast }$ $%
(x\longmapsto X_{t}^{x}(\omega ))$ has a continuous version on $U$.
\end{theorem}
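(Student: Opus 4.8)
The plan is to mirror the argument already used for the first-order conclusion of Theorem \ref{StrongSolution}, replacing the first-order derivative bound of Lemma \ref{BoundedDerivatives} by the second-order bound of Lemma \ref{SecondBoundedDerivative}, and then to obtain the continuous version by a Sobolev embedding. Fix $p\geq 2$ and approximate $b\in L_{\infty ,\infty }^{1,\infty }$ by functions $b_{n}\in C_{c}^{\infty }((0,T)\times \mathbb{R}^{d})$ satisfying (\ref{C1}), (\ref{C2}), (\ref{C3}), and let $X_{t}^{x,n}$ be the associated strong solutions, which are smooth in $x$ since the $b_{n}$ are smooth. The uniform bound on the drifts gives $\sup_{n}\sup_{x\in U}E[\Vert X_{t}^{x,n}\Vert ^{p}]<\infty $, while Lemma \ref{BoundedDerivatives} and Lemma \ref{SecondBoundedDerivative} give $\sup_{n}\sup_{x}E[\Vert \frac{\partial }{\partial x}X_{t}^{x,n}\Vert ^{p}]<\infty $ and $\sup_{n}\sup_{x}E[\Vert \frac{\partial ^{2}}{\partial x^{2}}X_{t}^{x,n}\Vert ^{p}]<\infty $.

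Integrating these pointwise-in-$x$ estimates over the bounded set $U$ by Fubini shows that $X_{t}^{\cdot ,n}$, $\frac{\partial }{\partial x}X_{t}^{\cdot ,n}$ and $\frac{\partial ^{2}}{\partial x^{2}}X_{t}^{\cdot ,n}$ are bounded in $L^{p}(\Omega \times U)$ uniformly in $n$. Because $p\geq 2$ yields $2/p\leq 1$, Jensen's inequality applied to the $x$-integral upgrades this to a uniform bound on $E[\Vert X_{t}^{\cdot ,n}\Vert _{W^{2,p}(U)}^{2}]$; that is, $(X_{t}^{\cdot ,n})_{n}$ is bounded in the reflexive Banach space $L^{2}(\Omega ;W^{2,p}(U))$. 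By reflexivity I would then extract a subsequence converging weakly in $L^{2}(\Omega ;W^{2,p}(U))$ to some limit $\xi $, which therefore lies in $L^{2}(\Omega ;W^{2,p}(U))$ and whose first and second Sobolev derivatives are the weak limits of the corresponding derivatives of the $X_{t}^{\cdot ,n}$.

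It remains to identify $\xi $ with $X_{t}^{\cdot }$. Proposition \ref{ConvergenceX} gives $X_{t}^{x,n}\to X_{t}^{x}$ in $L^{2}(\Omega )$ for every $x$; combined with the uniform $L^{2}(\Omega )$ bound and dominated convergence over the finite-measure set $U$, this yields strong convergence $X_{t}^{\cdot ,n}\to X_{t}^{\cdot }$ in $L^{2}(\Omega \times U)$. Since, for $p\geq 2$, the embedding $L^{2}(\Omega ;W^{2,p}(U))\hookrightarrow L^{2}(\Omega \times U)$ is continuous, weak convergence of the subsequence passes to weak convergence in $L^{2}(\Omega \times U)$, whose limit must agree with the strong limit; hence $\xi =X_{t}^{\cdot }$ and $X_{t}^{\cdot }\in L^{2}(\Omega ;W^{2,p}(U))$. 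As $p\geq 2$ was arbitrary, $X_{t}^{\cdot }\in \bigcap_{p\geq 2}L^{2}(\Omega ;W^{2,p}(U))$. Finally, for the continuous version I would invoke the Sobolev embedding theorem: choosing $p>d/2$ (admissible, since every $p\geq 2$ is available), $W^{2,p}(U)$ embeds into $C(U)$, so on the full-measure event $\Omega ^{\ast }=\{\omega :X_{t}^{\cdot }(\omega )\in W^{2,p}(U)\}$ the map $x\longmapsto X_{t}^{x}(\omega )$ admits a continuous representative.

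The genuinely hard analytic content is packaged into Lemma \ref{SecondBoundedDerivative}, so within this theorem I expect the delicate point to be the identification step: confirming that the weak $L^{2}(\Omega ;W^{2,p}(U))$ limit of the regularized flows is exactly $X_{t}^{\cdot }$ and that the weak limits of $\frac{\partial }{\partial x}X_{t}^{\cdot ,n}$ and $\frac{\partial ^{2}}{\partial x^{2}}X_{t}^{\cdot ,n}$ are the true Sobolev derivatives of $X_{t}^{\cdot }$, which relies on the closedness of the distributional-derivative operator under weak limits together with the strong $L^{2}(\Omega \times U)$ convergence supplied by Proposition \ref{ConvergenceX}.
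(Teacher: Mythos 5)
Your proof is correct, and its overall skeleton coincides with the paper's: approximate $b$ by smooth $b_n$, combine Lemma \ref{BoundedDerivatives} and Lemma \ref{SecondBoundedDerivative} with Jensen's inequality to get a bound on $\sup_n\Vert X_t^{\cdot,n}\Vert_{L^2(\Omega;W^{2,p}(U))}$, extract a weak limit by reflexivity, and identify it with $X_t^\cdot$. Where you genuinely diverge is the identification step. The paper obtains strong $L^2(\Omega)$ convergence of the pairings $\langle X_t^{\cdot,n_j},\phi\rangle$ by invoking a compactness criterion for square-integrable Wiener functionals (following Proposition 4.2 of \cite{MMNPZ}, via \cite{DMN}, combined with Lemma 5.6 of \cite{BLPP}), and then identifies the weak $W^{2,p}$-limit $Y$ by testing against $1_A\,\partial^\alpha\phi$, $|\alpha|\le 2$, and integrating by parts. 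You instead upgrade the pointwise-in-$x$ convergence of Proposition \ref{ConvergenceX} to strong convergence in $L^2(\Omega\times U)$ via the uniform second-moment bound and dominated convergence on the bounded set $U$, and then use that the embedding $L^2(\Omega;W^{2,p}(U))\hookrightarrow L^2(\Omega\times U)$ is continuous (hence weakly continuous), so the weak and strong limits must agree. Your route is more elementary and self-contained: it uses only results already stated in the paper and dispenses entirely with the external compactness machinery, which in the paper is arguably a vestige of \cite{MMNPZ}, where no analogue of Proposition \ref{ConvergenceX} was available and compactness had to manufacture the convergent subsequence. The paper's heavier argument buys robustness in settings lacking such a pointwise convergence result, but here your shortcut is legitimate. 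Two small points: the domination needed for your $L^2(\Omega\times U)$ convergence does hold, since condition (\ref{C3}) makes the drifts uniformly bounded and hence $\sup_n\sup_{x\in U}E[\Vert X_t^{x,n}-X_t^x\Vert^2]<\infty$; and in the final Sobolev-embedding step, since $U$ is an arbitrary bounded open set one should argue locally (on balls compactly contained in $U$, where $W^{2,p}\hookrightarrow C$ for $p>d/2$) to get the continuous version on $U$ --- a refinement the paper itself leaves implicit, as its proof stops at $X_t^\cdot\in L^2(\Omega;W^{2,p}(U))$.
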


\begin{proof}
Following the ideas of Proposition 4.2 in \cite{MMNPZ}, we approximate $b$
by a sequence of vector fields $b_{n}\in C_{c}^{\infty }((0,T)\times \mathbb{%
R}^{d}),n\geq 1$ in the sense of the conditions (\ref{C1}), (\ref{C2}), (\ref%
{C3}). Let $X_{\cdot }^{x,n},n\geq 1$ be the sequence of strong solutions to
(\ref{SDE}) associated with those functions. Let $\phi \in C_{c}^{\infty }(U;%
\mathbb{R}^{d})$ and define for fixed $t\in \lbrack 0,T]$ the sequence of
random variables%
\begin{equation*}
\left\langle X_{t}^{\cdot ,n},\phi \right\rangle :=\int_{U}\left\langle
X_{t}^{x,n},\phi \right\rangle _{\mathbb{R}^{d}}dx,n\geq 1
\end{equation*}%
By invoking similar arguments as in the proof of Proposition 4.2 \cite[%
Proposition 4.2]{MMNPZ}, which relies on a compactness criterion for square
integrable funtionals of Wiener processes (see \cite{DMN}), in combination
with the estimates of Lemma 5.6 in \cite{BLPP} one proves that there exists
a subsequence $n_{j},j\geq 1$ such that%
\begin{equation}
\left\langle X_{t}^{\cdot ,n_{j}},\phi \right\rangle \underset{%
j\longrightarrow \infty }{\longrightarrow }\left\langle X_{t}^{\cdot },\phi
\right\rangle  \label{nj}
\end{equation}%
in $L^{2}(\Omega )$ strongly for all $\phi \in C_{c}^{\infty }(U;\mathbb{R}%
^{d})$, where $X_{s}^{x},0\leq s\leq T$ is the strong solution of Theorem %
\ref{StrongSolution}. Note that we also have from Proposition \ref%
{ConvergenceX} that%
\begin{equation*}
X_{t}^{x,n}\underset{n\longrightarrow \infty }{\longrightarrow }X_{t}^{x}
\end{equation*}%
in $L^{2}(\Omega )$ strongly.

Further, one gets from Lemma \ref{SecondBoundedDerivative} that%
\begin{eqnarray*}
&&\sup_{n\geq 1}\left\Vert X_{t}^{\cdot ,n}\right\Vert _{L^{2}(\Omega
;W^{2,p}(U))}^{2} \\
&\leq &\sum_{i=0}^{2}(\int_{\mathcal{U}}\sup_{n\geq 1}E\left[ \left\Vert
D^{i}X_{t}^{x,n}\right\Vert ^{p}\right] dx)^{\frac{2}{p}}<\infty
\end{eqnarray*}%
for $H<\frac{1}{2(d+3)}.$

On the other hand, we know that $L^{2}(\Omega ;W^{2,p}(U))$ is a reflexive
space for $p>1.$ Hence there exists a subsequence $n_{j},j\geq 1$ such that%
\begin{equation*}
X_{t}^{\cdot ,n}\underset{j\longrightarrow \infty }{\longrightarrow }Y
\end{equation*}%
in $L^{2}(\Omega ;W^{2,p}(U))$ weakly. For simplicity, suppose $n_{j},j\geq
1 $ coincides with the subsequence in (\ref{nj}). In addition, we obtain for
all $A\in \mathcal{F},$ $\phi \in C_{c}^{\infty }(U;\mathbb{R}^{d}),\alpha
^{(1)}+...+\alpha ^{(d)}\leq 2$ with $\alpha ^{(i)}\in \mathbb{N}%
_{0},i=1,...,d$ that%
\begin{eqnarray*}
&&E\left[ 1_{A}\left\langle X_{t}^{\cdot ,n_{j}},\frac{\partial ^{\alpha
^{(1)}+...+\alpha ^{(d)}}}{\partial ^{\alpha ^{(1)}}x_{1}...\partial
^{\alpha ^{(d)}}x_{d}}\phi \right\rangle \right] \\
&=&(-1)^{\alpha ^{(1)}+...+\alpha ^{(d)}}E\left[ 1_{A}\left\langle \frac{%
\partial ^{\alpha ^{(1)}+...+\alpha ^{(d)}}}{\partial ^{\alpha
^{(1)}}x_{1}...\partial ^{\alpha ^{(d)}}x_{1}}X_{t}^{\cdot ,n_{j}},\phi
\right\rangle \right] \\
&&\underset{j\longrightarrow \infty }{\longrightarrow }(-1)^{\alpha
^{(1)}+...+\alpha ^{(d)}}E\left[ 1_{A}\left\langle \frac{\partial ^{\alpha
^{(1)}+...+\alpha ^{(d)}}}{\partial ^{\alpha ^{(1)}}x_{1}...\partial
^{\alpha ^{(d)}}x_{1}}Y,\phi \right\rangle \right] .
\end{eqnarray*}%
On the other hand (\ref{nj}) also implies that%
\begin{eqnarray*}
&&E\left[ 1_{A}\left\langle X_{t}^{\cdot ,n_{j}},\frac{\partial ^{\alpha
^{(1)}+...+\alpha ^{(d)}}}{\partial ^{\alpha ^{(1)}}x_{1}...\partial
^{\alpha ^{(d)}}x_{d}}\phi \right\rangle \right] \\
&&\underset{j\longrightarrow \infty }{\longrightarrow }E\left[
1_{A}\left\langle X_{t}^{\cdot },\frac{\partial ^{\alpha ^{(1)}+...+\alpha
^{(d)}}}{\partial ^{\alpha ^{(1)}}x_{1}...\partial ^{\alpha ^{(d)}}x_{d}}%
\phi \right\rangle \right] .
\end{eqnarray*}%
Hence $X_{t}^{\cdot }\in L^{2}(\Omega ;W^{2,p}(U))$ for all $p\geq 2$.
\end{proof}

\bigskip

Denoting by $M=M(x),x\in U$ the right hand side of relation (\ref{BEL}), we
prove that $M$ possesses a continuous version:

\begin{proposition}
\label{ContinuousBEL}Retain the conditions of Theorem \ref{2Sobolev}. Let $%
p>\max (d,4)$ and $\Phi :\mathbb{R}^{d}\longrightarrow \mathbb{R}$ be a
bounded continuous function. Then $M$ has a continuous version on $U$, which
is obtained by replacing in $M$ on the right hand side of (\ref{BEL}) the
derivative of the flow by a predictable version $\{Y_{t}^{\cdot },0\leq
t\leq T\}\in $ $L^{2}([0,T]\times \Omega ,dt\times \mu ;W^{1,p}(U))$ with $%
Y_{t}^{\cdot }(\omega )\in C(U)$ for all $(t,\omega )$.
\end{proposition}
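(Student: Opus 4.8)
The plan is to exhibit an explicit continuous representative $\tilde M$ of $M$ and to verify that it coincides with $M$ almost everywhere. First I would produce the good version $Y$ of the flow derivative. By Theorem \ref{2Sobolev} we have $X_t^\cdot \in L^2(\Omega; W^{2,p}(U))$ for every $p \geq 2$, so the spatial Sobolev derivative satisfies $\partial_x X_t^\cdot \in L^2(\Omega; W^{1,p}(U))$. Since $p > d$, Morrey's embedding $W^{1,p}(U) \hookrightarrow C^{0,1-d/p}(\overline U)$ provides, for $\mu$-a.e.\ $(t,\omega)$, a Hölder-continuous-in-$x$ representative; taking this representative together with the predictable representative furnished by the Remark following Theorem \ref{Bismut} yields a process $Y_t^\cdot$ that is predictable in $(t,\omega)$, lies in $L^2([0,T]\times\Omega; W^{1,p}(U))$ and satisfies $Y_t^\cdot(\omega) \in C(U)$ for all $(t,\omega)$, with $Y_{s-u}^x = \partial_x X_{s-u}^x$ for a.e.\ $x$. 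Defining $\tilde M(x)$ by substituting $Y$ for $\partial_x X$ on the right-hand side of \eqref{BEL} then gives $\tilde M = M$ a.e., so it remains only to prove $\tilde M \in C(U)$.

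To do this I would write $\pi^x := \int_0^T u^{-H-1/2}\int_u^T a(s-u)(s-u)^{1/2-H} s^{H-1/2} (Y_{s-u}^x)^* \, dB_s \, du$ for the Malliavin weight built from $Y$ and split the increment as
\[
\tilde M(x) - \tilde M(y) = C_H E[(\Phi(X_T^x) - \Phi(X_T^y))\pi^x]^* + C_H E[\Phi(X_T^y)(\pi^x - \pi^y)]^*.
\]
For the first term I would use that $\Phi$ is bounded and continuous and that $x \mapsto X_T^x$ is continuous in $L^2(\Omega)$ (as in Proposition \ref{ConvergenceX}), so that dominated convergence gives $\Phi(X_T^x) \to \Phi(X_T^y)$ in $L^2(\Omega)$; combined with the uniform bound $\sup_x \|\pi^x\|_{L^2(\Omega)} < \infty$, obtained exactly as in the estimate of $I_1(n)$ in the proof of Theorem \ref{Bismut} (Fubini, Itô's isometry, Cauchy--Schwarz and the bound $\sup_x E[\|Y_t^x\|^2] < \infty$ of Lemma \ref{BoundedDerivatives}), Cauchy--Schwarz shows this term tends to $0$ as $y \to x$. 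For the second term I would bound $|\Phi(X_T^y)| \leq \|\Phi\|_\infty$ and apply Fubini and Itô's isometry to reduce matters to controlling
\[
\int_0^T s^{2H-1} E\Big[\Big\|\int_0^s u^{-H-1/2} a(s-u)(s-u)^{1/2-H} (Y_{s-u}^x - Y_{s-u}^y)\, du\Big\|^2\Big]\, ds.
\]

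The heart of the matter is the spatial modulus of continuity of $Y$. Using Morrey's embedding pathwise, $\|Y_t^x(\omega) - Y_t^y(\omega)\| \leq C \|X_t^\cdot(\omega)\|_{W^{2,p}(U)} |x-y|^{1-d/p}$, so squaring, taking expectations and invoking the finiteness of $\|X_t^\cdot\|_{L^2(\Omega;W^{2,p}(U))}$ from Theorem \ref{2Sobolev} (which rests on Lemma \ref{SecondBoundedDerivative} and is valid precisely for $H<\frac{1}{2(d+3)}$) gives an estimate of the form $\sup_t E[\|Y_t^x - Y_t^y\|^2] \leq C|x-y|^{2(1-d/p)}$. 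Inserting this into the displayed integral and checking that the kernel $u^{-H-1/2}(s-u)^{1/2-H}s^{H-1/2}$ is integrable over $\{0 < u < s < T\}$ --- which holds for $H < 1/2$ by the same computation as in Theorem \ref{Bismut} --- shows the second term is $O(|x-y|^{1-d/p})$ and hence vanishes as $y \to x$. The main obstacle I anticipate is exactly this coupling step: matching the $x$-Hölder estimate coming from the second-order flow bound against the time-singular BEL kernel and keeping the temporal singularities integrable after the Cauchy--Schwarz/Minkowski splitting, which is what forces both the stronger restriction $H < \frac{1}{2(d+3)}$ (needed for Lemma \ref{SecondBoundedDerivative}) and the requirement $p > \max(d,4)$.
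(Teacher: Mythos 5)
Your proposal is correct in outline and shares the paper's skeleton: you build the good version of the flow derivative from the $W^{2,p}$-regularity of Theorem \ref{2Sobolev} together with the Sobolev embedding for $p>d$, you use exactly the paper's two-term splitting of $\tilde M(x)-\tilde M(y)$ (one term carrying the increment of $\Phi$, one carrying the increment of the weight), and you reduce both to moment estimates via Fubini and It\^{o}'s isometry against the same integrable BEL kernel. Where you genuinely diverge is in the treatment of the second term. The paper argues \emph{qualitatively}: pointwise convergence $\|Y_t^{x_m}-Y_t^x\|\to 0$ from continuity of the version, upgraded to $E[\|Y_t^{x_m}-Y_t^x\|^2]\to 0$ by uniform integrability, which is where the fourth-moment bound (\ref{Z}) (and hence, in effect, the hypothesis $p>4$) enters, followed by dominated convergence in time using (\ref{Y}). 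You argue \emph{quantitatively}: the Morrey estimate $\|Y_t^x(\omega)-Y_t^y(\omega)\|\leq C\|X_t^{\cdot}(\omega)\|_{W^{2,p}(U)}|x-y|^{1-d/p}$ gives $\sup_t E[\|Y_t^x-Y_t^y\|^2]\leq C|x-y|^{2(1-d/p)}$ and hence an explicit H\"{o}lder modulus for $\tilde M$. This is a nice trade: your route yields a rate of continuity rather than mere continuity, and it never actually uses $p>4$ (only $p>d$); the price is that you need the second moment of the $W^{2,p}(U)$-norm \emph{uniformly in} $t$, which Theorem \ref{2Sobolev} asserts only for each fixed $t$ --- though the uniformity does hold, since the constants in Lemmas \ref{BoundedDerivatives} and \ref{SecondBoundedDerivative} depend on $T$ but not on $t$, so you should say this explicitly.

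Two steps are glossed over and deserve attention, though neither is fatal. First, predictability of $y\mapsto Y_t^y$ for fixed $y$ --- which is what makes the It\^{o} integral defining your weight $\pi^x$ meaningful --- does not follow just by "combining" the continuous representative with the predictable one from the Remark after Theorem \ref{Bismut}: the two versions agree only for a.e.\ $x$ at a.e.\ $(t,\omega)$, so pointwise evaluation at a fixed $y$ needs an argument. The paper handles this by first producing a predictable $W^{1,p}(U)$-valued process and then recovering $Y_t^y$ as a pointwise limit of the predictable processes $\int_U Y_t^x\varsigma(x)\delta_{\epsilon,y}(x)\,dx$. Second, your citation of Proposition \ref{ConvergenceX} for the $L^2(\Omega)$-continuity of $x\mapsto X_T^x$ is off --- that proposition concerns convergence in $n$ of the approximating solutions, not continuity in the initial condition; what you actually need (and what the paper uses) is the a.s.\ continuous version of $x\mapsto X_T^x$ provided by Theorem \ref{2Sobolev}, combined with boundedness and continuity of $\Phi$ and dominated convergence.
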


\begin{proof}
As before denote by $\mathcal{P}$ the predictable $\sigma -$algebra on $%
[0,T]\times \Omega $ with respect to $\{\mathcal{F}_{t}\}_{0\leq t\leq T}.$
Then, by using almost the same proof of Theorem \ref{2Sobolev} combined with
Lemma \ref{SecondBoundedDerivative}, one shows that there exists a $\frac{%
\partial }{\partial x}X_{\cdot }^{\cdot }\in L^{2}([0,T]\times \Omega ,%
\mathcal{P},dt\times \mu ;W^{1,p}(U))$ with $p>\max (d,4)$ such that $\frac{%
\partial }{\partial x}X_{t}^{\cdot }(\omega )$ is the Sobolev derivative of $%
X_{t}^{\cdot }(\omega )$ on $U$ $(t,\omega )-$a.e. So in particular, we see
for $\phi \in L^{\infty }(U;\mathbb{R})$ that 
\begin{equation*}
\int_{U}\frac{\partial }{\partial x}X_{t}^{x}\phi (x)dx,0\leq t\leq T
\end{equation*}%
is a predictable process. Now let us choose a continuous version $%
Y_{t}^{\cdot }(\omega )$ of $\frac{\partial }{\partial x}X_{t}^{\cdot
}(\omega )$ for all $(t,\omega )$ (which exists by a classical Sobolev space
theory and our assumptions). Then the process%
\begin{equation*}
\int_{U}Y_{t}^{x}(\omega )\phi (x)dx,0\leq t\leq T
\end{equation*}%
is predictable, too. Let $\delta _{\epsilon ,y}\in L^{\infty }(\mathbb{R}%
^{d}),\epsilon >0$ be an approximation of the Dirac delta measure in $y\in
U. $ Further, let $V$ be an open and bounded set with $\overline{V}\subset U$
and $y\in V$. In addition, consider a continuous function $\varsigma $ on $U$
with compact support in $U$ such that $\varsigma (x)=1$ for all $x\in V$.
Then 
\begin{equation*}
\int_{U}Y_{t}^{x}(\omega )\varsigma (x)\delta _{\epsilon ,y}(x)dx\underset{%
\epsilon \searrow 0}{\longrightarrow }Y_{t}^{y}(\omega )
\end{equation*}%
for all $(t,\omega )$. So $Y_{t}^{y},0\leq t\leq T$ is a predictable process
for all $y\in U$.

Using It\^{o}'s isometry we find that 
\begin{eqnarray*}
&&\sup_{x\in U}E[\left\Vert \int_{0}^{T}u^{-H-\frac{1}{2}%
}\int_{u}^{T}a(s)(s-u)^{\frac{1}{2}-H}s^{H-\frac{1}{2}}\left(
Y_{s-u}^{x}\right) ^{\ast }dB_{s}du\right\Vert ^{2}] \\
&=&C\sup_{x\in U}E[\int_{0}^{T}\left\Vert \int_{0}^{T}u^{-H-\frac{1}{2}}\chi
_{(u,T)}(s)a(s)(s-u)^{\frac{1}{2}-H}s^{H-\frac{1}{2}}\left(
Y_{s-u}^{x}\right) ^{\ast }du\right\Vert ^{2}ds] \\
&\leq &C\sup_{x\in U}\int_{0}^{T}E[\left( \int_{0}^{T}u^{-H-\frac{1}{2}}\chi
_{(u,T)}(s)\left\vert a(s)\right\vert (s-u)^{\frac{1}{2}-H}s^{H-\frac{1}{2}%
}\left\Vert Y_{s-u}^{x}\right\Vert du\right) ^{2}]ds.
\end{eqnarray*}%
On the other hand, we see that%
\begin{eqnarray}
&&E[\left( \int_{0}^{T}u^{-H-\frac{1}{2}}\chi _{(u,T)}(s)\left\vert
a(s)\right\vert (s-u)^{\frac{1}{2}-H}s^{H-\frac{1}{2}}\left\Vert
Y_{s-u}^{x}\right\Vert du\right) ^{2}  \notag \\
&=&\int_{0}^{T}\int_{0}^{T}u_{1}^{-H-\frac{1}{2}}\chi
_{(u_{1},T)}(s)\left\vert a(s)\right\vert (s-u_{1})^{\frac{1}{2}-H}s^{H-%
\frac{1}{2}}  \notag \\
&&\times u_{2}^{-H-\frac{1}{2}}\chi _{(u_{2},T)}(s)\left\vert
a(s)\right\vert (s-u_{2})^{\frac{1}{2}-H}s^{H-\frac{1}{2}}  \notag \\
&&\times E[\left\Vert Y_{s-u_{1}}^{x}\right\Vert \left\Vert
Y_{s-u_{2}}^{x}\right\Vert ]du_{1}du_{2}  \notag \\
&\leq &\int_{0}^{T}\int_{0}^{T}u_{1}^{-H-\frac{1}{2}}\chi
_{(u_{1},T)}(s)\left\vert a(s)\right\vert (s-u_{1})^{\frac{1}{2}-H}s^{H-%
\frac{1}{2}}  \notag \\
&&\times u_{2}^{-H-\frac{1}{2}}\chi _{(u_{2},T)}(s)\left\vert
a(s)\right\vert (s-u_{2})^{\frac{1}{2}-H}s^{H-\frac{1}{2}}  \notag \\
&&\times E[\left\Vert Y_{s-u_{1}}^{x}\right\Vert ^{2}]^{1/2}E[\left\Vert
Y_{s-u_{2}}^{x}\right\Vert ^{2}]^{1/2}du_{1}du_{2}.  \label{E}
\end{eqnarray}%
Let $b_{n},n\geq 1$ be a sequence of smooth functions, which approximates $b$
in the sense of Theorem \ref{2Sobolev}.\ Denote by $X_{\cdot }^{x,n},n\geq 1$
the corresponding solutions. Then it follows from Lemma \ref%
{BoundedDerivatives} that for all $B\in \mathcal{B}([0;T]),$ $G\in \mathcal{B%
}(U):$ 
\begin{eqnarray*}
&&\int_{B}\int_{G}E[\left\langle \frac{\partial }{\partial x}X_{t}^{x,n},%
\frac{\partial }{\partial x}X_{t}^{x,m}\right\rangle _{\mathbb{R}^{d\times
d}}]dxdt \\
&\leq &\left\vert \int_{B}\int_{G}E[\left\langle \frac{\partial }{\partial x}%
X_{t}^{x,n},\frac{\partial }{\partial x}X_{t}^{x,m}\right\rangle _{\mathbb{R}%
^{d\times d}}]dxdt\right\vert \\
&\leq &\int_{B}\int_{G}E[\left\Vert \frac{\partial }{\partial x}%
X_{t}^{x,n}\right\Vert ^{2}]^{1/2}E[\left\Vert \frac{\partial }{\partial x}%
X_{t}^{x,m}\right\Vert ^{2}]^{1/2}dxdt \\
&\leq &\int_{B}\int_{G}(C_{2,H,d,T}(\left\Vert b_{n}\right\Vert _{L_{\infty
}^{\infty }},\left\Vert b_{n}\right\Vert _{L_{\infty
}^{1}}))^{1/2}(C_{2,H,d,T}(\left\Vert b_{m}\right\Vert _{L_{\infty }^{\infty
}},\left\Vert b_{m}\right\Vert _{L_{\infty }^{1}}))^{1/2}dxdt \\
&\leq &\int_{B}\int_{G}Kdxdt,
\end{eqnarray*}%
where $K<\infty $ is a constant only depending on $H,d,T$ and the "size" of $%
b$. Hence, by using Lemma \ref{BoundedDerivatives} and weak convergence both
in $L^{2}([0,T]\times \Omega \times U,dt\times \mu \times dx;\mathbb{R}%
^{d\times d})$ and $L^{2}([0,T]\times \Omega ,\mathcal{P},dt\times \mu
;W^{1,p}(U))$ for suitable subsequences with respect to $n$ and $m$,
successively, we see that $t-$a.e, $x-$a.e. 
\begin{equation*}
E[\left\Vert Y_{t}^{x}\right\Vert ^{2}]\leq C.
\end{equation*}%
Using Fatou's Lemma combined with the continuity of $(x\longmapsto
Y_{t}^{x}(\omega ))$ for all $(t,\omega ),$ we also find that $t-$a.e.%
\begin{equation}
E[\left\Vert Y_{t}^{x}\right\Vert ^{2}]\leq C  \label{Y}
\end{equation}%
for all $x\in U$. Similarly, one shows that 
\begin{equation}
E[\left\Vert Y_{t}^{x}\right\Vert ^{4}]\leq C  \label{Z}
\end{equation}%
for a constant $C<\infty $ for all $x\in U$. So we obtain from (\ref{E}) that%
\begin{eqnarray}
&&\sup_{x\in U}E[\left( \int_{0}^{T}u^{-H-\frac{1}{2}}\chi
_{(u,T)}(s)\left\vert a(s)\right\vert (s-u)^{\frac{1}{2}-H}s^{H-\frac{1}{2}%
}\left\Vert Y_{s-u}^{x}\right\Vert du\right) ^{2}]  \notag \\
&\leq &C\int_{0}^{T}\int_{0}^{T}u_{1}^{-H-\frac{1}{2}}\chi
_{(u_{1},T)}(s)(s-u_{1})^{\frac{1}{2}-H}s^{H-\frac{1}{2}}  \notag \\
&&\times u_{2}^{-H-\frac{1}{2}}\chi _{(u_{2},T)}(s)(s-u_{2})^{\frac{1}{2}%
-H}s^{H-\frac{1}{2}}du_{1}du_{2}  \notag \\
&=&C\int_{0}^{T}u^{-H-\frac{1}{2}}\int_{u}^{T}(s-u)^{\frac{1}{2}-H}s^{H-%
\frac{1}{2}}dsdu<\infty .  \label{sup}
\end{eqnarray}%
So 
\begin{equation}
\sup_{x\in U}E[\left\Vert \int_{0}^{T}u^{-H-\frac{1}{2}%
}\int_{u}^{T}a(s)(s-u)^{\frac{1}{2}-H}s^{H-\frac{1}{2}}\left(
Y_{s-u}^{x}\right) ^{\ast }dB_{s}du\right\Vert ^{2}]<\infty .  \label{2nd}
\end{equation}%
Now let $x_{m}\underset{m\longrightarrow \infty }{\longrightarrow }x\in U$.
Then%
\begin{eqnarray*}
&&\left\Vert E[\Phi (Y_{T}^{x_{m}})\int_{0}^{T}u^{-H-\frac{1}{2}%
}\int_{u}^{T}a(s)(s-u)^{\frac{1}{2}-H}s^{H-\frac{1}{2}}\left(
Y_{s-u}^{x_{m}}\right) ^{\ast }dB_{s}du\right. \\
&&\left. -\Phi (Y_{T}^{x})\int_{0}^{T}u^{-H-\frac{1}{2}%
}\int_{u}^{T}a(s)(s-u)^{\frac{1}{2}-H}s^{H-\frac{1}{2}}\left(
Y_{s-u}^{x}\right) ^{\ast }dB_{s}du]\right\Vert \\
&\leq &\left\Vert I_{1}\right\Vert +\left\Vert I_{2}\right\Vert ,
\end{eqnarray*}%
where 
\begin{eqnarray*}
&&I_{1} \\
&:&=E[(\Phi (Y_{T}^{x_{m}})-\Phi (Y_{T}^{x}))\int_{0}^{T}u^{-H-\frac{1}{2}%
}\int_{u}^{T}a(s)(s-u)^{\frac{1}{2}-H}s^{H-\frac{1}{2}}\left(
Y_{s-u}^{x_{m}}\right) ^{\ast }dB_{s}du]
\end{eqnarray*}%
and%
\begin{eqnarray*}
&&I_{2} \\
&:&=E[\Phi (Y_{T}^{x})\int_{0}^{T}u^{-H-\frac{1}{2}}\int_{u}^{T}a(s)(s-u)^{%
\frac{1}{2}-H}s^{H-\frac{1}{2}}(\left( Y_{s-u}^{x_{m}}\right) ^{\ast
}-\left( Y_{s-u}^{x}\right) ^{\ast })dB_{s}du].
\end{eqnarray*}%
It follows from It\^{o}'s isometry and (\ref{sup}) that%
\begin{eqnarray*}
&&\left\Vert I_{1}\right\Vert \\
&\leq &E[(\Phi (Y_{T}^{x_{m}})-\Phi (Y_{T}^{x}))^{2}]^{1/2} \\
&&\times (\sup_{x\in U}E[\left( \int_{0}^{T}u^{-H-\frac{1}{2}}\chi
_{(u,T)}(s)\left\vert a(s)\right\vert (s-u)^{\frac{1}{2}-H}s^{H-\frac{1}{2}%
}\left\Vert Y_{s-u}^{x}\right\Vert du\right) ^{2}])^{1/2} \\
&\leq &CE[(\Phi (Y_{T}^{x_{m}})-\Phi (Y_{T}^{x}))^{2}]^{1/2}.
\end{eqnarray*}%
So because of dominated convergence $I_{1}=I_{1}(m)\underset{%
m\longrightarrow \infty }{\longrightarrow }0.$

On the other hand%
\begin{eqnarray*}
&&\left\Vert I_{2}\right\Vert \\
&\leq &CE[\left\Vert \int_{0}^{T}u^{-H-\frac{1}{2}}\int_{u}^{T}a(s)(s-u)^{%
\frac{1}{2}-H}s^{H-\frac{1}{2}}(\left( Y_{s-u}^{x_{m}}\right) ^{\ast
}-\left( Y_{s-u}^{x}\right) ^{\ast })dB_{s}du\right\Vert ^{2}]^{1/2} \\
&\leq &\sup_{x\in U}E[\left( \int_{0}^{T}u^{-H-\frac{1}{2}}\chi
_{(u,T)}(s)\left\vert a(s)\right\vert (s-u)^{\frac{1}{2}-H}s^{H-\frac{1}{2}%
}\left\Vert Y_{s-u}^{x_{m}}-Y_{s-u}^{x}\right\Vert du\right) ^{2}] \\
&&\int_{0}^{T}\int_{0}^{T}u_{1}^{-H-\frac{1}{2}}\chi
_{(u_{1},T)}(s)\left\vert a(s)\right\vert (s-u_{1})^{\frac{1}{2}-H}s^{H-%
\frac{1}{2}} \\
&&\times u_{2}^{-H-\frac{1}{2}}\chi _{(u_{2},T)}(s)\left\vert
a(s)\right\vert (s-u_{2})^{\frac{1}{2}-H}s^{H-\frac{1}{2}} \\
&&\times E[\left\Vert Y_{s-u_{1}}^{x}-Y_{s-u_{1}}^{x_{m}}\right\Vert
^{2}]^{1/2}E[\left\Vert Y_{s-u_{2}}^{x}-Y_{s-u_{2}}^{x_{m}}\right\Vert
^{2}]^{1/2}du_{1}du_{2}.
\end{eqnarray*}%
Because of continuity we know that%
\begin{equation*}
\left\Vert Y_{t}^{x}-Y_{t}^{x_{m}}\right\Vert ^{2}\underset{m\longrightarrow
\infty }{\longrightarrow }0
\end{equation*}%
for all $(t,\omega )$. So it follows from uniform integrability in
connection with (\ref{Z}) that%
\begin{equation*}
E[\left\Vert Y_{t}^{x}-Y_{t}^{x_{m}}\right\Vert ^{2}]\underset{%
m\longrightarrow \infty }{\longrightarrow }0
\end{equation*}%
for all $t$. Then using (\ref{Y}) and dominated convergence shows that%
\begin{equation*}
I_{2}=I_{2}(m)\underset{m\longrightarrow \infty }{\longrightarrow }0.
\end{equation*}%
In summary, we see that $M=M(x)$ with the derivative of the flow replaced by 
$Y_{t}^{\cdot },0\leq t\leq T$ is continuous in $x$ on $U$.
\end{proof}

\bigskip

\bigskip

\section{\protect\bigskip Application: Stock price model with stochastic
volatility}

In this Section we propose a model for stock prices $S_{t}^{x_{1},x_{2}},0%
\leq t\leq T$ with stochastic volatility $\sigma _{t}^{x_{2}},0\leq t\leq T$
described by the following SDE%
\begin{eqnarray}
S_{t}^{x_{1},x_{2}} &=&x_{1}+\int_{0}^{t}\mu
S_{u}^{x_{1},x_{2}}du+\int_{0}^{t}g(\sigma
_{u}^{x_{2}})S_{u}^{x_{1},x_{2}}dW_{u}  \notag \\
\sigma _{t}^{x_{2}} &=&x_{2}+\int_{0}^{t}b(u,\sigma
_{u}^{x_{2}})du+B_{t}^{H},x_{1},x_{2}\in \mathbb{R},0\leq t\leq T,
\label{Stock}
\end{eqnarray}%
where $W_{\cdot \text{ }}$ is a Wiener process, which is independent of a
fractional Brownian motion $B_{\cdot }^{H}$ with Hurst parameter $H<\frac{1}{%
2(d+2)}=\frac{1}{6},$ and where $\mu \in \mathbb{R}$, $b\in L_{\infty
,\infty }^{1,\infty }$ and $g:\mathbb{R}\longrightarrow (\alpha ,\infty )$
belongs to $C_{b}^{2}(\mathbb{R})$ for same $\alpha >0$. Let us also assume
that $\Omega =\Omega _{1}\times \Omega _{2}$ for sample spaces $\Omega _{1}$%
, $\Omega _{2}$, on which $W_{\cdot \text{ }}$ and $B_{\cdot }^{H}$ are
defined, respectively.

For a moment, let us assume that $b\in C_{c}^{\infty }((0,T)\times \mathbb{R}%
^{d}).$ Then $X_{t}^{x}:=(S_{t}^{x_{1},x_{2}},\sigma _{t}^{x_{2}})^{\ast
},x=(x_{1},x_{2})$ is Malliavin differentiable with respect to $%
Z=(Z^{(1)},Z^{(2)})^{\ast }=(W,B^{H})^{\ast }$ with Malliavin derivative $%
D=(D^{W},D^{H})^{\ast }$ and we get%
\begin{eqnarray*}
&&D_{s}X_{t}^{x} \\
&=&\int_{s}^{t}\left( 
\begin{array}{ll}
\mu & 0 \\ 
0 & b^{\shortmid }(u,\sigma _{u}^{x_{2}})%
\end{array}%
\right) D_{s}X_{u}^{x}du \\
&&+\left( \sum_{j=1}^{2}\int_{s}^{t}\sum_{l=1}^{2}\frac{\partial }{\partial
x_{l}}a_{ij}(S_{u}^{x_{1},x_{2}},\sigma
_{u}^{x_{2}})(D_{s}X_{u}^{x})_{rl}dZ_{u}^{(j)}\right) _{1\leq i,r\leq 2} \\
&&+\chi _{_{\lbrack 0,t]}(s)}\left( a_{ij}(S_{s}^{x_{1},x_{2}},\sigma
_{s}^{x_{2}})\right) _{1\leq i,j\leq 2} \\
&=&\int_{s}^{t}\left( 
\begin{array}{ll}
\mu & 0 \\ 
0 & b^{\shortmid }(u,\sigma _{u}^{x_{2}})%
\end{array}%
\right) D_{s}X_{u}^{x}du \\
&&+\left( \int_{s}^{t}\sum_{l=1}^{2}\frac{\partial }{\partial x_{l}}%
a_{i1}(S_{u}^{x_{1},x_{2}},\sigma
_{u}^{x_{2}})(D_{s}X_{u}^{x})_{rl}dW_{u}\right) _{1\leq i,r\leq 2} \\
&&+\chi _{_{\lbrack 0,t]}(s)}\left( a_{ij}(S_{s}^{x_{1},x_{2}},\sigma
_{s}^{x_{2}})\right) _{1\leq i,j\leq 2}
\end{eqnarray*}%
where 
\begin{equation*}
\left( a_{ij}(x_{1},x_{2})\right) _{1\leq i,j\leq 2}=\left( 
\begin{array}{ll}
g(x_{1})x_{2} & 0 \\ 
0 & 1%
\end{array}%
\right) .
\end{equation*}%
We know that $X_{t}^{x,y}$ is twice continuously differentiable with respect
to $(x,y)$. Then using a substitution formula for Wiener integrals \cite[%
proof of Theorem 3.2.9]{Nualart}, one finds similarly to the proof of Theorem \ref%
{Bismut} that

\begin{equation*}
D_{s}X_{t}^{x}=\frac{\partial }{\partial x}X_{t}^{s,X_{s}^{x}}\chi
_{_{\lbrack 0,t]}(s)}\left( a_{ij}(S_{s}^{x_{1},x_{2}},\sigma
_{s}^{x_{2}})\right) _{1\leq i,j\leq 2}.
\end{equation*}%
Similarly, we get for a payoff function $\Phi \in C_{c}^{\infty }(\mathbb{R}%
^{2})$ that%
\begin{equation*}
\frac{\partial }{\partial x}E[\Phi (X_{T}^{x,n})]=E[\Phi ^{\shortmid
}(X_{T}^{x})\frac{\partial }{\partial x}X_{T}^{s,X_{s}^{x}}\frac{\partial }{%
\partial x}X_{s}^{x}].
\end{equation*}%
So%
\begin{equation*}
\frac{\partial }{\partial x}E[\Phi (X_{T}^{x})]=E[\Phi ^{\shortmid
}(X_{T}^{x})D_{s}X_{T}^{x}\left( a_{ij}(S_{s}^{x_{1},x_{2}},\sigma
_{s}^{x_{2}})\right) _{1\leq i,j\leq 2}^{-1}\frac{\partial }{\partial x}%
X_{s}^{x}].
\end{equation*}%
Hence, for a bounded measurable function $a$ summing up to one we obtain by
means of the chain rule with respect to $D_{\cdot \text{ }}$ that%
\begin{eqnarray*}
&&\frac{\partial }{\partial x}E[\Phi (X_{T}^{x})] \\
&=&E[\int_{0}^{T}\{a(s)\Phi ^{\shortmid }(X_{T}^{x})D_{s}X_{T}^{x}\left(
a_{ij}(S_{s}^{x_{1},x_{2}},\sigma _{s}^{x_{2}})\right) _{1\leq i,j\leq
2}^{-1}\frac{\partial }{\partial x}X_{s}^{x}\}ds] \\
&=&E[\int_{0}^{T}\{a(s)D_{s}\Phi (X_{T}^{x})\left(
a_{ij}(S_{s}^{x_{1},x_{2}},\sigma _{s}^{x_{2}})\right) _{1\leq i,j\leq
2}^{-1}\frac{\partial }{\partial x}X_{s}^{x}\}ds]
\end{eqnarray*}%
We have that%
\begin{eqnarray*}
&&\left( a_{ij}(S_{s}^{x_{1},x_{2}},\sigma _{s}^{x_{2}})\right) _{1\leq
i,j\leq 2}^{-1}\frac{\partial }{\partial x}X_{s}^{x} \\
&=&\left( 
\begin{array}{ll}
(S_{s}^{x_{1},x_{2}}g(\sigma _{s}^{x_{2}}))^{-1}\frac{\partial }{\partial
x_{1}}S_{s}^{x_{1},x_{2}} & (S_{s}^{x_{1},x_{2}}g(\sigma _{s}^{x_{2}}))^{-1}%
\frac{\partial }{\partial x_{2}}S_{s}^{x_{1},x_{2}} \\ 
0 & \frac{\partial }{\partial x_{2}}\sigma _{s}^{x_{2}}%
\end{array}%
\right) .
\end{eqnarray*}%
Thus%
\begin{eqnarray*}
&&D_{s}\Phi (X_{T}^{x})\left( a_{ij}(S_{s}^{x_{1},x_{2}},\sigma
_{s}^{x_{2}})\right) _{1\leq i,j\leq 2}^{-1}\frac{\partial }{\partial x}%
X_{s}^{x} \\
&=&(D_{s}^{W}\Phi (X_{T}^{x})(S_{s}^{x_{1},x_{2}}g(\sigma _{s}^{x_{2}}))^{-1}%
\frac{\partial }{\partial x_{1}}S_{s}^{x_{1},x_{2}}, \\
&&D_{s}^{W}\Phi (X_{T}^{x})(S_{s}^{x_{1},x_{2}}g(\sigma _{s}^{x_{2}}))^{-1}%
\frac{\partial }{\partial x_{2}}S_{s}^{x_{1},x_{2}} \\
&&+D_{s}^{H}\Phi (X_{T}^{x})\frac{\partial }{\partial x_{2}}\sigma
_{s}^{x_{2}})^{\ast }.
\end{eqnarray*}%
So it follows that%
\begin{eqnarray*}
&&\frac{\partial }{\partial x}E[\Phi (X_{T}^{x})] \\
&=&(E[\int_{0}^{T}a(s)D_{s}^{W}\Phi (X_{T}^{x})(S_{s}^{x_{1},x_{2}}g(\sigma
_{s}^{x_{2}}))^{-1}\frac{\partial }{\partial x_{1}}S_{s}^{x_{1},x_{2}}ds], \\
&&E[\int_{0}^{T}a(s)D_{s}^{W}\Phi (X_{T}^{x})(S_{s}^{x_{1},x_{2}}g(\sigma
_{s}^{x_{2}}))^{-1}\frac{\partial }{\partial x_{2}}S_{s}^{x_{1},x_{2}}ds] \\
&&+E[\int_{0}^{T}a(s)D_{s}^{H}\Phi (X_{T}^{x})\frac{\partial }{\partial x_{2}%
}\sigma _{s}^{x_{2}}ds])^{\ast }.
\end{eqnarray*}%
In fact, using the independence of $W_{\cdot }$ and $B_{\cdot }^{H}$, we can
employ the proof of Theorem \ref{Bismut} and get that%
\begin{eqnarray*}
&&E[\int_{0}^{T}a(s)D_{s}^{H}\Phi (X_{T}^{x})\frac{\partial }{\partial x_{2}}%
\sigma _{s}^{x_{2}}ds] \\
&=&C_{H}E[\Phi (X_{T}^{x})\int_{0}^{T}u^{-H-\frac{1}{2}%
}\int_{u}^{T}a(s-u)(s-u)^{\frac{1}{2}-H}s^{H-\frac{1}{2}}\frac{\partial }{%
\partial x}\sigma _{s-u}^{x_{2}}dB_{s}du],
\end{eqnarray*}%
where $B_{\cdot }$ is a one-dimensional Brownian motion with respect to the
representation of $B_{\cdot }^{H}$ in (\ref{RepFractional}).

Finally, we can apply the duality formula with respect to $W_{\cdot }$ and
similar arguments as in the proof of Theorem \ref{Bismut} based on regular
functions $g,$ $b,$ $\Phi $ and we obtain the following BLE-formula for our
stock price model (\ref{Stock}):

\begin{theorem}
\label{Volatility}Let $U\subset \mathbb{R}^{2}$ be a bounded, open set and $%
b\in L_{\infty ,\infty }^{1,\infty }$ in the stock price model (\ref{Stock}%
). Further, assume that $g:\mathbb{R}\longrightarrow (\alpha ,\infty )$
belongs to $C_{b}^{2}(\mathbb{R})$ for some $\alpha >0$ and that $\Phi :%
\mathbb{R}^{2}\longrightarrow \mathbb{R}$ satisfies%
\begin{equation*}
\Phi (S_{T}^{\cdot ,\cdot },\sigma _{T}^{\cdot })\in L^{2}(\Omega \times
U,\mu \times dx)\text{.}
\end{equation*}%
In addition, let $a$ be a bounded and measurable function on $[0,T]$, which
sums up to $1$. Then%
\begin{eqnarray}
&&\frac{\partial }{\partial x}E[\Phi (S_{T}^{x_{1},x_{2}},\sigma
_{T}^{x_{2}})]  \notag \\
&=&(E[\Phi (X_{T}^{x})\int_{0}^{T}a(s)(S_{s}^{x_{1},x_{2}}g(\sigma
_{s}^{x_{2}}))^{-1}\frac{\partial }{\partial x_{1}}%
S_{s}^{x_{1},x_{2}}dW_{s}],  \notag \\
&&E[\Phi (X_{T}^{x})\int_{0}^{T}a(s)(S_{s}^{x_{1},x_{2}}g(\sigma
_{s}^{x_{2}}))^{-1}\frac{\partial }{\partial x_{2}}S_{s}^{x_{1},x_{2}}dW_{s}]
\notag \\
&&+C_{H}E[\Phi (X_{T}^{x})\int_{0}^{T}u^{-H-\frac{1}{2}%
}\int_{u}^{T}a(s-u)(s-u)^{\frac{1}{2}-H}s^{H-\frac{1}{2}}\frac{\partial }{%
\partial x}\sigma _{s-u}^{x_{2}}dB_{s}du])^{\ast }  \label{SBEL}
\end{eqnarray}%
for almost all  $x=(x_{1},x_{2})\in U$, where $C_{H}$ is a constant as given
in Theorem \ref{Bismut}.
\end{theorem}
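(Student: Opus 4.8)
The plan is to follow the same two-step scheme as in the proof of Theorem~\ref{Bismut}: first establish the identity~(\ref{SBEL}) for a smooth drift $b\in C_c^\infty((0,T)\times\mathbb{R}^d)$ and a smooth payoff $\Phi\in C_c^\infty(\mathbb{R}^2)$, then remove the smoothness of $b$ by a smoothing argument, and finally relax the regularity of $\Phi$ by a monotone-class/density argument. For smooth data the computation preceding the theorem already reduces $\frac{\partial}{\partial x}E[\Phi(X_T^x)]$ to the sum of the two Wiener-integral terms appearing in the first two components and the single fractional term in the last component. Hence the real content of the proof is to justify the integration-by-parts steps producing these three terms and, most importantly, to pass each of them to the limit along a smoothing sequence $b_n\in C_c^\infty((0,T)\times\mathbb{R}^d)$ chosen as in~(\ref{C1}), (\ref{C2}), (\ref{C3}).

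For the two Wiener-integral terms I would exploit the product structure $\Omega=\Omega_1\times\Omega_2$ and the independence of $W$ and $B^H$. Conditioning on the $B^H$-path freezes the volatility trajectory $\sigma_\cdot^{x_2}$, so that $S_\cdot^{x_1,x_2}$ becomes a genuine It\^o diffusion driven only by $W$ with coefficients depending on the frozen path through $g(\sigma_\cdot)$, which is $C_b^2$ and bounded below by $\alpha>0$. On this conditional space one applies the duality formula for the Wiener integral (equivalently the Clark--Ocone formula, as in the proof of Theorem~\ref{Bismut}) to turn $a(s)D_s^W\Phi(X_T^x)$ into a stochastic integral against $dW_s$. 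The explicit linearity of $S$ in $x_1$ gives $(S_s^{x_1,x_2}g(\sigma_s^{x_2}))^{-1}\frac{\partial}{\partial x_1}S_s^{x_1,x_2}=(x_1 g(\sigma_s^{x_2}))^{-1}$, a bounded integrand, and the analogous $x_2$-expression is controlled because $(S_s^{x_1,x_2}g(\sigma_s^{x_2}))^{-1}$ is bounded while $\frac{\partial}{\partial x_2}S_s^{x_1,x_2}$ is governed by $\frac{\partial}{\partial x_2}\sigma_s^{x_2}$. This produces exactly the first two components of~(\ref{SBEL}).

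For the fractional term I would invoke the argument of Theorem~\ref{Bismut} applied to the $\sigma$-component: the chain rule for $D^H$, the substitutions $u\mapsto u+s$ and $s\mapsto s-u$, together with the representation of $D_s^H$ through the Brownian motion $B$ in~(\ref{RepFractional}) (via Proposition~5.2.1 of \cite{Nualart}) and the duality formula in the $B$-direction, convert $\int_0^T a(s)D_s^H\Phi(X_T^x)\frac{\partial}{\partial x_2}\sigma_s^{x_2}\,ds$ into the Malliavin-weight integral $C_H E[\Phi(X_T^x)\int_0^T u^{-H-\frac12}\int_u^T a(s-u)(s-u)^{\frac12-H}s^{H-\frac12}\frac{\partial}{\partial x_2}\sigma_{s-u}^{x_2}\,dB_s\,du]$, with the same constant $C_H$ as in Theorem~\ref{Bismut}.

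The crux is the passage to the limit $n\to\infty$. Here I would use the uniform $L^p$-bounds on the flow derivatives from Lemma~\ref{BoundedDerivatives}, the strong $L^2(\Omega)$-convergence of the solutions from Proposition~\ref{ConvergenceX}, the weak $L^2$-convergence of the flow derivatives from Lemma~\ref{WeakConvergence}, and the uniform Malliavin-derivative bound~(\ref{D}) from Lemma~\ref{BoundMalliavinDerivative}. Each term is split, as in the treatment of $I_1$, $I_2$, $I_3$ in the proof of Theorem~\ref{Bismut}, into a part carrying the difference $\Phi(X_T^{x,n})-\Phi(X_T^x)$ and a part carrying the difference of the flow derivatives; It\^o's isometry, H\"older's inequality, and the boundedness of $(S_s^{x_1,x_2}g(\sigma_s^{x_2}))^{-1}$ and of $a$ then send both parts to zero. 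I expect the main obstacle to be the Wiener-integral terms rather than the fractional one: one must control the product of $(S_s^{x_1,x_2,n}g(\sigma_s^{x_2,n}))^{-1}$ with $\frac{\partial}{\partial x_2}S_s^{x_1,x_2,n}$, whose limit is only weak, so that the convergence has to be extracted along a subsequence and identified (via Lemma~1.2.3 in \cite{Nualart}, as in Theorem~\ref{Bismut}) with the Sobolev/Malliavin derivative of the limiting flow. Once all three terms converge for smooth $\Phi$, a monotone-class argument combined with dominated convergence and the Cauchy--Schwarz inequality extends~(\ref{SBEL}) to all Borel $\Phi$ with $\Phi(S_T^{\cdot,\cdot},\sigma_T^{\cdot})\in L^2(\Omega\times U,\mu\times dx)$, which completes the proof.
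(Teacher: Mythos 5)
Your proposal is correct and its architecture is exactly the paper's: prove (\ref{SBEL}) first for $b\in C_{c}^{\infty }((0,T)\times \mathbb{R}^{d})$ and $\Phi \in C_{c}^{\infty }(\mathbb{R}^{2})$ by splitting $\frac{\partial }{\partial x}E[\Phi (X_{T}^{x})]$ into two $D^{W}$-terms and one $D^{H}$-term, convert the former by duality with respect to $W$ and the latter by the machinery of Theorem \ref{Bismut}, then pass to the limit $b_{n}\rightarrow b$ using Lemmas \ref{BoundedDerivatives}, \ref{BoundMalliavinDerivative} and \ref{WeakConvergence} together with Proposition \ref{ConvergenceX}, and finish with the monotone class theorem. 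The one place where you genuinely deviate is in how the Wiener-integral terms are produced: you condition on the $B^{H}$-path, so that $S_{\cdot }^{x_{1},x_{2}}$ becomes a classical It\^{o} diffusion to which conditional Clark--Ocone/duality applies, whereas the paper computes the joint Malliavin derivative $D=(D^{W},D^{H})^{\ast }$ of $X_{t}^{x}=(S_{t}^{x_{1},x_{2}},\sigma _{t}^{x_{2}})^{\ast }$, identifies $D_{s}X_{T}^{x}$ with $\frac{\partial }{\partial x}X_{T}^{s,X_{s}^{x}}$ times the diffusion matrix $\left( a_{ij}(S_{s}^{x_{1},x_{2}},\sigma _{s}^{x_{2}})\right) _{1\leq i,j\leq 2}$ via the substitution formula of \cite[proof of Theorem 3.2.9]{Nualart}, and then inverts that matrix to obtain the three weights before applying duality directly; the two devices are equivalent given the product structure $\Omega =\Omega _{1}\times \Omega _{2}$, the paper's route generating the decomposition and the exact weights mechanically, yours making the role of independence more transparent. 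One imprecision you should repair: $(S_{s}^{x_{1},x_{2}}g(\sigma _{s}^{x_{2}}))^{-1}$ is \emph{not} a bounded random variable (only $g(\sigma _{s}^{x_{2}})^{-1}\leq \alpha ^{-1}$ is, since $(S_{s}^{x_{1},x_{2}})^{-1}$ has moments of all orders but is unbounded); what controls the second component is that $\frac{\partial }{\partial x_{2}}S_{s}^{x_{1},x_{2}}$ carries a factor $S_{s}^{x_{1},x_{2}}$, so that the ratio $(S_{s}^{x_{1},x_{2}}g(\sigma _{s}^{x_{2}}))^{-1}\frac{\partial }{\partial x_{2}}S_{s}^{x_{1},x_{2}}$ --- exactly like your correctly computed weight $(x_{1}g(\sigma _{s}^{x_{2}}))^{-1}$ in the first component --- has moments of every order, which is all that the It\^{o}-isometry estimates in your limiting step actually require.
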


\begin{remark}
If $H<\frac{1}{2(d+3)}=\frac{1}{8}$, one can show just as in Theorem \ref%
{ContinuousBEL} that the right hand side of (\ref{SBEL}) has a continuous
version.
\end{remark}

\bigskip

\section{Appendix}

\bigskip We want to recall here a version of Girsanov's theorem for the
fractional Brownian motion, which we need in the the proof of Lemma \ref%
{SecondBoundedDerivative}. For this purpose, let us pass in review some
basic concepts from fractional calculus (see \cite{samko.et.al.93} and \cite%
{lizorkin.01}).

Let $a,$ $b\in \mathbb{R}$ with $a<b$. Let $f\in L^{p}([a,b])$ with $p\geq 1$
and $\alpha >0$. Introduce the \emph{left-} and \emph{right-sided
Riemann-Liouville fractional integrals} as 
\begin{equation*}
I_{a^{+}}^{\alpha }f(x)=\frac{1}{\Gamma (\alpha )}\int_{a}^{x}(x-y)^{\alpha
-1}f(y)dy
\end{equation*}%
and 
\begin{equation*}
I_{b^{-}}^{\alpha }f(x)=\frac{1}{\Gamma (\alpha )}\int_{x}^{b}(y-x)^{\alpha
-1}f(y)dy
\end{equation*}%
for almost all $x\in \lbrack a,b]$, where $\Gamma $ is the Gamma function.

For a given integer $p\geq 1$, let $I_{a^{+}}^{\alpha }(L^{p})$ (resp. $%
I_{b^{-}}^{\alpha }(L^{p})$) be the image of $L^{p}([a,b])$ of the operator $%
I_{a^{+}}^{\alpha }$ (resp. $I_{b^{-}}^{\alpha }$). If $f\in
I_{a^{+}}^{\alpha }(L^{p})$ (resp. $f\in I_{b^{-}}^{\alpha }(L^{p})$) and $%
0<\alpha <1$ then we can define the \emph{left-} and \emph{right-sided
Riemann-Liouville fractional derivatives} by 
\begin{equation*}
D_{a^{+}}^{\alpha }f(x)=\frac{1}{\Gamma (1-\alpha )}\frac{d}{dx}\int_{a}^{x}%
\frac{f(y)}{(x-y)^{\alpha }}dy
\end{equation*}%
and 
\begin{equation*}
D_{b^{-}}^{\alpha }f(x)=\frac{1}{\Gamma (1-\alpha )}\frac{d}{dx}\int_{x}^{b}%
\frac{f(y)}{(y-x)^{\alpha }}dy.
\end{equation*}

The left- and right-sided derivatives of $f$ can be also represented as 
\begin{equation*}
D_{a^{+}}^{\alpha }f(x)=\frac{1}{\Gamma (1-\alpha )}\left( \frac{f(x)}{%
(x-a)^{\alpha }}+\alpha \int_{a}^{x}\frac{f(x)-f(y)}{(x-y)^{\alpha +1}}%
dy\right)
\end{equation*}%
and 
\begin{equation*}
D_{b^{-}}^{\alpha }f(x)=\frac{1}{\Gamma (1-\alpha )}\left( \frac{f(x)}{%
(b-x)^{\alpha }}+\alpha \int_{x}^{b}\frac{f(x)-f(y)}{(y-x)^{\alpha +1}}%
dy\right) .
\end{equation*}

Using the above definitions, one obtains that 
\begin{equation*}
I_{a^{+}}^{\alpha }(D_{a^{+}}^{\alpha }f)=f
\end{equation*}%
for all $f\in I_{a^{+}}^{\alpha }(L^{p})$ and 
\begin{equation*}
D_{a^{+}}^{\alpha }(I_{a^{+}}^{\alpha }f)=f
\end{equation*}%
for all $f\in L^{p}([a,b])$ and similarly for $I_{b^{-}}^{\alpha }$ and $%
D_{b^{-}}^{\alpha }$.

\bigskip

Let now $B^{H}=\{B_{t}^{H},t\in \lbrack 0,T]\}$ be a $d$-dimensional \emph{%
fractional Brownian motion} with Hurst parameter $H\in (0,1/2)$, that is $%
B^{H}$ is a centered Gaussian process with a covariance function given by 
\begin{equation*}
(R_{H}(t,s))_{i,j}:=E[B_{t}^{H,(i)}B_{s}^{H,(j)}]=\delta _{ij}\frac{1}{2}%
\left( t^{2H}+s^{2H}-|t-s|^{2H}\right) ,\quad i,j=1,\dots ,d,
\end{equation*}%
where $\delta _{ij}$ is one, if $i=j$, or zero else.

In the sequel we briefly recall the construction of the fractional Brownian
motion, which can be found in \cite{Nualart}. For simplicity, consider the
case $d=1$.

Let $\mathcal{E}$ be the set of step functions on $[0,T]$ and $\mathcal{H}$
be the Hilbert space given by the completion of $\mathcal{E}$ with respect
to the inner product 
\begin{equation*}
\langle 1_{[0,t]},1_{[0,s]}\rangle _{\mathcal{H}}=R_{H}(t,s).
\end{equation*}%
From that we get an extension of the mapping $1_{[0,t]}\mapsto B_{t}$ to an
isometry between $\mathcal{H}$ and a Gaussian subspace of $L^{2}(\Omega )$
with respect to $B^{H}$. We denote by $\varphi \mapsto B^{H}(\varphi )$ this
isometry.

If $H<1/2$, one shows that the covariance function $R_{H}(t,s)$ has the
representation

\bigskip\ 
\begin{equation}
R_{H}(t,s)=\int_{0}^{t\wedge s}K_{H}(t,u)K_{H}(s,u)du,  \label{2.2}
\end{equation}%
where 
\begin{equation}
K_{H}(t,s)=c_{H}\left[ \left( \frac{t}{s}\right) ^{H-\frac{1}{2}}(t-s)^{H-%
\frac{1}{2}}+\left( \frac{1}{2}-H\right) s^{\frac{1}{2}-H}\int_{s}^{t}u^{H-%
\frac{3}{2}}(u-s)^{H-\frac{1}{2}}du\right] .  \label{KH}
\end{equation}%
Here $c_{H}=\sqrt{\frac{2H}{(1-2H)\beta (1-2H,H+1/2)}}$ and $\beta $ is the
Beta function. See \cite[Proposition 5.1.3]{Nualart}.

Based on the kernel $K_{H}$, one can introduce by means (\ref{2.2}) an
isometry $K_{H}^{\ast }$ between $\mathcal{E}$ and $L^{2}([0,T])$ such that $%
(K_{H}^{\ast }1_{[0,t]})(s)=K_{H}(t,s)1_{[0,t]}(s).$ This isometry has an
extension to the Hilbert space $\mathcal{H}$, which has the following
representations by means of fractional derivatives

\begin{equation*}
(K_{H}^{\ast }\varphi )(s)=c_{H}\Gamma \left( H+\frac{1}{2}\right) s^{\frac{1%
}{2}-H}\left( D_{T^{-}}^{\frac{1}{2}-H}u^{H-\frac{1}{2}}\varphi (u)\right)
(s)
\end{equation*}%
and 
\begin{align*}
(K_{H}^{\ast }\varphi )(s)=& \,c_{H}\Gamma \left( H+\frac{1}{2}\right)
\left( D_{T^{-}}^{\frac{1}{2}-H}\varphi (s)\right) (s) \\
& +c_{H}\left( \frac{1}{2}-H\right) \int_{s}^{T}\varphi (t)(t-s)^{H-\frac{3}{%
2}}\left( 1-\left( \frac{t}{s}\right) ^{H-\frac{1}{2}}\right) dt.
\end{align*}%
for $\varphi \in \mathcal{H}$. One also proves that $\mathcal{H}=I_{T^{-}}^{%
\frac{1}{2}-H}(L^{2})$. See \cite{DU} and \cite[Proposition 6]%
{alos.mazet.nualart.01}.

Since $K_{H}^{\ast }$ is an isometry from $\mathcal{H}$ into $L^{2}([0,T])$,
the $d$-dimensional process $W=\{W_{t},t\in \lbrack 0,T]\}$ defined by 
\begin{equation}
W_{t}:=B^{H}((K_{H}^{\ast })^{-1}(1_{[0,t]}))  \label{WBH}
\end{equation}%
is a Wiener process and the process $B^{H}$ can be represented as 
\begin{equation}
B_{t}^{H}=\int_{0}^{t}K_{H}(t,s)dW_{s}.  \label{BHW}
\end{equation}%
See \cite{alos.mazet.nualart.01}.

In what follows we also need the Definition of a fractional Brownian motion
with respect to a filtration.

\begin{definition}
Let $\mathcal{G}=\left\{ \mathcal{G}_{t}\right\} _{t\in \left[ 0,T\right] }$
be a filtration on $\left( \Omega ,\mathcal{F},P\right) $ satisfying the
usual conditions. A fractional Brownian motion $B^{H}$ is called a $\mathcal{%
G}$-fractional Brownian motion if the process $W$ defined by (\ref{WBH}) is
a $\mathcal{G}$-Brownian motion.
\end{definition}

\bigskip

In the following, let $W$ be a standard Wiener process on a filtered
probability space $(\Omega ,\mathfrak{A},P),\{\mathcal{F}_{t}\}_{t\in
\lbrack 0,T]},$ where $\mathcal{F}=\{\mathcal{F}_{t}\}_{t\in \lbrack 0,T]}$
is the natural filtration generated by $W$ and augmented by all $P$-null
sets. Denote by $B:=B^{H}$ the fractional Brownian motion with Hurst
parameter $H\in (0,1/2)$ as in (\ref{BHW}).

We aim at using a version of Girsanov's theorem for fractional Brownian
motion which is due to \cite[Theorem 4.9]{DU}. The version stated here
corresponds to that in \cite[Theorem 2]{NO}. To this end, we need the
definition of an isomorphism $K_{H}$ from $L^{2}([0,T])$ onto $I_{0+}^{H+%
\frac{1}{2}}(L^{2})$ with respec to the kernel $K_{H}(t,s)$ in terms of the
fractional integrals as follows (see \cite[Theorem 2.1]{DU}): 
\begin{equation*}
(K_{H}\varphi )(s)=I_{0^{+}}^{2H}s^{\frac{1}{2}-H}I_{0^{+}}^{\frac{1}{2}%
-H}s^{H-\frac{1}{2}}\varphi ,\quad \varphi \in L^{2}([0,T]).
\end{equation*}

Using this and the properties of the Riemann-Liouville fractional integrals
and derivatives, one can show that the inverse of $K_{H}$ can be represented
as 
\begin{equation*}
(K_{H}^{-1}\varphi )(s)=s^{\frac{1}{2}-H}D_{0^{+}}^{\frac{1}{2}-H}s^{H-\frac{%
1}{2}}D_{0^{+}}^{2H}\varphi (s),\quad \varphi \in I_{0+}^{H+\frac{1}{2}%
}(L^{2}).
\end{equation*}

From this one obtains for absolutely continuous functions $\varphi $ (see 
\cite{NO}) that 
\begin{equation*}
(K_{H}^{-1}\varphi )(s)=s^{H-\frac{1}{2}}I_{0^{+}}^{\frac{1}{2}-H}s^{\frac{1%
}{2}-H}\varphi ^{\prime }(s).
\end{equation*}

\begin{theorem}[Girsanov's theorem for fBm]
\label{girsanov} Let $u=\{u_{t},t\in \lbrack 0,T]\}$ be an $\mathcal{F}$%
-adapted process with integrable trajectories and set $\widetilde{B}%
_{t}^{H}=B_{t}^{H}+\int_{0}^{t}u_{s}ds,\quad t\in \lbrack 0,T].$ Suppose that

\begin{itemize}
\item[(i)] $\int_{0}^{\cdot }u_{s}ds\in I_{0+}^{H+\frac{1}{2}}(L^{2}([0,T]))$%
, $P$-a.s.

\item[(ii)] $E[\xi_T]=1$ where 
\begin{equation*}
\xi_T := \exp\left\{-\int_0^T K_H^{-1}\left( \int_0^{\cdot} u_r
dr\right)(s)dW_s - \frac{1}{2} \int_0^T K_H^{-1} \left( \int_0^{\cdot} u_r
dr \right)^2(s)ds \right\}.
\end{equation*}
\end{itemize}

Then the shifted process $\widetilde{B}^H$ is an $\mathcal{F}$-fractional
Brownian motion with Hurst parameter $H$ under the new probability $%
\widetilde{P}$ defined by $\frac{d\widetilde{P}}{dP}=\xi_T$.
\end{theorem}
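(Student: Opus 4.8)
The plan is to transport the whole problem into the ``Wiener picture'' and then invoke the classical Girsanov theorem for the driving Brownian motion $W$. The starting point is the representation (\ref{BHW}), $B_t^H=\int_0^t K_H(t,s)\,dW_s$, together with the fact recorded in the Appendix that $K_H$ is an isomorphism of $L^2([0,T])$ onto $I_{0+}^{H+\frac12}(L^2)$ which acts through the kernel, $(K_H\varphi)(t)=\int_0^t K_H(t,s)\varphi(s)\,ds$.

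First I would rewrite the deterministic drift $\int_0^\cdot u_s\,ds$ as a kernel transform. By hypothesis (i) this function lies in the range $I_{0+}^{H+\frac12}(L^2)$ of $K_H$, so
\[
\widetilde{u}:=K_H^{-1}\Big(\int_0^\cdot u_r\,dr\Big)\in L^2([0,T])
\]
is well defined and $\int_0^t u_s\,ds=\int_0^t K_H(t,s)\widetilde{u}_s\,ds$. Applying the inversion formula for absolutely continuous functions from the Appendix gives $\widetilde{u}_s=s^{H-\frac12}\big(I_{0^+}^{\frac12-H}r^{\frac12-H}u_r\big)(s)$; since the fractional integral only involves values on $[0,s]$, the process $\widetilde{u}$ inherits $\mathcal{F}$-adaptedness from $u$, which is what makes the classical Girsanov theorem applicable.

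Next I would combine the two representations to write
\[
\widetilde{B}_t^H=\int_0^t K_H(t,s)\,dW_s+\int_0^t K_H(t,s)\widetilde{u}_s\,ds=\int_0^t K_H(t,s)\,d\widetilde{W}_s,\qquad \widetilde{W}_t:=W_t+\int_0^t\widetilde{u}_s\,ds.
\]
Pulling the kernel through both the It\^o and the Lebesgue integral is the main technical point and is justified by a stochastic Fubini argument; I expect this interchange, together with the measurability bookkeeping for $K_H^{-1}$, to be the principal obstacle. Now I apply the classical Girsanov theorem to the $\mathcal{F}$-Wiener process $W$ shifted by $\widetilde{u}$: condition (ii), $E[\xi_T]=1$, is exactly the statement that
\[
\frac{d\widetilde{P}}{dP}=\exp\Big\{-\int_0^T\widetilde{u}_s\,dW_s-\tfrac12\int_0^T\widetilde{u}_s^2\,ds\Big\}
\]
defines a probability measure under which $\widetilde{W}$ is an $\mathcal{F}$-Wiener process; since $\widetilde{u}_s=K_H^{-1}\big(\int_0^\cdot u_r\,dr\big)(s)$, this density is precisely the $\xi_T$ of the statement.

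Finally, under $\widetilde{P}$ the process $\widetilde{W}$ is an $\mathcal{F}$-Wiener process and $\widetilde{B}_t^H=\int_0^t K_H(t,s)\,d\widetilde{W}_s$ has exactly the structural form (\ref{BHW}) that characterises a fractional Brownian motion built from a Wiener process. Hence $\widetilde{B}^H$ is an $\mathcal{F}$-fractional Brownian motion with Hurst parameter $H$ under $\widetilde{P}$, as claimed. The argument mirrors \cite[Theorem 4.9]{DU} and \cite[Theorem 2]{NO}, the two delicate bookkeeping issues being the adaptedness of $\widetilde{u}$ and the exact identification of the Radon--Nikodym density with $\xi_T$.
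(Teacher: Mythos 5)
Your argument is correct and follows exactly the route of the sources the paper cites for this statement (\cite[Theorem 4.9]{DU}, \cite[Theorem 2]{NO}); the paper itself states the theorem without proof, importing it from those references, so there is no "paper proof" to diverge from. Your three key steps --- transferring to the Wiener picture via the operator $K_H$, establishing adaptedness of $\widetilde{u}_s=K_H^{-1}\bigl(\int_0^{\cdot}u_r\,dr\bigr)(s)$ through the inversion formula for absolutely continuous functions, and invoking the classical Girsanov theorem under condition (ii) --- constitute precisely the standard proof; the one overstatement is the appeal to stochastic Fubini, which is not needed, since $\widetilde{B}_t^H=\int_0^t K_H(t,s)\,d\widetilde{W}_s$ is simply the definition of integration against $d\widetilde{W}_s=dW_s+\widetilde{u}_s\,ds$ once one knows $\int_0^t u_s\,ds=\int_0^t K_H(t,s)\widetilde{u}_s\,ds$.
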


\begin{remark}
In the the multi-dimensional case, we define 
\begin{equation*}
(K_{H}\varphi )(s):=((K_{H}\varphi ^{(1)})(s),\dots ,(K_{H}\varphi
^{(d)})(s))^{\ast },\quad \varphi \in L^{2}([0,T];\mathbb{R}^{d}),
\end{equation*}%
where $\ast $ denotes transposition. Similarly for $K_{H}^{-1}$ and $%
K_{H}^{\ast }$.
\end{remark}

In this article we also resort to the following technical lemma (see \cite[%
Lemma 4.3]{BNP} for a proof):

\begin{lemma}
\label{Novikov} Let $\tilde{B}_{t}^{H}$ be a $d$-dimensional fractional
Brownian motion with respect to $(\Omega ,\mathfrak{A},\tilde{P})$. Then for
every $k\in \mathbb{R}$ we have 
\begin{equation*}
\tilde{E}\left[ \exp \left\{ k\int_{0}^{T}\left\vert K_{H}^{-1}\left(
\int_{0}^{\cdot }b(r,\tilde{B}_{r}^{H})dr\right) (s)\right\vert
^{2}ds\right\} \right] \leq C_{H,d,\mu ,T}(\Vert b\Vert _{L_{\infty
}^{\infty }})
\end{equation*}%
for some continuous increasing function $C_{H,d,k,T}$ depending only on $H$, 
$d$, $T$ and $k$.

In particular, 
\begin{equation*}
\tilde{E}\left[ \mathcal{E}\left( \int_{0}^{T}K_{H}^{-1}\left(
\int_{0}^{\cdot }b(r,\tilde{B}_{r}^{H})dr\right) ^{\ast }(s)dW_{s}\right)
^{p}\right] \leq C_{H,d,\mu ,T}(\Vert b\Vert _{L_{\infty }^{\infty }}),
\end{equation*}%
where $\mathcal{E}(M_{t})$ is the Dolean-Dade exponential of a local
martingale $M_{t},0\leq t\leq T$ and where $\tilde{E}$ denotes expectation
under $\tilde{P}$ and $\ast $ transposition.
\end{lemma}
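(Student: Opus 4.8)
The plan is to exploit the boundedness of $b$ to replace the random integrand by a purely \emph{deterministic} bound, after which both the exponential moment and the Dol\'eans--Dade moment follow from elementary estimates. Write $\varphi(t):=\int_0^t b(r,\tilde B_r^H)\,dr$. Since $b\in L_\infty^\infty$ is essentially bounded by $M:=\|b\|_{L_\infty^\infty}$, the path $\varphi$ is Lipschitz, hence absolutely continuous, with $\varphi'(s)=b(s,\tilde B_s^H)$ and $|\varphi'(s)|\le M$ for a.e.\ $s$. This is exactly the regularity needed to apply the representation of $K_H^{-1}$ on absolutely continuous functions recalled in the Appendix.

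The key step is a pointwise, $\omega$-free estimate. Unwinding the Riemann--Liouville integral $I_{0^+}^{\frac12-H}$ in that representation gives
\begin{equation*}
(K_H^{-1}\varphi)(s)=\frac{s^{H-\frac12}}{\Gamma(\tfrac12-H)}\int_0^s (s-y)^{-H-\frac12}\,y^{\frac12-H}\,b(y,\tilde B_y^H)\,dy .
\end{equation*}
Bounding $|b(y,\tilde B_y^H)|\le M$ and evaluating the Beta integral $\int_0^s (s-y)^{-H-\frac12}y^{\frac12-H}\,dy = s^{1-2H}B(\tfrac12-H,\tfrac32-H)$, I obtain
\begin{equation*}
\big|(K_H^{-1}\varphi)(s)\big|\le \frac{M\,B(\tfrac12-H,\tfrac32-H)}{\Gamma(\tfrac12-H)}\,s^{\frac12-H}=:c(H,d)\,M\,s^{\frac12-H},
\end{equation*}
which is deterministic. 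Squaring and using $H<\tfrac12$ so that $s^{1-2H}$ is integrable on $[0,T]$ yields the deterministic bound
\begin{equation*}
\int_0^T \big|(K_H^{-1}\varphi)(s)\big|^2\,ds \le c(H,d)^2 M^2\,\frac{T^{2-2H}}{2-2H}=:C(H,d,T,M)<\infty .
\end{equation*}

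The first assertion is now immediate: the integrand inside the exponential is bounded by $kC(H,d,T,M)$, so the expectation is at most $\exp\{kC(H,d,T,M)\}$, which is a continuous increasing function of $\|b\|_{L_\infty^\infty}$. For the \emph{in particular} statement, set $M_t:=\int_0^t K_H^{-1}\big(\int_0^\cdot b(r,\tilde B_r^H)\,dr\big)^{\ast}(s)\,dW_s$; its quadratic variation $\langle M\rangle_T=\int_0^T|(K_H^{-1}\varphi)(s)|^2\,ds\le C(H,d,T,M)$ is deterministically bounded by the estimate above, so Novikov's condition holds and $\mathcal{E}(pM)$ is a genuine martingale with $\tilde E[\mathcal{E}(pM)_T]=1$. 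Writing
\begin{equation*}
\mathcal{E}(M)_T^{\,p}=\mathcal{E}(pM)_T\,\exp\!\Big(\tfrac{p^2-p}{2}\langle M\rangle_T\Big)\le \mathcal{E}(pM)_T\,\exp\!\Big(\tfrac{p^2-p}{2}C(H,d,T,M)\Big)
\end{equation*}
and taking expectations gives $\tilde E[\mathcal{E}(M)_T^{\,p}]\le \exp(\tfrac{p^2-p}{2}C(H,d,T,M))$, once more a continuous function of $\|b\|_{L_\infty^\infty}$.

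I expect the only genuine obstacle to be the verification that the fractional-integral representation of $K_H^{-1}$ is legitimately applicable to $\varphi$ — that is, that $\varphi$ is absolutely continuous and lies in the correct fractional Sobolev image $I_{0+}^{H+\frac12}(L^2)$ — which here is guaranteed precisely because $b$ is bounded, and the subsequent Beta-function algebra. Once the deterministic bound on $\langle M\rangle_T$ is secured, the remaining steps are entirely standard.
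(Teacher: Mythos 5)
Your proof is correct, and it is essentially the same argument that underlies the paper's own treatment: the paper does not prove this lemma itself but defers to \cite[Lemma 4.3]{BNP}, where the claim is obtained by the same standard route you follow --- the representation $(K_H^{-1}\varphi)(s)=s^{H-\frac{1}{2}}I_{0^{+}}^{\frac{1}{2}-H}\,s^{\frac{1}{2}-H}\varphi^{\prime }(s)$ for absolutely continuous $\varphi$, the Beta-integral evaluation giving the deterministic bound $\left\vert (K_H^{-1}\varphi)(s)\right\vert \leq c_{H,d}\Vert b\Vert _{L_{\infty }^{\infty }}\,s^{\frac{1}{2}-H}$ (valid here since the bounded drift makes $\int_0^{\cdot}b(r,\tilde B_r^H)dr$ Lipschitz, hence in $I_{0+}^{H+\frac{1}{2}}(L^{2})$), and then Novikov's condition together with the identity $\mathcal{E}(M)_{T}^{\,p}=\mathcal{E}(pM)_{T}\exp \{\frac{p^{2}-p}{2}\langle M\rangle _{T}\}$. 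One cosmetic point: for $k<0$ the exponent is bounded above by $0$ rather than by $kC$, so your final bound in the first assertion should read $\exp \{\max (k,0)\,C\}$; this changes nothing of substance.
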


\bigskip

In this paper, we will also make use of an integration by parts formula for
iterated integrals based on \emph{shuffle permutations}. For this purpose,
let $m$ and $n$ be integers. Denote by $S(m,n)$ the set of shuffle
permutations, i.e. the set of permutations $\sigma :\{1,\dots
,m+n\}\rightarrow \{1,\dots ,m+n\}$ such that $\sigma (1)<\dots <\sigma (m)$
and $\sigma (m+1)<\dots <\sigma (m+n)$.

Introduce the $m$-dimensional simplex for $0\leq \theta <t\leq T$, 
\begin{equation*}
\Delta _{\theta ,t}^{m}:=\{(s_{m},\dots ,s_{1})\in \lbrack 0,T]^{m}:\,\theta
<s_{m}<\cdots <s_{1}<t\}.
\end{equation*}%
The product of two simplices can be represented as follows 
\begin{equation*}
\Delta _{\theta ,t}^{m}\times \Delta _{\theta ,t}^{n}=%
\mbox{\footnotesize
$\bigcup_{\sigma \in S(m,n)} \{(w_{m+n},\dots,w_1)\in [0,T]^{m+n} : \,
\theta< w_{\sigma(m+n)} <\cdots < w_{\sigma(1)} <t\} \cup \mathcal{N}$
\normalsize},
\end{equation*}%
where the set $\mathcal{N}$ has null Lebesgue measure. So, if $%
f_{i}:[0,T]\rightarrow \mathbb{R}$, $i=1,\dots ,m+n$ are integrable
functions we get that 
\begin{align}
\int_{\Delta _{\theta ,t}^{m}}\prod_{j=1}^{m}f_{j}(s_{j})ds_{m}\dots ds_{1}&
\int_{\Delta _{\theta ,t}^{n}}\prod_{j=m+1}^{m+n}f_{j}(s_{j})ds_{m+n}\dots
ds_{m+1}  \notag \\
& =\sum_{\sigma \in S(m,n)}\int_{\Delta _{\theta
,t}^{m+n}}\prod_{j=1}^{m+n}f_{\sigma (j)}(w_{j})dw_{m+n}\cdots dw_{1}.
\label{shuffleIntegral}
\end{align}

A generalization of the latter relation is the following (see \cite{BNP}):

\begin{lemma}
\label{partialshuffle} Let $n,$ $p$ and $k$ be non-negative integers, $k\leq
n$. Suppose we have integrable functions $f_{j}:[0,T]\rightarrow \mathbb{R}$%
, $j=1,\dots ,n$ and $g_{i}:[0,T]\rightarrow \mathbb{R}$, $i=1,\dots ,p$. We
may then write 
\begin{align*}
& \int_{\Delta _{\theta ,t}^{n}}f_{1}(s_{1})\dots f_{k}(s_{k})\int_{\Delta
_{\theta ,s_{k}}^{p}}g_{1}(r_{1})\dots g_{p}(r_{p})dr_{p}\dots
dr_{1}f_{k+1}(s_{k+1})\dots f_{n}(s_{n})ds_{n}\dots ds_{1} \\
& =\sum_{\sigma \in A_{n,p}}\int_{\Delta _{\theta ,t}^{n+p}}h_{1}^{\sigma
}(w_{1})\dots h_{n+p}^{\sigma }(w_{n+p})dw_{n+p}\dots dw_{1},
\end{align*}%
where $h_{l}^{\sigma }\in \{f_{j},g_{i}:1\leq j\leq n,1\leq i\leq p\}$.
Above $A_{n,p}$ stands for a subset of permutations of $\{1,\dots ,n+p\}$
such that $\#A_{n,p}\leq C^{n+p}$ for an appropriate constant $C\geq 1$.
Here $s_{0}:=\theta $.
\end{lemma}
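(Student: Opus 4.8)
The plan is to reduce the assertion to the elementary two–simplex shuffle identity (\ref{shuffleIntegral}) by conditioning on the "pivot" variable $s_{k}$ and applying that identity on the subinterval $(\theta ,s_{k})$. First I would split the outer integration over $\Delta _{\theta ,t}^{n}$ into an \emph{upper} block of variables $s_{1},\dots ,s_{k}$ and a \emph{lower} block $s_{k+1},\dots ,s_{n}$. Holding $s_{1}>\cdots >s_{k}$ fixed, the constraint defining $\Delta _{\theta ,t}^{n}$ forces $s_{k+1}>\cdots >s_{n}$ to lie in $(\theta ,s_{k})$, i.e. over $\Delta _{\theta ,s_{k}}^{n-k}$, while by construction the inner variables $r_{1},\dots ,r_{p}$ range over $\Delta _{\theta ,s_{k}}^{p}$. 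Since the integrand factorises, for fixed $s_{k}$ the contribution of the lower variables is a product of two independent simplex integrals over the common interval $(\theta ,s_{k})$:
\[
\left( \int_{\Delta _{\theta ,s_{k}}^{n-k}}\prod_{j=k+1}^{n}f_{j}(s_{j})\,ds_{n}\cdots ds_{k+1}\right) \left( \int_{\Delta _{\theta ,s_{k}}^{p}}\prod_{i=1}^{p}g_{i}(r_{i})\,dr_{p}\cdots dr_{1}\right) .
\]

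Next I would apply (\ref{shuffleIntegral}) verbatim to this product, with the interval $(\theta ,t)$ there replaced by $(\theta ,s_{k})$ and with $m=n-k$ (functions $f_{k+1},\dots ,f_{n}$) and the second count equal to $p$ (functions $g_{1},\dots ,g_{p}$). This rewrites the displayed product as $\sum_{\sigma \in S(n-k,p)}\int_{\Delta _{\theta ,s_{k}}^{n-k+p}}\prod_{l}h_{\sigma (l)}(w_{l})\,dw$, where each integrand is a product of the functions $f_{k+1},\dots ,f_{n},g_{1},\dots ,g_{p}$ arranged in the order dictated by the shuffle $\sigma$.

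Finally I would reinsert the upper integration. Because $s_{k}$ is the top endpoint of the $(n-k+p)$-dimensional inner simplex and still obeys $s_{k}<s_{k-1}<\cdots <s_{1}<t$, the nested ordering constraints collapse into a single ordered chain of $n+p$ variables: the upper variables $s_{1},\dots ,s_{k}$ occupy the top $k$ slots (carrying $f_{1},\dots ,f_{k}$ in order) and the shuffled lower variables fill the remaining $n+p-k$ slots. Thus each $\sigma$ contributes exactly one integral over $\Delta _{\theta ,t}^{n+p}$ of a product of functions drawn from $\{f_{j},g_{i}\}$. Identifying $A_{n,p}$ with $S(n-k,p)$ (each permutation extended so as to fix the first $k$ positions) yields the claimed form, and the count $\#A_{n,p}=\binom{n-k+p}{p}\leq 2^{n-k+p}\leq 2^{n+p}$ lets one take $C=2$.

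The main difficulty here is bookkeeping rather than analysis: one must check carefully that conditioning on $s_{k}$ genuinely decouples the lower $s$-variables and the $r$-variables onto the \emph{same} interval $(\theta ,s_{k})$, so that (\ref{shuffleIntegral}) is directly applicable, and that after shuffling the reassembly with the frozen top block produces honest simplex integrals over $\Delta _{\theta ,t}^{n+p}$ with the integrands in the asserted positions. The boundary cases deserve a word: when $k=n$ the lower $f$-block is empty, $S(0,p)$ is a singleton, and the $g$'s simply sit below all the $f$'s; when $k=0$ the convention $s_{0}:=\theta$ degenerates the inner simplex, and both sides vanish (for $p\geq 1$), so the identity holds trivially.
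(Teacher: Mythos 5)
Your proof is correct. There is nothing in the paper to compare it against line by line: the authors state Lemma \ref{partialshuffle} with a pointer to \cite{BNP} and give no proof, so your argument fills a genuine gap. Structurally, your route is a direct, one-shot reduction: freeze the top block $s_{1}>\dots >s_{k}$, note via Fubini that for fixed $s_{k}$ the remaining $f$-variables and the $g$-variables decouple into two independent simplex integrals over the common interval $(\theta ,s_{k})$, apply the two-simplex shuffle identity (\ref{shuffleIntegral}) once on that interval, and reattach the frozen block, so that the chain $\theta <w_{n-k+p}<\dots <w_{1}<s_{k}<\dots <s_{1}<t$ is exactly $\Delta _{\theta ,t}^{n+p}$. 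This differs from the induction-on-$n$ peeling argument that the paper itself uses for the companion Lemma \ref{OrderDerivatives} (and which is the style of proof behind the citation \cite{BNP}): there one splits off $s_{1}$, treats $k=0,1$ by hand, and invokes the induction hypothesis for $k\geq 2$. Your approach actually buys more than the statement asks for: it identifies $A_{n,p}$ concretely as the copy of $S(n-k,p)$ acting on the bottom $n-k+p$ slots, extended by the identity on the top $k$ slots, and it gives the exact count $\#A_{n,p}=\binom{n-k+p}{p}$, hence the explicit constant $C=2$, whereas the inductive scheme only produces an unspecified bound $C^{n+p}$. Two minor points worth making explicit in a final write-up: the Fubini step is justified by Tonelli, since $\prod_{j}\left\vert f_{j}\right\vert \cdot \prod_{i}\left\vert g_{i}\right\vert$ is a product of $L^{1}$ functions of separate variables and is therefore integrable on $[0,T]^{n+p}$; and in the degenerate case $k=0$, $p\geq 1$, the left-hand side vanishes because $\Delta _{\theta ,\theta }^{p}$ is Lebesgue-null, so one simply takes $A_{n,p}=\emptyset$, which is legitimate since the lemma is an existence statement.
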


\bigskip

The proof of Lemma \ref{SecondBoundedDerivative} relies on an important
estimate (see e.g. Proposition 3.3 in \cite{BLPP} for a new proof (2018)).
In order to state this result, we need some notation. Let $m$ be an integer
and let $f:[0,T]^{m}\times (\mathbb{R}^{d})^{m}\rightarrow \mathbb{R}$ be a
function of the form 
\begin{equation}
f(s,z)=\prod_{j=1}^{m}f_{j}(s_{j},z_{j}),\quad s=(s_{1},\dots ,s_{m})\in
\lbrack 0,T]^{m},\quad z=(z_{1},\dots ,z_{m})\in (\mathbb{R}^{d})^{m},
\label{ffnew}
\end{equation}%
where $f_{j}:[0,T]\times \mathbb{R}^{d}\rightarrow \mathbb{R}$, $j=1,\dots
,m $ are smooth functions with compact support. In addition, let $\varkappa
:[0,T]^{m}\rightarrow \mathbb{R}$ be a function of the form 
\begin{equation}
\varkappa (s)=\prod_{j=1}^{m}\varkappa _{j}(s_{j}),\quad s\in \lbrack
0,T]^{m},  \label{kappa}
\end{equation}%
where $\varkappa _{j}:[0,T]\rightarrow \mathbb{R}$, $j=1,\dots ,m$ are
integrable functions.

Further, denote by $\alpha _{j}$ a multi-index and $D^{\alpha _{j}}$ its
corresponding differential operator. For $\alpha =(\alpha _{1},\dots ,\alpha
_{m})$ as an element of $\mathbb{N}_{0}^{d\times m}$ with $|\alpha
|:=\sum_{j=1}^{m}\sum_{l=1}^{d}\alpha _{j}^{(l)}$, we write 
\begin{equation*}
D^{\alpha }f(s,z)=\prod_{j=1}^{m}D^{\alpha _{j}}f_{j}(s_{j},z_{j}).
\end{equation*}

\begin{theorem}
\label{mainestimate2} Let $B^{H},H\in (0,1/2)$ be a standard $d-$dimensional
fractional Brownian motion and functions $f$ and $\varkappa $ as in (\ref%
{ffnew}), respectively as in (\ref{kappa}). Let $\theta ,t\in \lbrack 0,T]$
with $\theta <t$ and%
\begin{equation*}
\varkappa _{j}(s)=(K_{H}(s,\theta ))^{\varepsilon _{j}},\theta <s<t
\end{equation*}%
for every $j=1,...,m$ with $(\varepsilon _{1},...,\varepsilon _{m})\in
\{0,1\}^{m}.$ Let $\alpha \in (\mathbb{N}_{0}^{d})^{m}$ be a multi-index. If 
\begin{equation*}
H<\frac{\frac{1}{2}-\gamma }{(d-1+2\sum_{l=1}^{d}\alpha _{j}^{(l)})}
\end{equation*}%
for all $j$, where $\gamma \in (0,H)$ is sufficiently small, then there
exists a universal constant $C$ (depending on $H$, $T$ and $d$, but
independent of $m$, $\{f_{i}\}_{i=1,...,m}$ and $\alpha $) such that for any 
$\theta ,t\in \lbrack 0,T]$ with $\theta <t$ we have%
\begin{eqnarray*}
&&\left\vert E\int_{\Delta _{\theta ,t}^{m}}\left( \prod_{j=1}^{m}D^{\alpha
_{j}}f_{j}(s_{j},B_{s_{j}}^{H})\varkappa _{j}(s_{j})\right) ds\right\vert \\
&\leq &C^{m+\left\vert \alpha \right\vert }\prod_{j=1}^{m}\left\Vert
f_{j}(\cdot ,z_{j})\right\Vert _{L^{1}(\mathbb{R}^{d};L^{\infty
}([0,T]))}\theta ^{(H-\frac{1}{2})\sum_{j=1}^{m}\varepsilon _{j}} \\
&&\times \frac{(\prod_{l=1}^{d}(2\left\vert \alpha ^{(l)}\right\vert
)!)^{1/4}(t-\theta )^{-H(md+2\left\vert \alpha \right\vert )-(H-\frac{1}{2}%
-\gamma )\sum_{j=1}^{m}\varepsilon _{j}+m}}{\Gamma (-H(2md+4\left\vert
\alpha \right\vert )+2(H-\frac{1}{2}-\gamma )\sum_{j=1}^{m}\varepsilon
_{j}+2m)^{1/2}}.
\end{eqnarray*}
\end{theorem}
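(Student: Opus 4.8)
The plan is to reduce the estimate to a deterministic integral over the simplex $\Delta_{\theta,t}^m$ by first controlling, for each fixed ordered tuple $\theta<s_m<\cdots<s_1<t$, the Gaussian expectation $E[\prod_{j=1}^m D^{\alpha_j}f_j(s_j,B_{s_j}^H)]$, and then integrating the resulting pointwise bound against the deterministic kernels $\varkappa_j$ (which, being non-random, pass through the expectation by Fubini). First I would pass to Fourier variables, writing $f_j(s_j,\cdot)=\int_{\mathbb{R}^d}\widehat{f_j}(s_j,\xi_j)e^{i\langle\xi_j,\cdot\rangle}d\xi_j$, so that $D^{\alpha_j}f_j$ produces a factor $(i\xi_j)^{\alpha_j}$ and the expectation collapses to the joint characteristic function $E[\exp(i\sum_j\langle\xi_j,B_{s_j}^H\rangle)]=\exp(-\tfrac12\sum_{l=1}^d(\xi^{(l)})^{\top}\Sigma\,\xi^{(l)})$, where $\Sigma$ is the $m\times m$ covariance matrix of the scalar fBm sampled at $s_1,\dots,s_m$. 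Bounding $|\widehat{f_j}(s_j,\xi_j)|\le(2\pi)^{-d}\|f_j\|_{L^1(\mathbb{R}^d;L^\infty([0,T]))}$ uniformly in $\xi_j$ is exactly what brings in the $L^1$ norms on the right-hand side; this is the only place the spatial integrability of the $f_j$ is used.

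The crux is the second step: estimating the remaining Gaussian integral $\int\prod_{j,l}|\xi_j^{(l)}|^{\alpha_j^{(l)}}\exp(-\tfrac12\sum_l(\xi^{(l)})^{\top}\Sigma\,\xi^{(l)})\,d\xi$. Here I would invoke the \emph{strong local nondeterminism} of fractional Brownian motion, a standard Gaussian estimate providing a constant $c_H>0$ with $(\xi^{(l)})^{\top}\Sigma\,\xi^{(l)}=\mathrm{Var}(\sum_j\xi_j^{(l)}B_{s_j}^H)\ge c_H\sum_j(\xi_j^{(l)})^2\lambda_j^{2H}$, where $\lambda_j$ denotes the relevant consecutive time-gap. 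This lower bound is what makes the non-Markovian, non-semimartingale fBm tractable: it diagonalises the quadratic form, so the integral factorises into one-dimensional moment integrals $\int_{\mathbb{R}}|w|^{a}e^{-c_Hw^2\lambda_j^{2H}/2}dw\simeq\lambda_j^{-H(a+1)}\Gamma(\tfrac{a+1}{2})$. Collecting these yields a pointwise bound of the shape $C^{m+|\alpha|}(\prod_l(2|\alpha^{(l)}|)!)^{1/4}\prod_j\|f_j\|\cdot\prod_j\lambda_j^{-H(\cdots)}$, the factorial growth being the aggregate of the $\Gamma$-factors, with the quartic root and the doubling $2|\alpha^{(l)}|$ reflecting a Cauchy--Schwarz symmetrisation used to balance the two adjacent gaps attached to each variable.

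Finally I would feed this pointwise bound into the time integral. The factors $\varkappa_j(s_j)=K_H(s_j,\theta)^{\varepsilon_j}$ are handled by the explicit form (\ref{KH}) of the kernel, which near the base point behaves like a power of $s_j-\theta$ and produces the prefactor $\theta^{(H-\frac12)\sum_j\varepsilon_j}$. What remains is a Dirichlet-type integral of a product of negative powers of the gaps over $\Delta_{\theta,t}^m$; evaluating it (after the Cauchy--Schwarz step over the simplex, which is responsible for the doubled arguments $2md$, $4|\alpha|$, $2m$ and for the square root on the $\Gamma$ in the denominator) gives precisely the power of $(t-\theta)$ and the factor $\Gamma(\cdots)^{1/2}$ displayed in the statement. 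The hypothesis $H<(\tfrac12-\gamma)/(d-1+2\sum_l\alpha_j^{(l)})$, imposed for the largest $|\alpha_j|$, is exactly the condition keeping every gap-exponent below the critical threshold $\tfrac12$, so that each Beta/$\Gamma$ factor is finite and the argument of the $\Gamma$ in the denominator remains positive; the small $\gamma>0$ keeps the estimate away from the borderline pole.

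The main obstacle is twofold. The analytic heart is the strong local nondeterminism bound with an explicit, $m$-independent constant $c_H$, together with a scrupulous tracking of every constant so that the final $C$ is genuinely universal: independent of $m$, of the family $\{f_j\}$, and of $\alpha$. This uniformity is indispensable, since the theorem is afterwards summed over $m$ (see the proof of Lemma \ref{SecondBoundedDerivative}, where it is applied with $m=2^q(m_1+m_2)$), so any constant growing super-geometrically in $m$ would destroy the convergence of the series. Landing the combinatorial factor exactly on $(\prod_l(2|\alpha^{(l)}|)!)^{1/4}$ and matching the doubled $\Gamma$-argument requires placing the Cauchy--Schwarz symmetrisation carefully relative to the Dirichlet integral; this bookkeeping, rather than any single inequality, is the delicate part.
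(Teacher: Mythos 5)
Your outline follows essentially the same route as the proof the paper relies on: the paper itself does not prove Theorem \ref{mainestimate2} but imports it from \cite{BLPP} (Proposition 3.3; the method originates in \cite{BNP}), and that argument proceeds exactly as you describe — Fourier inversion in the space variable (which is where the $L^{1}(\mathbb{R}^{d};L^{\infty}([0,T]))$-norms enter), reduction to the joint characteristic function, strong local nondeterminism of fractional Brownian motion to lower-bound and diagonalise the covariance quadratic form, Gaussian moment/Wick-type bounds producing the factorial factor, and a Cauchy--Schwarz symmetrisation fed into a Dirichlet-type simplex integral yielding the doubled $\Gamma$-argument, its square root, and the stated power of $(t-\theta)$. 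Your sketch identifies all of these ingredients, including the two points that genuinely matter (an $m$-independent local-nondeterminism constant and the placement of the Cauchy--Schwarz step so as to land on the stated combinatorial factors), so it is a faithful blueprint of the cited proof rather than a different argument.
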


\begin{remark}
\label{S}The above theorem remains valid for time-homogeneous functions $%
\{f_{i}\}_{i=1,...,m}$ in the Schwartz function space.
\end{remark}

\bigskip

The proof of Lemma \ref{SecondBoundedDerivative} also requires the following
auxiliary result:

\begin{lemma}
\label{OrderDerivatives}Let $n,$ $p$ and $k$ be non-negative integers, $%
k\leq n$. Assume we have functions $f_{j}:[0,T]\rightarrow \mathbb{R}$, $%
j=1,\dots ,n$ and $g_{i}:[0,T]\rightarrow \mathbb{R}$, $i=1,\dots ,p$ such
that 
\begin{equation*}
f_{j}\in \left\{ \frac{\partial ^{\alpha _{j}^{(1)}+...+\alpha _{j}^{(d)}}}{%
\partial ^{\alpha _{j}^{(1)}}x_{1}...\partial ^{\alpha _{j}^{(d)}}x_{d}}%
b^{(r)}(u,X_{u}^{x}),\text{ }r=1,...,d\right\} ,\text{ }j=1,...,n
\end{equation*}%
and 
\begin{equation*}
g_{i}\in \left\{ \frac{\partial ^{\beta _{i}^{(1)}+...+\beta _{i}^{(d)}}}{%
\partial ^{\beta _{i}^{(1)}}x_{1}...\partial ^{\beta _{i}^{(d)}}x_{d}}%
b^{(r)}(u,X_{u}^{x}),\text{ }r=1,...,d\right\} ,\text{ }i=1,...,p
\end{equation*}%
for $\alpha :=(\alpha _{j}^{(l)})\in \mathbb{N}_{0}^{d\times n}$ and $\beta
:=(\beta _{i}^{(l)})\in \mathbb{N}_{0}^{d\times p},$ where $X_{\cdot }^{x}$
is the strong solution to 
\begin{equation*}
X_{t}^{x}=x+\int_{0}^{t}b(u,X_{u}^{x})du+B_{t}^{H},\text{ }0\leq t\leq T
\end{equation*}%
for $b=(b^{(1)},...,b^{(d)})$ with $b^{(r)}\in C_{c}((0,T)\times \mathbb{R}%
^{d})$ for all $r=1,...,d$. So (as we shall say in the sequel) the product $%
g_{1}(r_{1})\cdot \dots \cdot g_{p}(r_{p})$ has a total order of derivatives 
$\left\vert \beta \right\vert =\sum_{l=1}^{d}\sum_{i=1}^{p}\beta _{i}^{(l)}$%
. We know from Lemma \ref{partialshuffle} that 
\begin{align}
& \int_{\Delta _{\theta ,t}^{n}}f_{1}(s_{1})\dots f_{k}(s_{k})\int_{\Delta
_{\theta ,s_{k}}^{p}}g_{1}(r_{1})\dots g_{p}(r_{p})dr_{p}\dots
dr_{1}f_{k+1}(s_{k+1})\dots f_{n}(s_{n})ds_{n}\dots ds_{1}  \notag \\
& =\sum_{\sigma \in A_{n,p}}\int_{\Delta _{\theta ,t}^{n+p}}h_{1}^{\sigma
}(w_{1})\dots h_{n+p}^{\sigma }(w_{n+p})dw_{n+p}\dots dw_{1},  \label{h}
\end{align}%
where $h_{l}^{\sigma }\in \{f_{j},g_{i}:1\leq j\leq n,$ $1\leq i\leq p\}$, $%
A_{n,p}$ is a subset of permutations of $\{1,\dots ,n+p\}$ such that $%
\#A_{n,p}\leq C^{n+p}$ for an appropriate constant $C\geq 1$, and $%
s_{0}=\theta $. Then the products%
\begin{equation*}
h_{1}^{\sigma }(w_{1})\cdot \dots \cdot h_{n+p}^{\sigma }(w_{n+p})
\end{equation*}%
have a total order of derivatives given by $\left\vert \alpha \right\vert
+\left\vert \beta \right\vert .$
\end{lemma}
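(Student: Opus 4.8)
The plan is to recognise that Lemma \ref{OrderDerivatives} is purely a bookkeeping statement: the identity (\ref{h}) supplied by Lemma \ref{partialshuffle} only \emph{rearranges} the integrand factors, while the total order of derivatives is an additive functional of those factors that does not see their ordering. Consequently no analytic estimate is required, and it suffices to track which functions appear in each summand on the right-hand side of (\ref{h}).

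First I would examine the structure of the sum in (\ref{h}). By the construction behind Lemma \ref{partialshuffle}, the set $A_{n,p}$ consists of genuine permutations $\sigma$ of $\{1,\dots,n+p\}$, and each factor $h_l^{\sigma}$ is the relabeling, according to $\sigma$, of one of the original functions $f_1,\dots,f_n,g_1,\dots,g_p$. Since $\sigma$ is a bijection, each of these $n+p$ functions occurs \emph{exactly once} among $h_1^{\sigma},\dots,h_{n+p}^{\sigma}$; in other words, for every $\sigma\in A_{n,p}$ the multiset $\{h_1^{\sigma},\dots,h_{n+p}^{\sigma}\}$ coincides with $\{f_1,\dots,f_n,g_1,\dots,g_p\}$. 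This is the one point that must be checked directly from the iterated integration-by-parts relation (\ref{shuffleIntegral}) underlying Lemma \ref{partialshuffle}: when the inner simplex carrying $g_1,\dots,g_p$ is merged with the outer simplex carrying $f_1,\dots,f_n$, each factor is carried over intact, none being duplicated or lost.

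Next I would invoke the definitions. Each $f_j$ is a partial derivative of a component of $b$ of total order $\sum_{l=1}^{d}\alpha_{j}^{(l)}$, and each $g_i$ of total order $\sum_{l=1}^{d}\beta_{i}^{(l)}$; the total order of derivatives of a product is, by definition, the sum of the orders of its factors, and this sum is manifestly invariant under any permutation of the factors. Combining this with the bijection from the previous step, the total order of derivatives of each product $h_1^{\sigma}(w_1)\cdots h_{n+p}^{\sigma}(w_{n+p})$ equals
\begin{equation*}
\sum_{j=1}^{n}\sum_{l=1}^{d}\alpha_{j}^{(l)}+\sum_{i=1}^{p}\sum_{l=1}^{d}\beta_{i}^{(l)}=\left\vert\alpha\right\vert+\left\vert\beta\right\vert,
\end{equation*}
where the last equality is just the definitions $\left\vert\alpha\right\vert=\sum_{j,l}\alpha_{j}^{(l)}$ and $\left\vert\beta\right\vert=\sum_{i,l}\beta_{i}^{(l)}$.

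Since this holds uniformly for every $\sigma\in A_{n,p}$, each summand on the right-hand side of (\ref{h}) is a product whose total order of derivatives is $\left\vert\alpha\right\vert+\left\vert\beta\right\vert$, which is exactly the claim. The only genuine obstacle is the combinatorial bookkeeping in the first step, namely verifying that the partial-shuffle rearrangement of Lemma \ref{partialshuffle} preserves the collection of factors with each appearing exactly once; granting that, additivity of the derivative count yields the result immediately.
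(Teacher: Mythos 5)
Your proposal is correct in substance and it isolates exactly the right two ingredients: (i) each summand on the right-hand side of (\ref{h}) contains every one of the original factors $f_{1},\dots,f_{n},g_{1},\dots,g_{p}$ exactly once, and (ii) the total order of derivatives is additive over factors and blind to their ordering. Given (i), the conclusion is immediate, as you say. The paper, however, does not argue this way: it proves the lemma by induction on $n$, mirroring the inductive structure behind Lemma \ref{partialshuffle} itself. The base case $n=1$, $k=1$ is the explicit merge of the two simplices into $\Delta_{\theta,t}^{p+1}$ with integrand $f_{1}(w_{1})g_{1}(w_{2})\cdots g_{p}(w_{p+1})$; in the inductive step the outermost variable $s_{1}$ is peeled off, the shuffle identity (\ref{shuffleIntegral}) (respectively the induction hypothesis for $k\geq 2$) is applied to the inner integrals, and the derivative count $\sum_{l}\sum_{j\geq 2}\alpha_{j}^{(l)}+|\beta|$ is tracked and then augmented by $\sum_{l}\alpha_{1}^{(l)}$. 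The reason the paper goes through this trouble is precisely the point you flag as ``the one point that must be checked directly'': the statement of Lemma \ref{partialshuffle} only asserts $h_{l}^{\sigma}\in\{f_{j},g_{i}\}$ and that $A_{n,p}$ is a set of permutations; it does not, as stated, assert that $l\mapsto h_{l}^{\sigma}$ enumerates the original factors without repetition or omission. Since Lemma \ref{partialshuffle} is quoted from an external reference without proof, your appeal to ``the construction behind'' it is not something you can cite; it has to be established, and the paper's induction is exactly that verification, carried out with the derivative bookkeeping attached.

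So the comparison is this: your route is cleaner and makes the conceptual content transparent (multiset preservation plus additivity), and it would be a complete proof if the ``exactly once'' property were part of the cited lemma or were proven separately; the paper's route is less illuminating but self-contained, because the induction simultaneously re-derives the needed structural property of the shuffle decomposition and counts derivatives along the way. To turn your proposal into a finished proof you would either strengthen the statement you cite (proving that in (\ref{h}) each summand is $\prod_{l}F_{\sigma(l)}(w_{l})$ for a genuine bijection $\sigma$, with $\{F_{1},\dots,F_{n+p}\}=\{f_{1},\dots,f_{n},g_{1},\dots,g_{p}\}$), which again requires an induction of the same shape as the paper's, or simply adopt the paper's inductive argument. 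The gap is localized and fillable, but what you call ``the only genuine obstacle'' is in fact the entire content of the paper's proof.
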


\begin{proof}
The result is proved by induction on $n$. For $n=1$ and $k=0$ the result is
trivial. For $k=1$ we have 
\begin{eqnarray*}
\int_{\theta }^{t}f_{1}(s_{1})\int_{\Delta _{\theta
,s_{1}}^{p}}g_{1}(r_{1})\dots g_{p}(r_{p}) &&dr_{p}\dots dr_{1}ds_{1} \\
&=&\int_{\Delta _{\theta ,t}^{p+1}}f_{1}(w_{1})g_{1}(w_{2})\dots
g_{p}(w_{p+1})dw_{p+1}\dots dw_{1},
\end{eqnarray*}%
where we have put $w_{1}=s_{1},$ $w_{2}=r_{1},\dots ,w_{p+1}=r_{p}$. Hence
the total order of derivatives involved in the product of the last integral
is given by $\sum_{l=1}^{d}\alpha
_{1}^{(l)}+\sum_{l=1}^{d}\sum_{i=1}^{p}\beta _{i}^{(l)}=\left\vert \alpha
\right\vert +\left\vert \beta \right\vert .$

Assume the result holds for $n$ and let us show that this implies that the
result is true for $n+1$. Either $k=0,1$ or $2\leq k\leq n+1$. For $k=0$ the
result is trivial. For $k=1$ we have 
\begin{align*}
\int_{\Delta _{\theta ,t}^{n+1}}& f_{1}(s_{1})\int_{\Delta _{\theta
,s_{1}}^{p}}g_{1}(r_{1})\dots g_{p}(r_{p})dr_{p}\dots
dr_{1}f_{2}(s_{2})\dots f_{n+1}(s_{n+1})ds_{n+1}\dots ds_{1} \\
& =\int_{\theta }^{t}f_{1}(s_{1})\left( \int_{\Delta _{\theta
,s_{1}}^{n}}\int_{\Delta _{\theta ,s_{1}}^{p}}g_{1}(r_{1})\dots
g_{p}(r_{p})dr_{p}\dots dr_{1}f_{2}(s_{2})\dots
f_{n+1}(s_{n+1})ds_{n+1}\dots ds_{2}\right) ds_{1}.
\end{align*}%
From (\ref{shuffleIntegral}) we observe by using the shuffle permutations
that the latter inner double integral on diagonals can be written as a sum
of integrals on diagonals of length $p+n$ with products having a total order
of derivatives given by $\sum_{l=1}\sum_{j=2}^{n+1}\alpha
_{j}^{(l)}+\sum_{l=1}^{d}\sum_{i=1}^{p}\beta _{i}^{(l)}$. Hence we obtain a
sum of products, whose total order of derivatives is $\sum_{l=1}^{d}%
\sum_{j=2}^{n+1}\alpha _{j}^{(l)}+\sum_{l=1}^{d}\sum_{i=1}^{p}\beta
_{i}^{(l)}+\sum_{l=1}^{d}\alpha _{1}^{(l)}=\left\vert \alpha \right\vert
+\left\vert \beta \right\vert .$

For $k\geq 2$ we have (in connection with Lemma \ref{partialshuffle}) from
the induction hypothesis that 
\begin{align*}
\int_{\Delta _{\theta ,t}^{n+1}}f_{1}(s_{1})\dots f_{k}(s_{k})\int_{\Delta
_{\theta ,s_{k}}^{p}}g_{1}(r_{1})\dots g_{p}(r_{p})& dr_{p}\dots
dr_{1}f_{k+1}(s_{k+1})\dots f_{n+1}(s_{n+1})ds_{n+1}\dots ds_{1} \\
=\int_{\theta }^{t}f_{1}(s_{1})\int_{\Delta _{\theta
,s_{1}}^{n}}f_{2}(s_{2})\dots f_{k}(s_{k})& \int_{\Delta _{\theta
,s_{k}}^{p}}g_{1}(r_{1})\dots g_{p}(r_{p})dr_{p}\dots dr_{1} \\
& \times f_{k+1}(s_{k+1})\dots f_{n+1}(s_{n+1})ds_{n+1}\dots ds_{2}ds_{1} \\
=\sum_{\sigma \in A_{n,p}}\int_{\theta }^{t}f_{1}(s_{1})\int_{\Delta
_{\theta ,s_{1}}^{n+p}}& h_{1}^{\sigma }(w_{1})\dots h_{n+p}^{\sigma
}(w_{n+p})dw_{n+p}\dots dw_{1}ds_{1},
\end{align*}%
where each of the products $h_{1}^{\sigma }(w_{1})\cdot \dots \cdot
h_{n+p}^{\sigma }(w_{n+p})$ have a total order of derivatives given by $%
\sum_{l=1}\sum_{j=2}^{n+1}\alpha
_{j}^{(l)}+\sum_{l=1}^{d}\sum_{i=1}^{p}\beta _{i}^{(l)}.$ Thus we get a sum
with respect to a set of permutations $A_{n+1,p}$ with products having a
total order of derivatives which is%
\begin{equation*}
\sum_{l=1}^{d}\sum_{j=2}^{n+1}\alpha
_{j}^{(l)}+\sum_{l=1}^{d}\sum_{i=1}^{p}\beta _{i}^{(l)}+\sum_{l=1}^{d}\alpha
_{1}^{(l)}=\left\vert \alpha \right\vert +\left\vert \beta \right\vert .
\end{equation*}
\end{proof}

\bigskip

\bigskip

\end{document}